




%
%
%
%
\documentclass[cmp,final,envcountsect,envcountsame]{svjour}
%
%
\usepackage{amsmath,amssymb,amsfonts,tikz}
\usepackage[numbers,comma,square,sort&compress]{natbib}
\RequirePackage{color}
\RequirePackage[colorlinks,urlcolor=my-blue,linkcolor=my-blue,citecolor=my-blue]{hyperref}
\definecolor{my-blue}{rgb}{0.0,0.0,0.85}
\definecolor{my-red}{rgb}{0.5,0.0,0.0}
\definecolor{my-green}{rgb}{0.0,0.5,0.0}
\definecolor{nicos-red}{rgb}{0.75,0.0,0.0}

\newenvironment{proofof}[2]{\removelastskip\vspace{6pt}\noindent
 {\it Proof  #1.}~\rm#2}{\par\vspace{6pt}}

\numberwithin{equation}{section}

\newcommand{\be}{\begin{equation}}
\newcommand{\ee}{\end{equation}}
\newcommand{\beq}{\begin{equation}}
\newcommand{\eeq}{\end{equation}}

\newcommand{\nn}{\nonumber}
\providecommand{\abs}[1]{\vert#1\vert}

\newcommand{\fl}[1]{\lfloor{#1}\rfloor}

\def\cC{\mathcal{C}}  

\def\cG{\mathcal{G}}

\def\cM{\mathcal{M}}

\def\cS{\mathcal{S}}
\def\cI{\mathcal{I}}
\def\cK{\mathcal{K}}

\def\cL{\mathcal{L}}
 \def\cV{\mathcal{V}}
\def\xhat{\hat x}
\def\wz{\eta}
\def\esssup{\mathop{\mathrm{ess\,sup}}}
\def\gr{\mathcal G}
\font \mymathbb = bbold10 at 11pt
\def\Hbar{\bar H}

\newcommand{\one}{\mbox{\mymathbb{1}}}

\def\kS{\mathfrak{S}}

\def\bE{\mathbb{E}}
\def\bN{\mathbb{N}}
\def\bP{\mathbb{P}}
\def\bQ{\mathbb{Q}}
\def\bR{\mathbb{R}}
\def\bZ{\mathbb{Z}}

 \def\Z{\bZ} \def\Q{\bQ} \def\R{\bR}\def\N{\bN}

\def\w{\omega}
\def\om{\omega}

\def\e{\varepsilon}

\def\ddd{\displaystyle}

\def\m1{\mathbf{1}}


 \def\wt{\widetilde}   
 

\def\E{\bE}
\def\P{\bP} 



\def\funct lp{L} 
\def\funct lpbar{\bar L} 

\def\range{\mathcal R}
\def\Uset{\mathcal U}


\def\Vw{V_0}   

\def\Zpl{Z}
\def\Zpp{Z}
\def\Gpl{G}
\def\Gpp{G}
\def\gpp{g_{\text{\rm pp}}} \def\bargpp{\bar g_{\text{\rm pp}}}  \def\Lapp{g_{\text{\rm pp}}}
\def\gpl{g_{\text{\rm pl}}}  \def\Lapl{g_{\text{\rm pl}}}
\def\Bpp{B_{\text{\rm pp}}}  
\def\Bpl{B_{\text{\rm pl}}} 

\def\h{{h}}

\def\B{{B}}

\def\cE{{\mathcal E}}

\def\M{{\mathcal M}}

\DeclareMathOperator{\ri}{ri}    
\DeclareMathOperator{\aff}{aff}   

\def\lev{\sigma}  \def\rev{\tau}   
\def\rectan{\Lambda}  
\def\amatri{A}   
\def\bmatri{A} 

\def\ellc{\delta}  

\definecolor{darkgreen}{rgb}{0.0,0.5,0.0}
\definecolor{darkblue}{rgb}{0.0,0.0,0.3}
\definecolor{nicosred}{rgb}{0.65,0.1,0.1}
\definecolor{light-gray}{gray}{0.7}
\allowdisplaybreaks[1]

\begin{document}
\renewcommand\footnotemark{}

\title{Variational formulas and cocycle solutions\\ for  directed polymer and percolation models} 
\titlerunning{Variational formulas for polymers and percolation}
\author{Nicos Georgiou\inst{1} 
\and Firas Rassoul-Agha\inst{2} \and Timo Sepp\"al\"ainen\inst{3}
\thanks{F.\ Rassoul-Agha and N.\ Georgiou were partially supported by National Science Foundation grant DMS-0747758.}
\thanks{F.\ Rassoul-Agha was partially supported by National Science Foundation grant DMS-1407574 and by Simons Foundation grant 306576.}
\thanks{T.\ Sepp\"al\"ainen was partially supported by  National Science Foundation grant  DMS-1306777, by Simons Foundation grant 338287,  and by the Wisconsin Alumni Research Foundation.} 
}                     
\authorrunning{N.~Georgiou \and F.~Rassoul-Agha \and T.~Sepp\"al\"ainen}
\institute{Mathematics, University of Sussex, Falmer Campus, Brighton BN1 9QH, UK.\\ \email{N.Georgiou@sussex.ac.uk}
\and Mathematics, University of Utah,  155S 1400E,   Salt Lake City, UT 84112, USA.\\ \email{firas@math.utah.edu}
\and Mathematics, University of Wisconsin-Madison,  Van Vleck Hall, 480 Lincoln Dr.,\\  Madison WI 53706-1388, USA.\\ \email{seppalai@math.wisc.edu}
}
%
\date{Received: June 20, 2015 / Accepted: January 19, 2016}
%
%
\maketitle
\begin{abstract}
We discuss variational formulas for  the law of large numbers limits of certain models of motion in a random medium: namely, the limiting time constant for last-passage percolation  and  the limiting free energy  for directed polymers.  The results are valid for models in arbitrary dimension,    steps of the admissible paths can be general, the environment process is ergodic under spatial translations, and the potential  accumulated  along a path can depend on the environment and the next step of the path.   The variational formulas come in two types: one minimizes over gradient-like cocycles, and another one maximizes over invariant measures on the space of environments and paths.  Minimizing  cocycles can be obtained from Busemann functions when these can be proved to exist.   The results are illustrated through 1+1 dimensional  exactly solvable examples, periodic examples,  and polymers in weak disorder. 
\end{abstract}


\setcounter{tocdepth}{1}
\tableofcontents

\section{Introduction}

Existence of limit shapes has been foundational for  the study of
growth models and percolation type processes.   These limits are complicated, often coming from   subadditive sequences.
Beyond a handful of exactly solvable models, 
 very little information is available about the limit shapes.  
This article develops and studies  variational formulas for the limiting
free energies of directed random paths in a random medium,  both for positive temperature directed polymer models and for  zero-temperature
 last-passage percolation models.     
Earlier papers \cite{Ras-Sep-14}  and \cite{Ras-Sep-Yil-13} proved  variational formulas for positive temperature directed
polymers,  without addressing solutions of   these formulas.  
Article  \cite{Ras-Sep-Yil-15-} gives simpler proofs of some of the results of  \cite{Ras-Sep-Yil-13}.  

 The present paper continues the project in two directions:  

 (i) We extend the variational  formulas from positive to zero temperature, that is, we derive    
variational formulas for the limiting time constants of directed last-passage percolation models. 

(ii)  We develop an approach  for finding minimizers  for  one type of  variational formula in terms
of cocycles,  for both positive temperature  and zero temperature models.

Our paper, and the concurrent and  independent work of Krishnan  \cite{Kri-14,Kri-15-},  are the first to provide general formulas for the limits of   first- and last-passage percolation models. 

\smallskip 

The variational formulas we present  come in two types.  

\smallskip

 (a) One formula  minimizes over gradient-like cocycle  functions.  In the positive temperature case this formula mimics the commonly known  min-max   formula of  the Perron-Frobenius eigenvalue of a nonnegative matrix.  In the case of a periodic environment this cocycle variational  formula  reduces to the min-max formula from linear algebra.   The origins  of this formula go back to the  PhD thesis of Rosenbluth \cite{Ros-06}.  He adapted homogenization work  \cite{Kos-Rez-Var-06}  to deduce  a 
 formula of this type  for the quenched large deviation rate function for  random walk in
 random environment.  
 
 (b)  The second formula maximizes over invariant measures on the space of 
environments and paths.   The positive temperature version of this formula 
is of the familiar type that gives the dual of entropy as a function of the potential.  
In zero temperature the entropy disappears and only the expected potential
is left, maximized over invariant measures that are absolutely continuous 
with respect to the background measure.     In a periodic environment this zero-temperature formula reduces to the maximal average circuit weight  formula of a max-plus eigenvalue.  
 
 \smallskip 
 
The next example  illustrates the two types of variational  formulas for the   two-dimensional  corner growth model.    The notation and the details are made precise in the sequel.  

\begin{example}\label{ex:corner}
Let $\Omega=\R^{\Z^2}$ be the space of weight configurations $\w=(\w_x)_{x\in\Z^2}$ on the planar integer lattice $\Z^2$, and let $\P$ be an i.i.d.\ product probability measure on $\Omega$. Assume  $\E(\abs{\w_x}^{p})<\infty$ for some $p>2$.  Let $h\in\R^2$ be an external field parameter.   The point-to-line last-passage time is defined by 
\be \Gpl^\infty_{0,(n)}(h) =\max_{x_{0,n}: \,x_0=0}\Bigl\{ \,\sum_{k=0}^{n-1}\w_{x_k} + h\cdot x_n\Bigr\} 
	\label{gpl1.0}\ee
where the maximum is over paths  $x_{0,n}=(x_0,\dotsc, x_n)$ that begin at the origin $x_0=0$ and take   directed 
nearest-neighbor 
steps $x_k-x_{k-1}\in\{e_1, e_2\}$.  There is a law of large numbers  
\be\label{p2lh-lim1.0} 
\gpl^\infty(h)=  \lim_{n\to\infty} n^{-1}\Gpl_{0,(n)}^\infty(h)   \quad  \text{ $\P$-almost surely, simultaneously  $\forall h\in\R^2$.  }
\ee 
This  defines a deterministic convex Lipschitz function $\gpl^\infty:\R^2\to\R$.  (The subscript pl is for point-to-line and the superscript $\infty$ is for zero temperature.)  The results to be described give the following two characterizations of the limit.  

Theorem \ref{th:K-var} gives the cocycle variational  formula 
	\begin{align}
	\gpl^\infty(h)&=\inf_{F} \,\P\text{-}\esssup_\w\;  \max_{i=1,2} \; \bigl\{\w_0+ h\cdot e_i+F(\w, 0,e_i)\bigr\}.\label{eq:g:K-var1.0}
	\end{align}
 The infimum is over centered stationary  cocycles $F$. These are   mean-zero functions $F:\Omega\times(\Z^2)^2\to\R$ that satisfy   additivity $F(\w, x,y)+F(\w, y,z)=F(\w, x,z)$  and stationarity    $F(T_z\w, x,y)=F(\w, z+x,z+y)$ (Definition \ref{def:cK}).   
 
 The second formula is over measures and comes as a special case of Theorem \ref{th:g=Hstar}: 
 	\begin{align} 
	\gpl^\infty(h)&=\sup\bigl\{E^\mu[\w_0+h\cdot z]:\mu\in\cM_s(\Omega\times\{e_1,e_2\}),\,\mu\vert_{\Omega}\ll\P,\, E^\mu[\w_0^-]<\infty\bigr\}.\label{eq:g:H-var1.0} 
	\end{align}
The supremum is over probability measures 	$\mu$ on pairs $(\w, z)\in\Omega\times\{e_1,e_2\}$  that are   invariant in a natural way (described in Proposition  \ref{lm:S}) and whose $\Omega$-marginal is absolutely continuous with respect to the environment distribution $\P$.   $E^\mu$ denotes expectation under $\mu$.

 As we will see, these formulas  are valid quite generally in all dimensions, for general walks, ergodic environments,  and more complicated potentials, provided certain moment assumptions are satisfied.   
\hfill $\triangle$  
\end{example} 
 
 
In addition to deriving the formulas,  we develop a solution approach  for  the
cocycle formula in terms of stationary cocycles suitably  adapted to the potential.   Such cocycles can 
be obtained from limits of   gradients  of free energies and  last-passage times.     These
limits are called {\it Busemann functions}.  Their existence is in general a nontrivial problem.   Along the way we show that, once Busemann functions exist as almost sure limits,  their integrability  follows from the $L^1$ shape theorem which a priori is a much cruder result.

Over the last two decades  Busemann functions  have become  an   important tool  in the study of the   geometry of percolation and invariant distributions of related particle systems.   
 Study of Busemann functions is also motivated by fluctuation questions. One approach to 
 quantifying   fluctuations of 
free energy  and the paths goes through control of  fluctuations of Busemann
functions.   In 1+1 dimension these models are expected to 
lie in the Kardar-Parisi-Zhang (KPZ) universality class and there are well-supported
conjectures for universal fluctuation exponents and limit distributions.  Some of these
conjectures have been verified for a handful of exactly solvable models. 
(See surveys \cite{corw-rev, quas-icm,spoh-12,trac-wido-02}.)    In dimensions
3+1 and higher, high temperature behavior of directed polymers has been proved to be
diffusive \cite{come-yosh-aop-06}, but otherwise conjectures beyond 1+1 dimension
are murky.   

To summarize, the purpose of   this paper is to develop the variational formulas, illustrate them with examples,  and set an  agenda  for future study with the Busemann solution.  We show how the formulas work in weak disorder, in   exactly solvable 1+1 dimensional models, and in periodic  environments.   Applications  that go beyond these  cases cannot be covered within the scope of this paper  and will follow
in future work. 


Minimizing cocycles for 
  \eqref{eq:g:K-var1.0} have been  constructed
for the two-dimensional corner growth model with general i.i.d.\ weights in   \cite{geor-rass-sepp-lppbuse}.    In the sequel  \cite{geor-rass-sepp-lppgeo}  these cocycles are used to construct geodesics and to prove existence, uniqueness and coalescence  properties of directional geodesics and to study the competition interface.    In another direction of work on these formulas, article  \cite{Ras-Sep-Yil-15-}  proves the cocycle variational  formula for the annealed free energy of a directed polymer and uses it to characterize the so-called weak disorder phase of the model.

\smallskip 

 {\bf Overview of related literature.}  
Independently of the present work and with a different methodology, Krishnan \cite{Kri-14,Kri-15-} proves a variational  formula for undirected   first passage bond percolation  with bounded ergodic weights.  Taking an optimal control approach, he embeds the lattice problem into $\R^d$ and applies the recent stochastic homogenization results of Lions and Souganidis \cite{Lio-Sou-05} to derive a variational formula.  
 The resulting formula is a first passage percolation version of our formula \eqref{eq:g:K-var}.
The  homogenization parallel of our work is  \cite{Kos-Rez-Var-06, Kos-Var-08} rather than \cite{Arm-Sou-12, Lio-Sou-05}.  The quantity homogenized corresponds in our world to the finite-volume free energy.  



We run through a selection of highlights from past study of limiting shapes and  free energies.    
 For directed polymers Vargas \cite{varg-07} proved the a.s.\ existence of the limiting free energy  under moment assumptions similar to the ones we use. Earlier proofs with stronger assumptions appeared in \cite{carm-hu-02, come-shig-yosh-03}. 
In weak disorder the limiting polymer free energy is the same as the annealed one.  In strong disorder no general formulas  appeared in the literature  before  \cite{Ras-Sep-14, Ras-Sep-Yil-13}.  Carmona and Hu  \cite{carm-hu-02} gave some bounds in the Gaussian case. Lacoin \cite{laco-10}  gave small-$\beta$ asymptotics in dimensions $d=1,2$.   The earliest explicit free energy for an exactly solvable directed polymer model is the calculation in \cite{mori-oconn-07}  for the semi-discrete polymer in a Brownian environment.  Explicit   limits for the exactly solvable log-gamma polymer appear in \cite{geor-rass-sepp-yilm-15, sepp-12-aop}.   

The study of Lyapunov exponents and large deviations  for random walks in random environments is a  related   direction of literature.   \cite{Var-03-cpam,Zer-98-aop} are   two early  papers in the multidimensional setting.   

A seminal paper in the study of directed last-passage percolation is Rost 1981 \cite{rost}.  He deduced the limit shape of the corner growth model with exponential weights in conjunction  with a hydrodynamic limit for  TASEP (totally asymmetric simple exclusion process) with the step initial condition.  However, the last passage representation of this model  was discovered only later.  The study of directed last-passage percolation bloomed in the 1990s, with the first  shape results for exactly solvable cases  in \cite{aldo-diac95, cohn-elki-prop-96, jock-prop-shor-98, sepp98mprf, sepp-96}. Early motivation for  \cite{aldo-diac95}  came from   Hammersley 1972 \cite{hamm}.   The breakthroughs of \cite{baik-deif-joha-99, joha}  transformed the study of exactly solvable last-passage models and led to the first rigorous KPZ fluctuation results.  
The only universal shape result is the asymptotic result on the boundary of  $\R_+^2$ for the corner growth model by  Martin \cite{mart-04}. 


In  undirected
  first passage percolation the fundamental shape theorem  is due to  Cox and Durrett \cite{cox-durr-81}.  
A classic in the field is the flat edge result of Durrett and Liggett \cite{durr-ligg-81}.   Marchand \cite{Mar-02}  sharpened this result and   Auffinger and Damron
\cite{auff-damr-13}  built on it 
to prove differentiability of the shape at the edge of the percolation cone.  
 
 Busemann functions came on the percolation scene in the work of Newman and coauthors \cite{How-New-01, Lic-New-96, New-95}.  
 Busemann functions were shown to exist as 
almost sure limits of passage time gradients  
as a consequence  of  uniqueness and coalescence   of infinite directional geodesics, under uniform curvature assumptions on the limit shape.  These   assumptions  were  relaxed   through  a weak convergence approach of Damron and Hanson    \cite{Dam-Han-14}.  
Busemann functions have been used to study  competition in percolation models and properties of particle systems and randomly driven equations. For  a selection of the literature, see    \cite{bakh-cato-khan-14, Cat-Pim-11, Cat-Pim-12, cato-pime-13, ferr-pime-05, ferr-mart-pime-09, hoff-05, hoff-08, pime-07}. 


\medskip 

{\bf Organization of the paper.}
Section \ref{sec:free} defines the models and states the existence theorems
for the limiting free energies whose description is the purpose of the paper.  

Section \ref{sec:corr}  derives  the cocycle variational formula for the point-to-level case and develops an approach for solving these formulas. 

Section \ref{sec:tilt}  extends this to point-to-point free energy  via a duality between tilt and velocity.

Section \ref{sec:bus} demonstrates  how minimizing cocycles  arise from Busemann functions.  

Section \ref{sec:lg+exp}  explains how the theory of the paper works in explicitly solvable 1+1 dimensional models, namely the log-gamma polymer and the corner growth model with exponential weights.  

Section \ref{sec:entr}  develops variational formulas in terms of measures.   In the positive temperature case these formulas involve relative entropy.  

Section \ref{sec:finite} illustrates the results of the paper for  periodic environments  where our variational formulas become  elements of  Perron-Frobenius theory.  
\medskip 

{\bf Notation and conventions.}  We collect here some items for later reference.  $\N=\{1,2,3,\dotsc\}$, $\Z_+=\{0,1,2,\dotsc\}$,  
$\R_+=[0,\infty)$.       $\abs{x}=(\sum_i \abs{x_i}^2)^{1/2}$ 
  denotes Euclidean norm.   The standard basis vectors of $\R^d$ are 
  $e_1=(1,0,\dotsc,0), e_2=(0,1,0,\dotsc,0), \dotsc,e_d=(0,\dotsc,0,1)$. 
 $\cM_1(\mathcal X)$ denotes the space of Borel probability measures on
  a space  $\mathcal X$ and  $b\mathcal X$   the space of bounded Borel  functions   $f:\mathcal X\to\R$.       
$\P$ is a probability measure on environments $\w$, with expectation operation $\E$.  Expectation with respect to $\w$ of a multivariate function $F(\w, x, y)$ can be expressed as $\E F(\w,x,y)=\E F(x,y)=\int F(\w,x,y)\,\P(d\w)$.   $\triangle$ marks the end of an example  and  a remark.  

\section{Free energy in positive and zero temperature}\label{sec:free}  

In this section we describe the setting and state the limit theorems for 
free energy and last-passage percolation.  The positive temperature limits 
are   quoted from past work and then extended to last-passage 
percolation via a zero-temperature limit.  

Fix the dimension $d\in\N$.  Let $p:\Z^d\to[0,1]$ be a random walk probability kernel:    $\sum_{z\in\Z^d}p(z)=1$.   Assume $p$ has   finite  support  $\range=\{z\in\Z^d: p(z)>0\}$.    $\range$ must contain at least one nonzero point, and $\range$ may contain $0$.   A path  
$x_{0,n}=(x_k)_{k=0}^n$ 
in $\Z^d$ is  {\sl admissible} if its  steps satisfy 
$z_k\equiv x_k-x_{k-1}\in\range$.   The probability of an admissible path from a fixed initial point $x_0$ is $p(x_{0,n})=p(z_{1,n})=\prod_{i=1}^n p(z_i)$.    Let $\ellc=\min_{z\in\range}p(z)>0$.  

$\range$ generates the  
additive  subgroup $\gr= \{\sum_{z\in\range}a_z z:a_z\in\Z\}$ of $\Z^d$.  $\gr$   is  isomorphic to  some $\Z^k$ (Prop.~P1 on p.~65 in \cite{spitzer}).   $\Uset$ is
 the convex hull of $\range$ in $\R^d$, and $\ri\Uset$ 
 the relative interior of $\Uset$.    The common affine hull of $\range$ and $\Uset$  is denoted by $\aff\range=\aff\Uset$.
 
An {\sl environment} $\w$ is a sample point from a Polish probability space $(\Omega, \kS, \P)$ where $\kS$ is the Borel $\sigma$-algebra of $\Omega$.
$\Omega$ comes  equipped with a group  $\{T_x:{x\in\gr}\}$   of 
measurable commuting  bijections  
 that satisfy 
$T_{x+y}=T_xT_y$ and $T_0$ is the identity.
  $\P$ is a $\{T_x\}_{x\in\gr}$-invariant probability measure on $(\Omega,\kS)$. 
  This is summarized by the statement that $(\Omega,\kS,\P,\{T_x\}_{x\in\gr})$Ê
is a measurable dynamical system.   
We assume 
  $\P$  {\sl ergodic}.   As usual this means that    $\P(A)=0$ or $1$ 
for all events $A\in\kS$ that satisfy  $T_z^{-1}A=A$ for all $z\in\range$.  
Occasionally we make stronger assumptions on $\P$. 
$\E$  denotes expectation under $\P$.

A  {\sl potential}  is  a measurable function $V:\Omega\times\range^\ell\to\R$ for some $\ell\in\Z_+$, denoted by $V(\w,z_{1,\ell})$ for an environment  $\w$ and a vector   of  admissible steps $z_{1,\ell}=(z_1,\dotsc, z_\ell)\in\range^\ell$.   The case $\ell=0$ corresponds to a potential $V:\Omega\to\R$ that is a function of $\w$ alone. 
   The variational formulas from \cite{Ras-Sep-14}  and \cite{Ras-Sep-Yil-13} that
this article relies upon were proved under the following assumption on $V$.  
\begin{definition}[Class $\cL$]\label{def:cL}
A function $V:\Omega\times\range^\ell\to\R$ is in 
class $\cL$ if  for every  $ \tilde z_{1,\ell}=(\tilde z_1, \dotsc, \tilde z_\ell)\in\range^\ell$
and for every  nonzero  $z\in\range$,   
$V(\,\cdot\,,\tilde z_{1,\ell})\in L^1(\P)$   and  
\be \label{cL-cond}
\varlimsup_{\e\searrow0}\;\varlimsup_{n\to\infty} \;\max_{x\in\gr:\abs{x}\le n}\;\frac1n \sum_{0\le k\le\e n} 
\abs{V(T_{x+kz}\w, \tilde z_{1,\ell})}=0\quad\text{for $\P$-a.e.\ $\w$.}\ee  
\end{definition}
	

Membership  $V\in\cL$ depends on a combination of   mixing   of $\P$
and moments of $V$.  See   Lemma A.4 of \cite{Ras-Sep-Yil-13} for
 a precise statement.   Boundedness of $V$ is of course sufficient. 
   
  \begin{remark}\label{rm:prod}({\it Canonical settings})   Often the natural choice for $\Omega$ is a product space    $\Omega=\cS^{\Z^d}$ with a Polish space  $\cS$,  product topology, 
and 
   Borel $\sigma$-algebra $\kS$. A generic point of $\Omega$ is then denoted by 
  $\w=(\w_x)_{x\in\Z^d}$.  The mappings are shifts $(T_x\w)_y=\w_{x+y}$. 
   For example,  random weights assigned to the vertices of $\Z^d$  would be  modeled by $\Omega=\R^{\Z^d}$ and $V(\w)=\w_0$.  In fact, it would be sufficient to take $\Omega=\R^{\gr}$ since the coordinates outside $\gr$  are not needed as  long as paths begin at points in $\gr$.  
  
To represent  directed edge weights  we can  take   $\Omega=\cS^{\gr}$ with $\cS=\R^\range$ where an element $s\in\cS$ represents the   weights of the admissible edges out of the origin:  $s=(\w_{(0,z)}: z\in\range)$.    Then $\w_x=(\w_{(x,x+z)}: z\in\range)$ is the vector of edge weights out of vertex $x$. 
 Shifts act by $(T_u\w)_{(x,y)} =\w_{(x+u,y+u)}$ for $u\in\cG$. The potential is $V(\w,z)=\w_{(0,z)}=$ the weight of the edge $(0,z)$.   

To have weights on undirected nearest-neighbor edges take  
$\Omega=\R^{\cE}$ where   $\cE=\{ \{x,y\}\subset\Z^d:  \abs{y-x}=1\}$  is the set of undirected nearest-neighbor  edges on $\Z^d$. Now   $\range=\{ \pm e_i: i=1,\dotsc,d\}$, $V(\w,z)=\w_{\{0,z\}}$  and  $(T_u\w)_{\{x,y\}} =\w_{\{x+u,y+u\}}$ for $u\in\Z^d$.

   $\P$ is an {\sl i.i.d.}\ or {\sl product measure} if 
  the coordinates $\{\w_x\}_{x\in\Z^d}$ (or $\{\w_x\}_{x\in\gr}$ or $\{\w_e\}_{e\in\cE}$)  are independent and identically distributed  (i.i.d.) random
  variables under $\P$. 
      With an i.i.d.\ $\P$ and {\sl local}  
  $V$  (that is, $V$ depends on only finitely many coordinates of $\w$),  for  $V\in\cL$   it suffices to assume $V(\cdot\,, z_{1,\ell})\in L^p(\P)$ for some $p>d$ and all $z_{1,\ell}\in\range^\ell$. \hfill $\triangle$
    \end{remark}
 
For inverse temperature parameter $0<\beta<\infty$ define the $n$-step {\sl quenched partition function}  
 	\be\label{gpl8} 
	\Zpl^\beta_{0,(n)}=\sum_{x_{0,n+\ell-1}:\,x_0=0} p(x_{0,n+\ell-1}) \,e^{\beta\sum_{k=0}^{n-1}V(T_{x_k}\w,\, z_{k+1, k+\ell})}.\ee
	The sum is over admissible $(n+\ell-1)$-step  paths $x_{0,n+\ell-1}$ that start at  $x_0=0$.
The second argument of $V$ is the  $\ell$-vector  $z_{k+1, k+\ell}=(z_{k+1}, z_{k+2},\dotsc, z_{k+\ell})$ of steps, and it is not present if $\ell=0$.  	
The corresponding free energy 	is defined by 
\be\label{gpl5}  \Gpl^\beta_{0,(n)}= \beta^{-1} \log \Zpl^\beta_{0,(n)}. \ee  
In the $\beta\to\infty$ limit this turns into  the $n$-step {\sl last-passage time}  
 	\be \Gpl^\infty_{0,(n)}=\max_{x_{0,n+\ell-1}: \,x_0=0}\sum_{k=0}^{n-1}V(T_{x_k}\w,\, z_{k+1, k+\ell}).
	\label{gpl1}\ee
	 As in the definitions  above we shall consistently 
use the subscript $(n)$ with parentheses to indicate number of steps. 

 In the most basic situation where $d=2$ and  $\range=\{e_1,e_2\}$  the quantity 
   $\Gpl^\infty_{0,(n)}$ is   a {\sl point-to-line} last-passage value because admissible paths 
 $x_{0,n}$ go from $0$ to the line
$\{(i,j): i+j=n\}$.  We shall call  the general case
\eqref{gpl5}--\eqref{gpl1} {\sl point-to-level}.

The $n$-step  {\sl quenched point-to-point partition function}  is  for $x\in\Z^d$ 
 	\be\label{zpp1} \Zpp^\beta_{0,(n),x}=\sum_{x_{0,n+\ell-1}:\,x_0=0,\,x_n=x}p(x_{0,n+\ell-1})\, e^{\beta\sum_{k=0}^{n-1}V(T_{x_k}\w,\, z_{k+1, k+\ell})}\ee 
	with free energy
\[   \Gpl^\beta_{0,(n),x}= \beta^{-1} \log \Zpp^\beta_{0,(n),x}. \]
Its zero-temperature limit is 
 the $n$-step {\sl point-to-point last-passage time} 
 	\be\Gpp^\infty_{0,(n),x}=\max_{x_{0,n+\ell-1}:\, x_0=0,\,x_n=x}\sum_{k=0}^{n-1}V(T_{x_k}\w,\, z_{k+1,k+\ell}).\label{gpp1}\ee
	
\begin{remark}\label{path-rm}	
The formulas for limits presented in this paper are for the case where the 
length of the path is restricted, as in \eqref{zpp1} and \eqref{gpp1}, so that  
only those paths  that reach $x$ from $0$ in exactly  $n$ steps are considered.  This is indicated
by  the subscript $(n)$. 	Extension  to 
 paths of unrestricted length   from $0$ to $x$  or from $0$  to a  hyperplane is left for  future work.  
   In the most-studied  directed models   this restriction can be dropped because 
each path between two given points has the same number of steps.   
 Examples where this is the case 
  are  $\range=\{e_1,\dotsc,e_d\}$  and $\range=\{(z',1): z'\in\range'\}$
 for a finite  subset $\range'\subset\Z^{d-1}$.      \hfill   $\triangle$
\end{remark}


To take limits of point-to-point quantities    we 
specify  lattice points  $\xhat_n(\xi)$ that approximate $n\xi$ for $\xi\in\Uset$. 
For each  point $\xi\in\Uset$   fix weights $\alpha_z(\xi)\in[0,1]$ such that 
$\sum_{z\in\range}\alpha_z(\xi) =1$  and 
$\xi=\sum_{z\in\range}\alpha_z(\xi) z$.  Then define a path 
\begin{align}\label{eq:def:xhat}
\xhat_n(\xi)=\sum_{z\in\range}\bigl(\lfloor n\alpha_z(\xi)\rfloor +b_z^{(n)}(\xi)\bigr) z, \quad n\in\Z_+,  
\end{align}
where  $b_z^{(n)}(\xi)\in\{0,1\}$ are  arbitrary but subject to these constraints:  
 if $\alpha_z(\xi)=0$ then  $b_z^{(n)}(\xi)=0$,  and $\sum_{z\in\range} b_z^{(n)}(\xi) = n-\sum_{z\in\range}\fl{n\alpha_z(\xi)}$.   In other words,  $\xhat_n(\xi)$ is a lattice point 
that approximates $n\xi$ to within a constant independent of $n$, can be reached in $n$ $\range$-steps   from the origin,
and uses only those steps that appear in the pre-specified   convex representation 
$\xi=\sum_z \alpha_z z$.   
When $\xi\in\Uset\cap\Q^d$ we require that  $\alpha_z(\xi)$ be rational. This is 
  possible by Lemma A.1  of \cite{Ras-Sep-Yil-13}.

The next   theorem defines the limits whose study is the purpose of the paper.  
We state it so that it covers simultaneously both the positive temperature 
($0<\beta<\infty$) and the zero-temperature case (last-passage percolation, 
or $\beta=\infty$).   The subscripts are pl for point-to-level and pp for point-to-point.

\begin{theorem}\label{th:p2p}
Let  $V\in\cL$ and assume $\P$ ergodic. 
Let $\beta\in(0,\infty]$.   

\smallskip 

{\rm (a)}    The  nonrandom  limit 
		\begin{align}\label{eq:g:p2l}
		 \gpl^\beta=\lim_{n\to\infty}n^{-1}\Gpl^\beta_{0,(n)}
		\end{align}
exists $\P$-a.s.\ in $(-\infty,\infty]$.

\smallskip

{\rm (b)}   There exists an event $\Omega_0$ with $\P(\Omega_0)=1$ such that the following holds for all $\w\in\Omega_0$.  
For all $\xi\in\Uset$ and any choices made in the definition of $\xhat_n(\xi)$ in \eqref{eq:def:xhat},   the limit 
	\begin{align}\label{eq:g:p2p}
	\gpp^\beta(\xi)=\lim_{n\to\infty}n^{-1}\Gpp^\beta_{0,(n),\xhat_n(\xi)}
	\end{align}
 exists in $(-\infty,\infty]$.  For a particular $\xi$ the  limit is  independent of the choice of  convex representation $\xi=\sum_z\alpha_z(\xi) z$ and  the numbers $b^{(n)}_z(\xi)$ that define
 $\xhat_n(\xi)$ in \eqref{eq:def:xhat}.   We have the almost sure identity 
 	\begin{align}\gpl^\beta
	=\sup_{\xi\in\Q^d\cap\Uset}\gpp^\beta(\xi)
	=\sup_{\xi\in\Uset}\gpp^\beta(\xi).\label{eq:sup p2p:lpp}
	\end{align}
 \end{theorem}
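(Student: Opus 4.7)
The plan is to treat the whole range $\beta\in(0,\infty]$ uniformly, using the positive-temperature results of \cite{Ras-Sep-14,Ras-Sep-Yil-13} as a black box for $\beta<\infty$ and extending to $\beta=\infty$ by a monotone $\beta\to\infty$ sandwich. The basic tool is the elementary inequality $\max_i a_i\le \beta^{-1}\log\sum_{i=1}^N e^{\beta a_i}\le \max_i a_i+\beta^{-1}\log N$. Applied to the set of admissible $(n+\ell-1)$-step paths, of cardinality $|\range|^{n+\ell-1}$, this yields
\[
\Gpl^\infty_{0,(n)}-\beta^{-1}(n+\ell-1)\log|\range| \;\le\; \Gpl^\beta_{0,(n)} \;\le\; \Gpl^\infty_{0,(n)},
\]
and exactly the same bound with $\Gpl$ replaced by the point-to-point free energy $\Gpp_{0,(n),x}$ for each fixed endpoint $x$.

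For part (a), the limit $\gpl^\beta$ for $\beta<\infty$ is quoted from prior work under the class $\cL$ assumption. Dividing the sandwich by $n$ and letting $n\to\infty$ gives, almost surely, $\liminf_n n^{-1}\Gpl^\infty_{0,(n)}\ge \gpl^\beta$ and $\limsup_n n^{-1}\Gpl^\infty_{0,(n)}\le \gpl^\beta+\beta^{-1}\log|\range|$. Since $\beta\mapsto\gpl^\beta$ is monotone non-decreasing in $\beta$ and $\beta^{-1}\log|\range|\to 0$, sending $\beta\to\infty$ along a countable sequence establishes the existence of $\gpl^\infty:=\lim_{\beta\to\infty}\gpl^\beta\in(-\infty,\infty]$ as the a.s.\ limit of $n^{-1}\Gpl^\infty_{0,(n)}$.

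For part (b), the same sandwich applied endpoint-by-endpoint produces $\gpp^\infty(\xi)=\lim_{\beta\to\infty}\gpp^\beta(\xi)$ for each $\xi\in\Uset$. Independence from the choice of convex representation $\xi=\sum_z\alpha_z(\xi)z$ and of the numbers $b^{(n)}_z(\xi)$ follows because two valid definitions of $\xhat_n(\xi)$ differ by $O(1)$ and can be connected by an $O(1)$-step correction; the error this introduces is absorbed by \eqref{cL-cond}. To produce a single event $\Omega_0$ of full measure that works simultaneously for every $\xi\in\Uset$, I would first intersect the countably many null sets indexed by $(\beta,\xi)\in(\Q\cap(0,\infty])\times(\Q^d\cap\Uset)$ and then upgrade convergence from the countable dense set to all of $\Uset$ using concavity of $\xi\mapsto\gpp^\beta(\xi)$ (from the usual convex-combination-of-endpoints superadditive argument) together with an equicontinuity estimate for $n^{-1}\Gpp^\beta_{0,(n),\cdot}$ whose cost of local rerouting is again controlled by the class $\cL$ hypothesis. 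For the identity \eqref{eq:sup p2p:lpp}, decompose
\[
\Zpp^\beta_{0,(n)} \;=\; \sum_{x\in n\Uset\cap\gr} \Zpp^\beta_{0,(n),x},
\]
where the index set has cardinality at most $Cn^d$. The same elementary sandwich then yields $\Gpl^\beta_{0,(n)} \le \max_x \Gpp^\beta_{0,(n),x}+\beta^{-1}d\log(Cn)$, while $\Gpl^\beta_{0,(n)}\ge \Gpp^\beta_{0,(n),x}$ for every $x$ is immediate; for $\beta=\infty$ the lower bound becomes an equality. Dividing by $n$, letting $n\to\infty$, and using the a.s.\ continuity of $\gpp^\beta$ on $\Uset$ to replace the lattice maximum by a supremum over $\xi\in\Uset$, with $\Q^d\cap\Uset$ sufficing by continuity, yields both equalities in \eqref{eq:sup p2p:lpp}.

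The principal obstacle is not any individual inequality but the \emph{simultaneous} almost-sure convergence in part (b): a separate null set for each $\xi$ is not enough, and no single subadditive ergodic theorem runs jointly over $\xi$. One must combine the subadditive construction on the countable set $\Q^d\cap\Uset$ with a uniform regularity bound on $n^{-1}\Gpp^\beta_{0,(n),\cdot}$, and it is precisely the hypothesis \eqref{cL-cond} that makes the requisite equicontinuity available; a secondary subtlety is the careful tracking of the $O(1)$ corrections coming from the freedom in the definition \eqref{eq:def:xhat} of $\xhat_n(\xi)$.
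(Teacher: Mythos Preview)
Your approach is the paper's: quote \cite{Ras-Sep-14} for finite $\beta$ and extend to $\beta=\infty$ via the sandwich $\Gpl^\infty_{0,(n)}-\beta^{-1}n\log|\range|\le\Gpl^\beta_{0,(n)}\le\Gpl^\infty_{0,(n)}$ (and its point-to-point analogue). You over-complicate part (b), though: Theorem~2.2 of \cite{Ras-Sep-14} already delivers, for each fixed finite $\beta$, a \emph{single} full-measure event on which the point-to-point limits exist simultaneously for all $\xi\in\Uset$, are independent of the choices in \eqref{eq:def:xhat}, and satisfy \eqref{eq:sup p2p:lpp}; because the sandwich is uniform in $\w$ and $\xi$, intersecting over countably many $\beta$'s transfers all of this to $\beta=\infty$ at once. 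The equicontinuity upgrade and the separate endpoint-decomposition argument for \eqref{eq:sup p2p:lpp} are therefore unnecessary, and the ``principal obstacle'' you identify is already absorbed by the black box you invoke.
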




\begin{proof}   The case $0<\beta<\infty$ is covered by Theorem 2.2 of \cite{Ras-Sep-14}.  (The kernel there is the uniform one $p(z)=\abs{\range}^{-1}$ but this makes no difference to the arguments.  Alternatively, the kernel can be moved into the potential.)  

 For any $0<\beta<\infty$, 
\begin{align*}
\Gpl^\infty_{0,(n)}+\beta^{-1}(n+\ell-1)\log\ellc\;&\le\;\beta^{-1}\log\Zpl^\beta_{0,(n)}\;\le\; \Gpl^\infty_{0,(n)}\\
\text{ and }\qquad 
\Gpp^\infty_{0,(n),x}+\beta^{-1}(n+\ell-1)\log\ellc\;&\le\;\beta^{-1}\log\Zpp_{0,(n),x}^\beta
\;\le\;\Gpp^\infty_{0,(n),x}.
\end{align*}
Divide by $n$,  let first $n\to\infty$ and then $\beta\to\infty$.  This
gives the existence of the limits for the case $\beta=\infty$.  
 We also get these bounds, uniformly in $\w$ and $\xi\in\Uset$:   
\be \label{eq:g-Lambda} \begin{aligned}
\gpl^\infty+\beta^{-1}\log\ellc\;&\le\; \Lapl^\beta\;\le\; \gpl^\infty
\\
\text{ and}\qquad 
\gpp^\infty(\xi)+\beta^{-1}\log\ellc\;&\le\; \Lapp^\beta(\xi)\;\le\; \gpp^\infty(\xi).       
\end{aligned}   \ee
These bounds extend  \eqref{eq:sup p2p:lpp} from $0<\beta<\infty$ to $\beta=\infty$.   \hfill\hfill \qed  
 \end{proof}

Since our hypotheses are fairly general, we need to address 
 the randomness, finiteness, and 
regularity  of  the limits.   For $0<\beta<\infty$  the remarks below repeat 
claims proved in \cite{Ras-Sep-14}.  The properties extend to $\beta=\infty$
by way of bounds \eqref{eq:g-Lambda} as $\beta\to\infty$.  

\begin{remark}($\P$ {\it ergodic})
If we only assume $\P$ ergodic and place no further restrictions on admissible paths
then we need to begin by assuming that $\gpl^\beta\in\R$.  An obvious way to
guarantee this would be to assume that $V$ is bounded above (in addition to what
is assumed to have $V\in\cL$). 
Under the assumption $\gpl^\beta\in\R$
the point-to-point limit 
  $\gpp^\beta(\xi)$  
  is a nonrandom, real-valued, concave and  continuous function 
on the relative interior $\ri\Uset$.    Boundary values $\gpp^\beta(\xi)$ 
for $\xi\in\Uset\smallsetminus\ri\Uset$ can be
random, but 
on the whole of $\Uset$,   for  $\P$-a.e.~$\w$,   the (possibly random) function   $\xi\mapsto \gpp^\beta(\xi;\w)$ is    lower semicontinuous and bounded.   The    upper semicontinuous regularization of $\gpp^\beta$  and  its unique continuous extension from  
 $\ri\Uset$ to   $\Uset$ are equal  and nonrandom.   \hfill $\triangle$
 \end{remark}

\begin{remark}\label{rmk:dir-iid}({\it Directed i.i.d.\ $L^{d+\e}$ case})
Assume the 
canonical setting from Remark \ref{rm:prod}:  $\Omega$ is a product space,  $\P$ is i.i.d.,   $V$ is local,
 and  $\E[\abs{V(\w,z_{1,\ell})}^p]<\infty$ 
  for some $p>d$ and $\forall z_{1,\ell}\in\range^\ell$.     Assume additionally that $0\not\in\Uset$.   We call this 
   the {\sl directed i.i.d.\ $L^{d+\e}$  case}.   Then $V\in\cL$,  $\gpl^\beta\in\R$, and 
the point-to-point limit 
  $\gpp^\beta(\xi)$  
  is a nonrandom, real-valued, concave and  continuous function 
on all of  $\Uset$ (Theorem 3.2(a) of \cite{Ras-Sep-14}).     \hfill $\triangle$
\end{remark}

\section{Cocycle variational formula for the point-to-level case}
\label{sec:corr} 

 In  Sections \ref{sec:corr}--\ref{sec:bus}   we study   potentials
 of the form 
 \be\label{V0h} V(\w, z)=\Vw(\w, z)+h\cdot z, \qquad  (\w, z) \in \Omega\times\range \ee
  for a measurable function 
 $\Vw:\Omega\times\range\to\R$ and  a vector $h\in\R^d$.  We think of $V_0$ as fixed and
 $h$ as  a variable and hence amend our notation as follows.  As before the steps of admissible paths are $z_k=x_k-x_{k-1}\in\range$. 
  \be
\Gpl_{0,(n)}^\beta(h)=   \beta^{-1}\log
 \sum_{x_{0,n}: \,x_0=0} p(x_{0,n})\,e^{\beta \sum_{k=0}^{n-1}\Vw(T_{x_k}\w, \,z_{k+1}) + \beta h\cdot x_n}   
 \label{p2lh-beta}\ee
 for $0<\beta<\infty$,
 \be
\Gpl_{0,(n)}^\infty(h)=  
 \max_{x_{0,n}: \,x_0=0} \Bigl\{ \;\sum_{k=0}^{n-1}\Vw(T_{x_k}\w, z_{k+1}) + h\cdot x_n \Bigr\},  
 \label{p2lh}\ee
and 
\be\label{p2lh-lim} 
\gpl^\beta(h)=  \lim_{n\to\infty} n^{-1}\Gpl_{0,(n)}^\beta(h)   \quad  \text{a.s.  for all  $0<\beta\le \infty$}.
\ee 
Limit  \eqref{p2lh-lim}  is a  special case of   \eqref{eq:g:p2l}.    

 By \eqref{eq:g-Lambda}, if $\gpl^\beta(0)$  is finite for one $\beta\in(0,\infty]$, it is finite for all $\beta\in(0,\infty]$.    This can be guaranteed
  by assuming $\Vw$ bounded above, or by the directed 
i.i.d.\ $L^{d+\e}$ assumption of
Remark \ref{rmk:dir-iid}, or by some other case-specific assumption. 
If $\gpl^\beta(0)$  is finite, it is clear from the expressions above that  $\gpl^\beta(h)$ is 
 a real-valued convex Lipschitz function of $h\in\R^d$.  
 
We   develop   a variational formula for $\gpl^\beta(h)$ for  $\beta\in(0,\infty]$  
 in terms of gradient-like 
cocycles, and identify a condition that singles out extremal cocycles.    
For $0<\beta<\infty$ this variational formula appeared in \cite{Ras-Sep-Yil-13}
and here we extend it to $\beta=\infty$.  The solution proposal is new for 
all $\beta$.  
  
\begin{definition}[Cocycles]
\label{def:cK}
A measurable function $F:\Omega\times\gr^2\to\R$ is   a {\rm stationary  cocycle}
if it satisfies these two   conditions for $\P$-a.e.\ $\w$ and  all $x,y,z\in\gr$:
\begin{align*}
F(\w, z+x,z+y)&=F(T_z\w, x,y)  \qquad \text{{\rm(}stationarity{\rm)}}  \\
F(\w,x,y)+F(\w,y,z)&=F(\w, x,z) \qquad  \;\  \ \text{{\rm(}additivity{\rm)}.}
\end{align*}
If   $\E\abs{F(x,y)} <\infty$   $\forall x,y\in\gr$ then $F$ is an  {\rm   $L^1(\P)$ cocycle}, and if also $\E[F(x,y)]=0$  $\forall x,y\in\gr$ then $F$ is  {\rm centered}.    $\cK$ denotes the space of stationary $L^1(\P)$  cocycles, and $\cK_0$ denotes  the subspace of  centered stationary $L^1(\P)$  cocycles.  
\end{definition}



As illustrated above,   $\w$ can be   dropped from the notation
$F(\w,x,y)$.  
The term cocyle is borrowed from differential forms terminology, see e.g.\  \cite{Ken-09}.  One could also use the term {\sl conservative flow} or {\sl curl-free flow} following 
vector fields terminology. 

The space  $\cK_0$  
is the $L^1(\P)$ closure of gradients $F(\w,x,y)=\varphi(T_y\w)-\varphi(T_x\w)$ \cite[Lemma C.3]{Ras-Sep-Yil-13}.   
For $B\in\cK$    there exists a 
 vector $h(\B)\in\R^d$ such that   
 \be \E[\B(0,z)]=-h(\B)\cdot z  \qquad \text{ for all $z\in\range$. }  \label{EB}\ee
   Existence of $h(B)$ follows  because $c(x)=\E[\B(0,x)]$ is an additive function  on the 
  group $\gr\cong\Z^k$.   $h(B)$ is not unique unless $\range$ spans $\R^d$, but the inner products $h(B)\cdot x$ for $x\in\gr$ are uniquely defined.  
Then 
\be F(\w, x,y)=  h(\B)\cdot (x-y)-\B(\w, x,y), \qquad x,y\in\gr \label{FF}\ee
 is a centered  stationary $L^1(\P)$  cocycle.



\begin{theorem}\label{th:K-var}
Let  $\Vw\in\cL$ and assume $\P$ ergodic.
Then the limits in \eqref{p2lh-lim}   have these variational representations:   for $0<\beta<\infty$ 
\be	\gpl^\beta(h)=\inf_{F\in\cK_0} \,\P\text{-}\esssup_\w\;  \beta^{-1}\log \sum_{z\in\range} p(z)e^{\beta \Vw(\w,z)+\beta h\cdot z+\beta F(\w, 0,z)} \label{eq:Lambda:K-var}  \ee
 and 
	\begin{align}
	\gpl^\infty(h)&=\inf_{F\in\cK_0} \,\P\text{-}\esssup_\w\;  \max_{z\in\range} \{\Vw(\w,z)+ h\cdot z+F(\w, 0,z)\}.\label{eq:g:K-var}
	\end{align}
A minimizing 	$F\in\cK_0$ exists for each $0<\beta\le\infty$ and $h\in\R^2$. 
 \end{theorem}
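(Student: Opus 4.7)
The plan is to leverage the positive-temperature version of the theorem, which is essentially due to Rosenbluth \cite{Ros-06} and Rassoul-Agha-Sepp\"al\"ainen-Yilmaz \cite{Ras-Sep-Yil-13}. So I take formula \eqref{eq:Lambda:K-var} and the existence of a minimizer for $0<\beta<\infty$ as given from that prior work; the remaining task is to establish \eqref{eq:g:K-var} and to produce a minimizer at $\beta=\infty$.

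The derivation of \eqref{eq:g:K-var} is a straightforward sandwich argument that runs parallel to the one used to prove \eqref{eq:g-Lambda} in the proof of Theorem \ref{th:p2p}. For any cocycle $F\in\cK_0$ and any $\beta\in(0,\infty)$, the elementary log-sum-exp inequality gives, pointwise in $\w$,
\[
\max_{z\in\range}\{\Vw(\w)+h\cdot z+F(\w,0,z)\}-\beta^{-1}\log\abs\range
\le \beta^{-1}\log\sum_{z\in\range}\abs\range^{-1}e^{\beta(\Vw(\w)+h\cdot z+F(\w,0,z))}
\le \max_{z\in\range}\{\Vw(\w)+h\cdot z+F(\w,0,z)\}.
\]
Taking $\P$-$\esssup$ in $\w$, then the infimum over $F\in\cK_0$, and invoking \eqref{eq:Lambda:K-var} yields
\[
\inf_{F\in\cK_0}\P\text{-}\esssup_\w\max_{z\in\range}\{\Vw+h\cdot z+F(0,z)\}
\,-\,\beta^{-1}\log\abs\range
\;\le\; \gpl^\beta(h)\;\le\;
\inf_{F\in\cK_0}\P\text{-}\esssup_\w\max_{z\in\range}\{\Vw+h\cdot z+F(0,z)\}.
\]
Since $\gpl^\beta(h)\to\gpl^\infty(h)$ as $\beta\to\infty$ by \eqref{eq:g-Lambda}, sending $\beta\to\infty$ sandwiches $\gpl^\infty(h)$ between two copies of the same infimum, which proves \eqref{eq:g:K-var}.

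To produce a minimizer at $\beta=\infty$, I would take a minimizing sequence $F_n\in\cK_0$ with $\gamma_n:=\P\text{-}\esssup_\w\max_z\{\Vw+h\cdot z+F_n(0,z)\}\downarrow \gpl^\infty(h)$ and extract a weak $L^1(\P)$ limit of the finite family $\{F_n(\cdot,0,z):z\in\range\}$. The $\gamma_n$-bound forces $F_n(\w,0,z)\le \gamma_n-\Vw(\w)-h\cdot z$ $\P$-almost surely, so the positive parts $F_n(0,z)^+$ are dominated by a fixed integrable function and hence uniformly integrable; centering $\E[F_n(0,z)]=0$ then bounds $\E[F_n(0,z)^-]$ and hence $\E\abs{F_n(0,z)}$ uniformly in $n$. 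A Koml\'os/Mazur convex-combination argument produces a subsequence of convex combinations (themselves in $\cK_0$, since $\cK_0$ is a convex cone) converging $\P$-almost surely to a limit $F^*$. The cocycle identity and stationarity pass to the pointwise limit; $L^1$ and centering pass via Fatou applied to the uniformly dominated positive parts. Finally, since $\P$-$\esssup_\w\max_z\{\Vw+h\cdot z+\,\cdot\,(0,z)\}$ is convex and lower semicontinuous under this mode of convergence, the limit $F^*$ achieves the infimum.

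The main obstacle is the compactness step: even with the one-sided essential-sup bound and the centering $\E F_n(0,z)=0$ in hand, weak $L^1$-compactness requires uniform integrability of the negative parts $F_n(0,z)^-$, which is not immediate from mere $L^1$-boundedness. I expect the cleanest route is to replace genuine weak convergence with pointwise a.s.\ convergence of Cesàro averages via Koml\'os's theorem, since convex combinations remain in $\cK_0$ and the functional $F\mapsto\P\text{-}\esssup_\w\max_z\{\Vw+h\cdot z+F(0,z)\}$ is convex; this sidesteps the need to verify full uniform integrability and only uses the easy one-sided uniform $L^1$-domination. Verifying that the resulting pointwise limit is genuinely centered (rather than only $L^1$-bounded) is the remaining technical point and is handled by combining the upper domination with Fatou's lemma applied to $F_n(0,z)^- = F_n(0,z)^+ - F_n(0,z)$.
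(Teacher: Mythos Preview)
Your sandwich derivation of \eqref{eq:g:K-var} is exactly the paper's argument. For the existence of a minimizer at $\beta=\infty$ the paper gives no details at all: it cites Theorem~2.3 of \cite{rass-sepp-yilm-15} for $0<\beta<\infty$ and simply asserts that ``the same proof works also for $\beta=\infty$.'' Your Koml\'os/Ces\`aro compactness sketch is thus more explicit than what the paper provides and is in the spirit of what the cited reference does.

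There is, however, one genuine loose end in your sketch, precisely at the point you flag as ``the remaining technical point.'' Your stated mechanism---upper domination of $G_m(0,z)^+$ together with Fatou on $G_m(0,z)^-$---yields only $\E[F^*(0,z)]\ge 0$, not $=0$. Indeed, dominated convergence gives $\E[G_m(0,z)^+]\to\E[F^*(0,z)^+]$, while Fatou gives
\[
\E[F^*(0,z)^-]\le\liminf_m\E[G_m(0,z)^-]=\liminf_m\E[G_m(0,z)^+]=\E[F^*(0,z)^+],
\]
so $\E[F^*(0,z)]\ge 0$ for every $z\in\range$; nothing in your argument forces equality. The repair is one additional line: since $F^*$ is a stationary $L^1$ cocycle, its means are additive and hence $\E[F^*(0,z)]=-h^*\cdot z$ for some $h^*$ with $h^*\cdot z\le 0$ for all $z\in\range$. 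Recentering to $\tilde F(\w,0,z)=F^*(\w,0,z)+h^*\cdot z\in\cK_0$ only decreases $\max_z\{V_0(\w)+h\cdot z+\,\cdot\,\}$ pointwise, so $\tilde F$ already attains the infimum $\gpl^\infty(h)$.
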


\begin{proof}   
Theorem 2.1 of  \cite{Ras-Sep-Yil-15-}
gives formula \eqref{eq:Lambda:K-var}   for $0<\beta<\infty$.  The kernel in that reference is the uniform one $p(z)=\abs\range^{-1}$ but changing the kernel makes no difference to the proof.  
To get the formula for $\beta=\infty$,  note that for $\beta>0$ and  $F\in\cK_0$, 
 \begin{align*} 
 &\beta^{-1}\log\sum_z p(z)e^{\beta V(\w,z)+\beta F(\w,0,z)}
 \le\max_z \{ V(\w,z)+ F(\w,0,z)\} \\
 &\qquad\qquad\qquad\qquad\qquad\qquad\le
 \beta^{-1}\log\sum_z p(z)e^{\beta V(\w,z)+\beta F(\w,0,z)}+\beta^{-1}\log\ellc^{-1}.\end{align*}
Thus  
 \begin{align*}
 \Lapl^\beta \;\le\; \inf_{F\in\cK_0} \P\text{-}\esssup_\w\;    \max_z \{V(\w,z)+F(\w,0,z)\} \le \; \Lapl^\beta+\beta^{-1}\log\ellc^{-1}.\end{align*} 
 Formula \eqref{eq:g:K-var}  follows from this and \eqref{eq:g-Lambda}, upon letting
 $\beta\to\infty$.
  Theorem 2.3 of \cite{Ras-Sep-Yil-15-}  gives the existence of a minimizer   for $0<\beta<\infty$, and the same proof works   also for  $\beta=\infty$. \hfill\hfill\qed
 \end{proof}
 
Assuming  $\gpl^\beta(0)$ finite is not necessary  for Theorem \ref{th:K-var}.  By the assumption $\Vw\in\cL$, any $F\in\cK_0$ that makes the right-hand side of \eqref{eq:g:K-var}  finite  satisfies   the ergodic theorem (Theorem \ref{th:Atilla}) in the appendix.  Then potential $V(\w,z)$ can be replaced by $V(\w,z)+F(\w,0,z)$  without altering $\gpl^\beta(h)$,  and consequently $\gpl^\beta(h)$  is finite.
 
   Formulas \eqref{eq:Lambda:K-var} and \eqref{eq:g:K-var} can be viewed as infinite-dimensional versions of the min-max variational formula for the Perron-Frobenius eigenvalue of a nonnegative matrix.   This connection is discussed 
in Section \ref{sec:finite}.

 The next definition and  theorem offer a way  
to identify  a minimizing $F$ 
for   \eqref{eq:Lambda:K-var} and \eqref{eq:g:K-var}.   Later we explain how Busemann functions provide minimizers that match this recipe.  
    That this approach is feasible will be demonstrated by examples:   weak disorder (Example \ref{ex:weak1}),  the exactly solvable log-gamma polymer 
   (Section \ref{sec:lg} below) and the corner growth model with exponential weights
(Section \ref{sec:exp}).  This strategy is  carried out for the two-dimensional
  corner growth model with general weights (a non-solvable case)  in article  \cite{geor-rass-sepp-lppbuse}.  

\begin{definition}\label{def:bdry-model}
Fix $\beta\in(0,\infty]$.  
A stationary $L^1$ cocycle $B$ 
{\rm is adapted  to}  potential $\Vw$ if 
the following condition  holds.   If $0<\beta<\infty$ the requirement is 
\be 
\sum_{z\in\range}p(z)\,e^{\beta \Vw(\w,z)-\beta B(\w, 0,z)} =1 
 \quad\text{ for }\P\text{-a.e.\ }\w,  
\label{VBbeta}\ee
while if $\beta=\infty$ then the condition is  
\be 
\max_{z\in\range} \{ \Vw(\w,z) -B(\w, 0,z)\}=0 
\quad\text{ for }\P\text{-a.e.\ }\w.
\label{VB}\ee
 \end{definition}
 

\begin{theorem}\label{thm:minimizer}   Fix $\beta\in(0,\infty]$, 
assume $\P$   ergodic and  $\Vw\in\cL$.  
Suppose we have  a stationary $L^1$ cocycle   $B$  that is adapted to 
  $\Vw$ in the sense of Definition \ref{def:bdry-model}.
Define $h(B)$ and $F$ as in   \eqref{EB}--\eqref{FF}. Then we have  conclusions {\rm (i)--(ii)} below.

\smallskip

{\rm (i)}  $\gpl^\beta(h(B))=0$.    $\gpl^\beta(h)$ is finite for all $h\in\R^d$. 

\smallskip

{\rm (ii)}  $F$ solves the variational formula.  Precisely, assume $h\in\R^d$    satisfies 
\be  (h-h(B))\cdot(z-z')=0 \quad\text{ for all $z,z'\in\range$. } \label{h(B)-1}\ee
Under this assumption  we have the two cases below. 

\smallskip

{\rm (ii-a)}   Case $0<\beta<\infty$.    $ F$ is a minimizer in \eqref{eq:Lambda:K-var} 
  for potential  $V(\w,z)=\Vw(\w,z)+h\cdot z$.  The essential supremum in  \eqref{eq:Lambda:K-var} 
   disappears and we have, 
 for $\P$-a.e.\ $\w$ and any $z'\in\range$, 
\be \label{eq:Kvar:minbeta}	\begin{aligned}
	\Lapl^\beta(h)&\;=\;\beta^{-1}\log\sum_{z\in\range} p(z)\,e^{\beta\Vw(\w,z)+\beta h\cdot z+\beta F(\w, 0,z)}
	\;=\;(h-h(B))\cdot z'.
	\end{aligned}\ee
	
\smallskip

{\rm (ii-b)}  Case $\beta=\infty$.  Then     $F$ is a minimizer in  
 \eqref{eq:g:K-var} for potential  $V(\w,z)=\Vw(\w,z)+h\cdot z$.  The essential supremum in  \eqref{eq:g:K-var} 
   disappears   and we have,  
 for $\P$-a.e.\ $\w$ and any $z'\in\range$, 
\be \label{eq:Kvar:min}	\begin{aligned}
	\gpl^\infty(h)&\;=\;\max_{z\in\range} \;  \{\Vw(\w,z)+h\cdot z+F(\w, 0,z)\}
	\;=\;(h-h(B))\cdot z'.
	\end{aligned}\ee
\end{theorem}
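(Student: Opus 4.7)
The plan is to first establish part (ii) and deduce (i) as the special case $h=h(B)$, in which hypothesis \eqref{h(B)-1} holds trivially and the constant $c:=(h-h(B))\cdot z'$ vanishes. The argument for (ii) splits into an exact pointwise computation matching the upper bound from Theorem \ref{th:K-var}, followed by a separately constructed lower bound.

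First I would check that the $F$ defined by \eqref{FF} lies in $\cK_0$: centering $\E[F(0,z)]=-h(B)\cdot z-\E[B(0,z)]=0$ is immediate from \eqref{EB}, and stationarity and the cocycle property are inherited from $B$. Then substituting $B(\w,0,z)=-h(B)\cdot z-F(\w,0,z)$ into the recovery conditions, and using that \eqref{h(B)-1} forces $(h-h(B))\cdot z$ to equal the constant $c$ for every $z\in\range$, I compute the inner expression of the cocycle variational formula pointwise. For $\beta=\infty$,
\[\max_{z\in\range}\{\Vw(\w)+h\cdot z+F(\w,0,z)\}=\Vw(\w)+c-\min_{z\in\range}B(\w,0,z)=c\]
by \eqref{VB}, and an analogous log-sum-exp computation using \eqref{VBbeta} handles $0<\beta<\infty$. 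Theorem \ref{th:K-var} then gives $\gpl^\beta(h)\le c$ and shows that the essential supremum collapses to a pointwise constant.

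Next I would produce the matching lower bound $\gpl^\beta(h)\ge c$ by constructing a near-optimal admissible path. For $\beta=\infty$, a measurable selector $\phi(\w)$ attaining $\min_{z\in\range}B(\w,0,z)$ defines a greedy path $x^*_{k+1}=x^*_k+\phi(T_{x^*_k}\w)$ along which $\Vw(T_{x^*_k}\w)=B(\w,x^*_k,x^*_{k+1})$; the cocycle property telescopes the potential into $B(\w,0,x^*_n)=-h(B)\cdot x^*_n-F(\w,0,x^*_n)$, and using \eqref{h(B)-1} to identify $(h-h(B))\cdot x^*_n=nc$ yields $\Gpl^\infty_{0,(n)}(h)\ge nc-F(\w,0,x^*_n)$. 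For $0<\beta<\infty$, recovery \eqref{VBbeta} says that $p(\w,z):=|\range|^{-1}e^{\beta \Vw(\w)-\beta B(\w,0,z)}$ is a stochastic kernel on $\range$; rewriting the partition function in terms of the induced Markov chain $(X_k)$ gives $\Zpl^\beta_{0,(n)}(h)=e^{n\beta c}\,E^p[e^{-\beta F(\w,0,X_n)}]$, and Jensen's inequality produces $\Gpl^\beta_{0,(n)}(h)\ge nc-E^p[F(\w,0,X_n)]$.

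In both cases the argument reduces to the ergodic-type estimate $n^{-1}F(\w,0,X_n)\to 0$ along the relevant path. This is the main obstacle: $F$ is only assumed to be a centered stationary $L^1$ cocycle, so neither an immediate appeal to Birkhoff nor a routine class-$\cL$ uniform bound is directly available. I expect the cleanest route is via the environment seen from the walk: the induced shift $\w\mapsto T_{\phi(\w)}\w$ (when $\beta=\infty$) or the Markov kernel generated by $p$ (when $\beta<\infty$) admits an invariant probability measure absolutely continuous with respect to $\P$, and ergodicity of this auxiliary dynamical system applied to the stationary increment sequence $F(T_{X_k}\w,0,Z_{k+1})$, whose step mean $\E[F(0,z)]=0$, provides the desired $o(n)$ control. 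Granting this step we conclude $\gpl^\beta(h)\ge c$, completing the proof of (ii) and hence of (i).
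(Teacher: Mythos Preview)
Your pointwise computation and the upper bound via Theorem \ref{th:K-var} are fine, and match the paper's argument. The gap is in the lower bound. The assertion that the greedy map $\w\mapsto T_{\phi(\w)}\w$ (or the kernel $p$) admits an invariant probability measure absolutely continuous with respect to $\P$ is not justified by anything in the hypotheses; in general ergodic environments there is no mechanism forcing such a measure to exist, and even if one did exist you would still need a pointwise ergodic theorem for $\P$-a.e.\ starting configuration, not just for the invariant measure. So as written the step ``$n^{-1}F(\w,0,X_n)\to 0$'' is unsupported.

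The paper closes this gap with a different and much shorter device: the recovery condition yields the one-sided bound $F(\w,0,z)\le -\Vw(\w)+C$, which places $\max_z F(\cdot,0,z)$ below a function of class $\cL$. Theorem \ref{th:Atilla} (the uniform ergodic theorem for centered cocycles) then gives $\max_{x_{0,n}}\abs{F(\w,0,x_n)}=o(n)$ over \emph{all} admissible $n$-step paths, with no need to single out a greedy path or build an auxiliary Markov chain. With that in hand the paper reverses your order of argument: it iterates the recovery identity to write the full $n$-step free energy at tilt $h(B)$ as an expression whose only nontrivial term is $F(\w,0,x_n)$, so the uniform $o(n)$ bound immediately yields $\gpl^\beta(h(B))=0$, which is (i). Part (ii) then follows from (i) by the elementary identity $\gpl^\beta(h)=\gpl^\beta(h(B))+(h-h(B))\cdot z'$, combined with your pointwise computation. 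If you want to salvage your structure, simply replace the invariant-measure paragraph by an appeal to Theorem \ref{th:Atilla}; it supplies exactly the $o(n)$ control you need along your greedy path (and indeed along every path).
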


Condition \eqref{h(B)-1}  says that $h-h(B)$ is orthogonal to the affine hull of
$\range$ in $\R^d$.  If $0\in\Uset$   this affine hull is  the linear  span of $\range$ in  $\R^d$.  

\begin{remark}({\it Correctors})
A mean-zero cocycle that minimizes in  \eqref{eq:Lambda:K-var}  or  \eqref{eq:g:K-var}  without the essential supremum (that is, satisfies the first equality of \eqref{eq:Kvar:minbeta}  or \eqref{eq:Kvar:min}) could be  called a {\it corrector} by analogy with the homogenization literature (see  for example Section 7 in \cite{Kos-07} and top of  page 468 in  \cite{Arm-Sou-12}).    These correctors have been useful in the study of infinite geodesics in the corner growth model   \cite{geor-rass-sepp-lppgeo} 
and infinite directed  polymers  \cite{geor-rass-sepp-yilm-15}.        \hfill $\triangle$
\end{remark}

\begin{proofof}{of Theorem \ref{thm:minimizer}}
{{\sl Case $0<\beta<\infty$.}}   From assumption \eqref{VBbeta} and definition \eqref{FF} of $F$
\begin{align}
\log\sum_{z\in\range} p(z)\,e^{\beta \Vw(\w,\,z)+\beta h(B)\cdot z+ \beta F(\w, 0,z)}=0\quad \text{ for $\P$-a.e.\ $\w$}.\label{eq:auxbeta}
\end{align}
Iterating this 
gives (with $x_k=z_1+\dotsm+z_k$) 
\begin{align}
\log\sum_{z_{1,n}\in\range^n}p(x_{0,n})\,e^{\beta \sum_{k=0}^{n-1} \Vw(T_{x_k}\w, \,z_{k+1})+\beta h(B)\cdot x_n+\beta   F(\w, 0, x_{n})}=0.\label{eq:stat-tiltbeta}
\end{align}

Assumption \eqref{VBbeta}  gives the bound 
$  F(\w, 0,z) \le  \Vw^*(\w)+C  $  for $z\in\range$, 
 with 
 \be\label{V*} \Vw^*(\w)=\max_{z\in\range} \abs{\Vw(\w,z)} \ee
 that satisfies  $\Vw^*\in\cL$  and   a constant $C$.  
By Theorem \ref{th:Atilla} in the appendix,   $F(\w, 0, x_{n})=o(n)$ uniformly in $z_{1,n}$, $\P$-almost surely. 
 It follows from \eqref{eq:stat-tiltbeta} that $\Lapl^\beta(h(B))=0$.   Since the steps of the walks are bounded,  finiteness of $\gpl^\beta(h)$  for all $h$ follows from the definition \eqref{p2lh-beta}.  

Assume   \eqref{h(B)-1}  for $h$.  Then   \eqref{eq:auxbeta} gives
\[ \beta^{-1} \log\sum_{z\in\range}p(z)\, e^{\beta \Vw(\w,\,z)+\beta h\cdot z+ \beta F(\w, 0,z)}=  (h-h(B))\cdot z' \]
while from   \eqref{p2lh-beta} and \eqref{p2lh-lim} 
\[ \Lapl^\beta(h)=\Lapl^\beta(h(B)) + (h-h(B))\cdot z' =  (h-h(B))\cdot z'. \]
We have verified  \eqref{eq:Kvar:minbeta}.

\smallskip

{\sl Case $\beta=\infty$.}
From assumption \eqref{VB} and definition \eqref{FF} of $F$
\begin{align}
\max_{z\in\range} \{\Vw(\w,z)+h(B)\cdot z+ F(\w, 0,z)\}=0\quad \text{ for $\P$-a.e.\ $\w$}.\label{eq:aux}
\end{align}
Iterating this gives (with $x_k=z_1+\dotsm+z_k$) 
\begin{align}
\max_{z_{1,n}\in\range^n}\Bigl\{\;\sum_{k=0}^{n-1} \Vw(T_{x_k}\w, z_{k+1})+h(B)\cdot x_n+F(\w, 0, x_{n}) \Bigr\}=0.\label{eq:stationary-tilt}
\end{align}
By Theorem \ref{th:Atilla},  $F(\w, 0, x_{n})=o(n)$ uniformly in $z_{1,n}$ $\P$-a.s. 
It follows that $\gpl(h(B))=0$. 

Assume   \eqref{h(B)-1}  for $h$.  Then   \eqref{eq:aux} gives
\[  \max_{z\in\range} \{\Vw(\w,z)+h\cdot z+ F(\w, 0,z)\}=  (h-h(B))\cdot z' \]
while from   \eqref{p2lh}--\eqref{p2lh-lim} 
\[ \gpl^\infty(h)=\gpl^\infty(h(B)) + (h-h(B))\cdot z' =  (h-h(B))\cdot z'.\tag*{\qed}\] 
\end{proofof}



\smallskip

\begin{remark}  The  results of this section   extend
 to the   more general potentials   $V(T_{x_k}\w,z_{k+1,k+\ell})$ discussed
 in Section \ref{sec:free}.   For the  definition of the cocycle  see 
  Definition 2.2 of \cite{Ras-Sep-Yil-13}.  
We do not pursue these generalizations  to avoid becoming overly technical and 
because presently we do not have an interesting
example of  this more general potential.       \hfill $\triangle$
\end{remark}

The remainder of this section discusses an  example that illustrates  Theorem \ref{thm:minimizer}.  

\begin{example}\label{ex:weak1}({\it Directed polymer in weak disorder})   
We consider the standard  $k+1$ dimensional  directed polymer in an i.i.d.\ random environment, or ``bulk disorder''. (For references see \cite{come-shig-yosh-04, come-yosh-aop-06, denholl-polymer}.) We show that the condition of weak disorder itself gives the 
corrector that solves the variational formula for the point-to-level free energy.  
The background walk is a simple random walk in $\Z^k$, and we use an additional 
$(k+1)$st coordinate to represent time.  
So $d=k+1$,  $\Omega=\R^{\Z^d}$,  $\P$ is  i.i.d.,   $\range=\{(\pm e_i,1): 1\le i\le k\}$ and $p(z)=\abs{\range}^{-1}$ for $z\in\range$. The potential is 
simply the environment at the site:  $V_0(\w)=\w_0$.  
  
 Define the   logarithmic moment generating functions 
\beq \lambda(\beta)=\log\E(e^{\beta\w_0}) \quad\text{for $\beta\in\R$}  \label{lambdabeta}\eeq
and  
\beq \kappa(h)=\log \sum_{z\in\range} p(z)\, e^{h\cdot z} 
\quad\text{for $h\in\R^{d}$.}  
\label{kappa1}\eeq
Consider only $\beta$-values such  that $\lambda(\beta)<\infty$.  
 The normalized partition function 
\[  
W_n = e^{-n(\lambda(\beta)+\kappa(\beta h))} \sum_{x_{0,n}} p(x_{0,n})\,e^{\beta \sum_{k=0}^{n-1}\w_{x_k} + \beta h\cdot x_n}
\]
is a positive   mean 1 martingale.   The weak disorder assumption is this:
\be\label{weak:ui}
\text{the martingale  $W_n$ is uniformly integrable. } 
\ee
Given $h\in\R^d$, this can be guaranteed by taking  $k\ge 3$ and  small enough $\beta>0$ 
(see Lemma 5.3 in \cite{Ras-Sep-12-arxiv}).  
Then $W_n\to W_\infty$ a.s.\ and in $L^1(\P)$,  $W_\infty\ge 0$ and   $\E W_\infty=1$.  
The event $\{W_\infty>0\}$ is a tail event in the product space of environments, 
and hence by Kolmogorov's 0-1 law
we must have $\P(W_\infty>0)=1$. 
This gives us the limiting point-to-level  free energy:  
\be  \label{weak:g} \begin{aligned}
\gpl^\beta(h)  &=  \lim_{n\to\infty} n^{-1} \beta^{-1}\log
 \sum_{x_{0,n}} p(x_{0,n})\,e^{\beta \sum_{k=0}^{n-1} \w_{x_k} + \beta h\cdot x_n}\\
 &=   \lim_{n\to\infty} n^{-1} \beta^{-1}\log  W_n 
+   \beta^{-1}(\lambda(\beta)+\kappa(\beta h) )   \\
&= \beta^{-1}(\lambda(\beta)+\kappa(\beta h) )  . 
\end{aligned} \ee
Decomposition according to the first step (Markov property) gives 
\[  W_n(\w) = e^{-\lambda(\beta)-\kappa(\beta h)} \sum_{z\in\range}  p(z) 
e^{\beta \w_{0} + \beta h\cdot z} W_{n-1}(T_z\w) 
\]
and a passage to the limit  
\be\label{weak:W} 
 W_\infty(\w) = e^{-\lambda(\beta)-\kappa(\beta h)} \sum_{z\in\range}  p(z) 
e^{\beta \w_{0} + \beta h\cdot z} W_{\infty}(T_z\w) \quad \text{$\P$-a.s.}   
\ee  
Combining \eqref{weak:g} and \eqref{weak:W} gives 
\be\label{weak:g3} 
\gpl^\beta(h) =  \beta^{-1}\log \sum_{z\in\range} p(z)e^{\beta \w_0+\beta h\cdot z+\beta F(\w, 0,z)}  \quad \text{$\P$-a.s.} 
\ee
with the gradient 
\be\label{weak:F} 
F(\w, x,y)=\beta^{-1} \log W_{\infty}(T_y\w)  -  \beta^{-1} \log W_{\infty}(T_x\w).  
\ee
In order to check that  $F$ is a centered cocycle it remains to verify that $F(\w, 0, z)$ is integrable
and mean-zero. 
Equation \eqref{weak:g3} gives an upper bound that shows  $\E[F(\w,0,z)^+]<\infty$. 
We argue indirectly that also $\E[F(\w,0,z)^-]<\infty$.  The first   limit
in probability below comes from stationarity. 
\begin{align*}
0\; &\overset{\text{prob}}=\lim_{n\to\infty} \;\bigl[ n^{-1}\beta^{-1}\log W_{\infty}(T_{nz}\w)  -  n^{-1} \beta^{-1}\log W_{\infty}(\w)\bigr]\\
&=  \lim_{n\to\infty} \, n^{-1}\sum_{k=0}^{n-1}  F(T_{kz}\w, 0, z)  . 
\end{align*}  
Since  $\E[F(\w,0,z)^+]<\infty$,  the assumption  $\E[F(\w,0,z)^-]=\infty$  and the ergodic theorem
would force the limit above to $-\infty$.  Hence it must be that $F(\w, 0, z)\in L^1(\P)$.
The limit above then gives $\E[F(\w,0,z)]=0$.   

To summarize, \eqref{weak:g3} shows that the centered cocycle $F$ satisfies  \eqref{eq:Kvar:minbeta} for $V(\w,z)=\w_0+h\cdot z$ for  this  particular  value $(\beta,h)$.      $F$ is the corrector given in  Theorem \ref{thm:minimizer}, from  the cocycle $B$  that is adapted to 
$V_0$ given by  
\[  B(\w, x,y)=  \gpl^\beta(h) e_d\cdot (y-x)- h\cdot (y-x) -F(\w, x,y) \]
with 
$ h(B)= h-   \gpl^\beta(h) e_d. $   A vector  $\tilde h$  satisfies \eqref{h(B)-1}  if and only if 
$\tilde h=h+\alpha e_d$ for some $\alpha\in\R$. The conclusion of the theorem,  that $F$ is a corrector for potential   $ \Vw(\w)= \w_0$ and all such tilts   $\tilde h$, is  obvious because $e_d\cdot x_n=n$ for admissible paths. 
\hfill $\triangle$ 

 \end{example}

\section{Tilt-velocity duality} \label{sec:tilt} 
Section \ref{sec:corr} gave a variational description of  the point-to-level limit 
 in terms of stationary cocycles.    Theorem \ref{th:K-var-pp} below   extends this description to point-to-point
 limits  via tilt-velocity duality.    Tilt-velocity duality is the familiar idea   from large 
 deviation theory that   pinning the path is dual to 
  tilting the energy by an   external field.    In the positive temperature setting this is exactly the 
 convex duality of the quenched large deviation principle for the endpoint of the 
 path (see Remark 4.2  in \cite{Ras-Sep-14}).  


We continue to consider potentials of the form 
$V(\w,z)=\Vw(\w,z)+h\cdot z$   in general dimension $d\in\N$, 
with   $\beta\in(0,\infty]$ and  $\P$  ergodic.   
As above, the point-to-level  limits $\gpl^\beta(h)$ are defined by  \eqref{p2lh-lim}. 
For the point-to-point limits $\gpp^\beta(\xi)$ we use only the $\Vw$-part
of the potential.  So  for $\xi\in\Uset$   define 
  \be
\gpp^\beta(\xi)=  \lim_{n\to\infty} n^{-1} \beta^{-1}\log
 \sum_{\substack{x_{0,n}:\, x_0=0,\\ \quad x_n=\xhat_n(\xi)}} p(x_{0,n})\,e^{\beta \sum_{k=0}^{n-1}\Vw(T_{x_k}\w, \,z_{k+1})  }  
 \label{p2pV_0}\ee
 for $0<\beta<\infty$ and 
\be
\gpp^\infty(\xi)=  \lim_{n\to\infty}  
 \max_{\substack{x_{0,n}:\, x_0=0,\\ \quad x_n=\xhat_n(\xi)}} 
\;  n^{-1}\sum_{k=0}^{n-1}\Vw(T_{x_k}\w, z_{k+1})  . 
 \label{p2pV_0LPP}\ee

  In this context we call the vector $h\in\R^d$   a {\sl tilt}  and   elements $\xi\in\Uset$    {\sl directions} or  {\sl velocities}.
Let us  assume   $\gpl^\beta(0)$    finite.  
Then for $\xi\in\ri\Uset$, 
the a.s.\ point-to-point limits \eqref{p2pV_0}--\eqref{p2pV_0LPP} define nonrandom, bounded,  concave,
continuous
functions  $\gpp^\beta:\ri\Uset\to\R$ for  $\beta\in(0,\infty]$   (see Theorem 2.4 and 2.6 and Remark 2.5 of \cite{Ras-Sep-14}).   
  The results of this section do not touch the
relative boundary of $\Uset$.  Consequently we do not need additional 
 assumptions that  guarantee regularity  of $\gpp^\beta$ up to the
 boundary.     One sufficient  assumption would be  the directed i.i.d.\ $L^{d+\e}$ of Remark \ref{rmk:dir-iid} (Theorem 3.2 of \cite{Ras-Sep-14}). 

\begin{remark}  To illustrate what can go wrong on the boundary of $\Uset$, suppose $z\in\range$ is an extreme point of $\Uset$.  Then the only path from $0$ to $nz$ is $x_k=kz$, and   we get  $\gpp^\beta(z)=\beta^{-1}\log p(z) + \E[\Vw(\w, z)\,\vert\,\cI_z]$ where $\cI_z$ is the $\sigma$-algebra of events invariant under the mapping $T_z$.   This can be random even if $\P$ is assumed ergodic under the full  group $\{T_x\}$.    In general  $\gpp^\beta$ is lower semicontinuous on all of $\Uset$, for a.e.\ fixed $\w$ (Theorem 2.6 of \cite{Ras-Sep-14}). 
    \hfill $\triangle$
 
 \end{remark}

With definitions  \eqref{p2lh-beta}--\eqref{p2lh-lim}  and  \eqref{p2pV_0}--\eqref{p2pV_0LPP},    equation  \eqref{eq:sup p2p:lpp}   becomes
\begin{align}\label{eq:velocity-tilt}
\gpl^\beta(h)=\sup_{\xi\in\Uset}\bigl\{\gpp^\beta(\xi)+h\cdot\xi\bigr\}, \qquad h\in\R^d.
\end{align}
In order to invert this relationship between $\gpl^\beta$ and $\gpp^\beta$ we turn it into a convex (or rather, concave)  duality.   First extend $\gpp^\beta$ outside $\Uset$ via  $\gpp^\beta(\xi)=-\infty$ for
$\xi\in\Uset^c$, and   then replace $\gpp^\beta$ with its upper semicontinuous
regularization  
$\bargpp^\beta(\xi)=  \gpp^\beta(\xi)\vee \varlimsup_{\zeta\to\xi}\gpp^\beta(\zeta) $.  
Now \eqref{eq:velocity-tilt}  extends to 
\[    \gpl^\beta(h)=\sup_{\xi\in\R^d}\{\bargpp^\beta(\xi)+h\cdot\xi\}, \qquad h\in\R^d,  \]
which   standard convex duality  \cite{rock-ca} inverts  to  
\[ \bargpp^\beta(\xi)=\inf_{h\in\R^d}\{\gpl^\beta(h)-h\cdot\xi\}, \qquad \xi\in\R^d. \]
 By the continuity of $\gpp^\beta$ on 
   $\ri\Uset$,  the last display  gives 
\begin{align}\label{eq:tilt-velocity}
\gpp^\beta(\xi)=\inf_{h\in\R^d}\big\{\gpl^\beta(h)-h\cdot\xi\big\} \quad \text{ for $\xi\in\ri\Uset$.}
\end{align}

Equations   \eqref{eq:velocity-tilt} and \eqref{eq:tilt-velocity} suggest the next definition, and then Lemma \ref{lm:zeta->h} answers part of the natural next question.  



 \begin{definition}\label{def:h-zeta}
At a fixed $\beta\in(0,\infty]$, we say that  tilt $h\in\R^d$  and  velocity $\xi\in\ri\Uset$ are {\rm dual} to each other if 
 \begin{align}  \gpl^\beta(h)=\gpp^\beta(\xi)+h\cdot\xi.\label{h-zeta}\end{align}
 \end{definition}
 

 \begin{lemma}\label{lm:zeta->h}   Fix $\beta\in(0,\infty]$.
Assume $\P$ ergodic,  $\Vw\in\cL$ and $\gpl^\beta(0)<\infty$. 
Then every $\xi\in\ri\Uset$ has a dual $h\in\R^d$. 
Furthermore, if $h$ is dual to $\xi\in\Uset$ and $h'$ is such that 
\begin{align}
(h-h')\cdot(z-z')=0\text{ for all $z,z'\in\range$}\label{eq:h-unique}
\end{align}
then $h'$ is also dual to $\xi$.
\end{lemma}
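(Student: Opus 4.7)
The plan is to reduce the claim to standard concave analysis applied to $g:=\gpp^\beta$, regarded as a concave function on $\Uset$ and extended by $-\infty$ off $\Uset$, using the Legendre-type identity \eqref{eq:velocity-tilt} as the bridge between the point-to-point and point-to-level limits. Abbreviate $L:=\Span(\range-\range)$, the linear subspace parallel to the affine hull $\aff\Uset$. By the remarks following Theorem \ref{th:p2p}, $g$ is real-valued, bounded, concave, and continuous on $\ri\Uset$, hence a proper concave function on $\R^d$ with effective domain $\Uset$.

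For the existence half, first I would use \eqref{eq:velocity-tilt} to rewrite the duality identity \eqref{h-zeta} as the assertion that the supremum $\sup_{\zeta\in\Uset}\{g(\zeta)+h\cdot\zeta\}$ is attained at $\zeta=\xi$, equivalently
\begin{equation*}
g(\zeta)-g(\xi)\le h\cdot(\xi-\zeta)\qquad\text{for every }\zeta\in\Uset.
\end{equation*}
Since $\zeta-\xi\in L$, only the orthogonal projection of $h$ onto $L$ enters this inequality, and the condition is simply that $-h$ is a supergradient of $g$ at $\xi$ in the sense of concave analysis on $\aff\Uset$. Now I would invoke the standard convex-analytic fact that a proper concave function is superdifferentiable at every point of the relative interior of its effective domain. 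Applied to $g$ at $\xi\in\ri\Uset$, this produces a vector $h_0\in L$ with $g(\zeta)-g(\xi)\le -h_0\cdot(\zeta-\xi)$ for all $\zeta\in\Uset$. Taking $h=h_0\in\R^d$ via the inclusion $L\subset\R^d$ then fulfills the displayed inequality and hence \eqref{h-zeta}; any other $h\in\R^d$ whose projection on $L$ equals $h_0$ works equally well.

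For the second assertion, condition \eqref{eq:h-unique} says exactly that $h-h'\perp L$. Hence the affine function $\zeta\mapsto(h-h')\cdot\zeta$ is constant on $\aff\Uset\supset\Uset$, with common value $c:=(h-h')\cdot\xi$. From \eqref{eq:velocity-tilt} one reads off $\gpl^\beta(h')=\gpl^\beta(h)-c$, while the right side of \eqref{h-zeta} transforms as $\gpp^\beta(\xi)+h'\cdot\xi=\gpp^\beta(\xi)+h\cdot\xi-c$. Subtracting $c$ from both sides of \eqref{h-zeta} for $h$ therefore yields the corresponding identity for $h'$.

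No serious obstacle is expected: the argument is essentially convex-analytic bookkeeping. The only point that needs verification is the proper-concavity and relative-interior hypotheses of the superdifferentiability result, but properness and boundedness of $g$ follow from $\gpl^\beta(0)<\infty$ combined with \eqref{eq:sup p2p:lpp}, and $\xi\in\ri\Uset$ is given by assumption.
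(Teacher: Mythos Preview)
Your argument is correct. The second claim is handled identically to the paper. For the existence claim your route is genuinely different from the paper's, and it is worth recording the contrast.

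The paper works on the \emph{dual} side, via \eqref{eq:tilt-velocity}: it shows that the infimum $\inf_{h}\{\gpl^\beta(h)-h\cdot\xi\}$ is attained. Concretely, it first uses the invariance \eqref{eq:h-unique} to restrict to $h$ in the subspace $\cV=\Span(\range-z_0)$, then invokes the elementary lower bound $\gpl^\beta(h)\ge\E[V_0]+h\cdot z-\beta^{-1}\log|\range|$ (obtained by sending the path along the single step $z$) together with $\gpp^\beta(\xi)\le\gpl^\beta(0)$ to confine the relevant $h$ to a compact set, and concludes by continuity of $\gpl^\beta$.

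You instead work on the \emph{primal} side, via \eqref{eq:velocity-tilt}: duality of $h$ and $\xi$ is the statement that $\xi$ maximizes $\zeta\mapsto \gpp^\beta(\zeta)+h\cdot\zeta$, i.e.\ that $-h$ is a supergradient of $\gpp^\beta$ at $\xi$; existence then follows from the black-box fact that a proper concave function is superdifferentiable at every relative-interior point of its domain. This is shorter and conceptually clean, at the cost of citing a standard convex-analysis theorem, whereas the paper's argument is entirely self-contained. One minor imprecision: in the general ergodic setting the paper only guarantees that $\gpp^\beta$ is nonrandom and concave on $\ri\Uset$ (boundary values may be random and concavity on the full $\Uset$ is not asserted), so the effective domain of your extended $g$ should be taken as $\ri\Uset$ rather than $\Uset$. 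This is harmless: $\xi\in\ri\Uset$ still lies in the relative interior of the effective domain, and the supremum in \eqref{eq:velocity-tilt} over $\Uset$ agrees with that over $\ri\Uset$ by the lower semicontinuity recorded after Theorem~\ref{th:p2p}.
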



\begin{proof}
We start with the proof of the second claim. If \eqref{eq:h-unique} holds then directly from \eqref{p2lh-beta}--\eqref{p2lh-lim},  $\gpl^\beta(h')=\gpl^\beta(h)+(h'-h)\cdot z$ for all $z\in\range$. Hence, $\gpl^\beta(h')-h'\cdot\xi=\gpl^\beta(h)-h\cdot\xi$ and 
$h$ is dual to $\xi$ if and only if $h'$ is.

The equality above also implies that any $h$ in \eqref{eq:tilt-velocity} can be replaced by any $h'$ satisfying \eqref{eq:h-unique}.
Fix  $z_0\in\range$.     One way to satisfy  \eqref{eq:h-unique} is to let  
  $h'$ be  the  orthogonal projection of $h$  onto the linear span $\cV$ of $\range-z_0$.  
 Consequently we can restrict the infimum in \eqref{eq:tilt-velocity} to  
  $h\in\cV$. (This can be all of $\R^d$.)

For any $z\in\range$, $h\in\R^d$, and $\beta\in(0,\infty]$,   
\[\gpl^\beta(h)\ge\E[\Vw(\w,z)]+h\cdot z +\beta^{-1}\log p(z).\]
To see this, for $z\ne 0$ consider the path $x_k=kz$ and use the ergodic theorem. For 
$z=0$ consider a path that finds $\Vw(T_x\w, 0)$ within $\e$ of $\esssup\Vw(\cdot\,,0)$ and 
stays there.

Furthermore, \eqref{eq:sup p2p:lpp} gives $\gpp^\beta(\xi)\le \gpl^\beta(0).$
Consequently we can restrict the infimum in \eqref{eq:tilt-velocity} to $h\in\cV$   that satisfy
\begin{align*}
\h\cdot(z-\xi)\le \gpl^\beta(0)+1-\E[\Vw(\w,z)] -\beta^{-1}\log p(z)  \, \le\, c 
\end{align*}
for all $z\in\range$ and a constant $c$.
Convex combinations  over $z$ lead to 
$\h\cdot(\eta-\xi)\le c$ for all $\eta\in\Uset$.  
  By the definition of relative interior,    $\xi\in\ri\Uset$  implies that for some $\e>0$, $\zeta\in\Uset$ for all $\zeta\in\aff\Uset$   such that $\abs{\xi-\zeta}\le\e$. 
Since $h\in\cV$,  
$\eta=\xi+\e\abs{h}^{-1} h$ lies in $\aff\Uset$ and then by choice of $\e$ also in $\Uset$.  We conclude that   $\e\abs{h}\le c$ and thereby 
  that
 the infimum in \eqref{eq:tilt-velocity} can be restricted to a compact set. 
 Continuity of $\gpl^\beta$ implies that the infimum 
  is achieved and existence of an $h$ dual to $\xi$ has been established.\hfill\hfill\qed
 %
\end{proof}


With these preliminaries  we  extend Theorem \ref{th:K-var}  to the point-to-point case.   Recall Definition \ref{def:cK} of the space $\cK$ of stationary $L^1$ cocycles.  

\begin{theorem}\label{th:K-var-pp}
Assume  $\Vw\in\cL$, $\P$ ergodic and $\gpl^\beta(0)$ finite. Then we have these variational formulas for $\xi\in\ri\Uset$.  
\be	\gpp^\beta(\xi)\;=\;\inf_{B\in\cK} \,\P\text{-}\esssup_\w\;  \beta^{-1}\log \sum_{z\in\range} p(z)e^{\beta \Vw(\w,z)-\beta B(\w, 0,z)-\beta h(B)\cdot \xi} \label{eq:K-var-pp}  \ee
for $0<\beta<\infty$ and 
	\begin{align}
	\gpp^\infty(\xi)&\;=\;\inf_{B\in\cK}\, \P\text{-}\esssup_\w\;  \max_{z\in\range}\, \{\Vw(\w,z)-   B(\w, 0,z)-  h(B)\cdot \xi\}.  \label{eq:g:K-var-pp}
	\end{align}
The infimum in \eqref{eq:K-var-pp}--\eqref{eq:g:K-var-pp} can be restricted to $B\in\cK$  such that $h(B)$ is dual to $\xi$.
For each $\xi\in\ri\Uset$ and 	$0<\beta\le\infty$, there exists   a minimizing $B\in\cK$  such that $h(B)$ is dual to $\xi$.   
 \end{theorem}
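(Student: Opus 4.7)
The strategy is to derive \eqref{eq:K-var-pp} and \eqref{eq:g:K-var-pp} by composing the tilt-velocity duality \eqref{eq:tilt-velocity} with the point-to-level formulas of Theorem \ref{th:K-var}, and then re-parameterizing the resulting double infimum using the correspondence between $\cK$ and $\cK_0\times\R^d$ encoded in \eqref{EB}--\eqref{FF}.

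First I fix $\xi\in\ri\Uset$ and $\beta\in(0,\infty]$ and start from
$$\gpp^\beta(\xi)=\inf_{h\in\R^d}\bigl\{\gpl^\beta(h)-h\cdot\xi\bigr\},$$
which is \eqref{eq:tilt-velocity}. Substituting the cocycle formula \eqref{eq:Lambda:K-var} (resp.\ \eqref{eq:g:K-var}) for $\gpl^\beta(h)$ and absorbing the constant $-h\cdot\xi$ inside the logarithm (resp.\ the maximum) expresses $\gpp^\beta(\xi)$ as a double infimum over $(h,F)\in\R^d\times\cK_0$ of the $\P$-essential supremum of
$$\beta^{-1}\log\sum_{z\in\range}\abs{\range}^{-1}e^{\beta V_0(\w)+\beta h\cdot(z-\xi)+\beta F(\w,0,z)}$$
(and analogously for $\beta=\infty$).

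Next I implement the change of variables. The map $(F,h)\mapsto B$ with $B(\w,x,y):=h\cdot(x-y)-F(\w,x,y)$ sends $\cK_0\times\R^d$ into $\cK$ with $h(B)=h$; conversely, every $B\in\cK$ arises this way by \eqref{EB}--\eqref{FF}. Under this substitution $\beta h\cdot(z-\xi)+\beta F(\w,0,z)=-\beta B(\w,0,z)-\beta h(B)\cdot\xi$, so the exponent collapses to $\beta V_0(\w)-\beta B(\w,0,z)-\beta h(B)\cdot\xi$ and the double infimum becomes a single infimum over $B\in\cK$, yielding \eqref{eq:K-var-pp}. The zero-temperature identity \eqref{eq:g:K-var-pp} follows identically from \eqref{eq:g:K-var}. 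For the existence of a minimizer, I appeal to Lemma \ref{lm:zeta->h} to produce $h^*\in\R^d$ dual to $\xi$, so that $\gpp^\beta(\xi)=\gpl^\beta(h^*)-h^*\cdot\xi$; Theorem \ref{th:K-var} then supplies an $F^*\in\cK_0$ attaining the infimum in $\gpl^\beta(h^*)$, and its image $B^*\in\cK$ under the bijection realizes the infimum in \eqref{eq:K-var-pp} or \eqref{eq:g:K-var-pp}.

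Since all the hard analysis has already been done in Theorem \ref{th:K-var} and Lemma \ref{lm:zeta->h}, the argument is essentially assembly and bookkeeping. The one point that warrants a sentence of explanation is the well-definedness of the $(F,h)\leftrightarrow B$ correspondence when $\range$ fails to span $\R^d$: the vector $h(B)$ is then non-unique, but only the quantities $h(B)\cdot z$ and $h(B)\cdot\xi$ enter the minimized expression, and both are uniquely determined since $z\in\range$ and $\xi\in\ri\Uset$ lie in the affine hull of $\range$. I do not expect any genuine obstacle beyond this minor verification.
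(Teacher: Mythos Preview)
Your proposal is correct and follows essentially the same route as the paper: both arguments combine the tilt--velocity duality \eqref{eq:tilt-velocity} with the point-to-level formula of Theorem~\ref{th:K-var}, then re-index the double infimum over $(h,F)\in\R^d\times\cK_0$ as a single infimum over $B\in\cK$ via the correspondence \eqref{EB}--\eqref{FF}; existence of a minimizer is obtained in both by invoking Lemma~\ref{lm:zeta->h} and the existence part of Theorem~\ref{th:K-var}. The only cosmetic difference is that the paper starts from the right-hand side of \eqref{eq:K-var-pp} and unwinds to $\gpp^\beta(\xi)$, whereas you start from $\gpp^\beta(\xi)$ and wind forward, but the steps are identical.
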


\begin{proof}   We write the proof for $0<\beta<\infty$, the case $\beta=\infty$ 
being similar enough.     
The right-hand side of \eqref{eq:K-var-pp} equals 
\begin{align*}
&\inf_h \Bigl\{ \, \inf_{B: h(B)=h} \P\text{-}\esssup_\w\;  \beta^{-1}\log \sum_{z\in\range} p(z)e^{\beta \Vw(\w,z)-\beta B(\w, 0,z)}   -  h\cdot \xi \Bigr\} \\
&=\inf_h \Bigl\{ \, \inf_{F\in\cK_0} \P\text{-}\esssup_\w\;  \beta^{-1}\log \sum_{z\in\range} p(z)e^{\beta \Vw(\w,z)+\beta h\cdot  z +\beta F(\w, 0,z)}   -  h\cdot \xi \Bigr\} \\
&=\inf_h \{ \, \gpl^\beta(h) -  h\cdot \xi\} = \gpp^\beta(\xi).  
\end{align*}
The middle equality is true because $B$ is a cocycle with $h(B)=h$ if and only if 
$F(\w,0,z)=- B(\w, 0,z)-h\cdot z$ is a centered cocycle.  

For the existence, use   Lemma \ref{lm:zeta->h} to pick $h$ dual to $\xi$, and then 
Theorem \ref{th:K-var} to find a minimizing $F\in\cK_0$ for $\gpl^\beta(h)$.  
Then $B(\w,0,z)=-h\cdot z-F(\w,0,z)$ is a minimizer for $\gpp^\beta(\xi)$ and $h(B)=h$. \hfill\hfill\qed
\end{proof}

  Combining Theorems  \ref{thm:minimizer} and \ref{th:K-var-pp} with \eqref{h-zeta}  gives: 
  
 \begin{corollary} \label{cor:h-xi}  Assume  $\Vw\in\cL$, $\P$ ergodic and $\gpl^\beta(0)$ finite.  Let $\beta\in(0,\infty]$ and  $\xi\in\ri\Uset$.  Suppose there exists $B\in\cK$ adapted to $V_0$ {\rm(}Definition \ref{def:bdry-model}{\rm)} and such that $h(B)$ is dual to $\xi$.    Then $B$ minimizes  in \eqref{eq:K-var-pp} or \eqref{eq:g:K-var-pp} without the essential supremum over $\w$ and 
  \begin{align}\label{eq:p2p=B.xi}
\gpp^\beta(\xi)=-h(B)\cdot\xi . 
\end{align}
 \end{corollary}   

 If $\nabla\gpp^\beta$ exists at $\xi$,   the duality of $h(B)$ and $\xi$ implies  that 
 \begin{align}\label{eq:grad g=-h}
\nabla\gpp^\beta(\xi)=-h(B).
\end{align}
  In some situations $\Uset$ has  empty interior but $\gpp^\beta$ extends as a homogeneous function to an open neighborhood of $\Uset$, and \eqref{eq:grad g=-h} makes sense for the extended function.    Such is the case for example  when $\range=\{e_1,\dotsc,e_d\}$.   In  the 1+1 dimensional exactly solvable models discussed 
 in Section \ref{sec:lg+exp}  below,   for each $\xi\in\ri\Uset$ there exists   a cocycle $B=B^\xi$ that satisfies  \eqref{eq:p2p=B.xi} and \eqref {eq:grad g=-h}.   Modulo some regularity issues,  this is the case also for  the 1+1 dimensional 
 corner growth model with general weights   \cite{geor-rass-sepp-lppbuse}.

\section{Cocycles from Busemann functions}  \label{sec:bus} 

The solution    approach advanced in  this paper for the cocycle variational formulas 
 relies  on cocycles that are adapted to 
$\Vw$ (Definition \ref{def:bdry-model}).  
This section   describes  how to obtain  such cocycles    from limits of gradients of free energy, called 
  {\sl Busemann functions}, provided such limits exist.    Busemann functions  come in two variants,  point-to-point and point-to-level.  These are  treated in  the next two theorems.    Proofs of the theorems are at the end of the section.  

\smallskip 
  
We assume now that  every admissible path between two given
points  $x$ and $y$ has the same number of steps.  This prevents loops.  The natural examples are
$\range=\{e_1,e_2,\dotsc, e_d\}$  and   
$\range=\{ (z',1):  z^\prime\in\range^\prime\}$ for some finite $\range^\prime\subset\Z^{d-1}$.    For $x,y\in\Z^d$ such that $y$ can be reached from $x$ 
  define the free energy  
\be\label{Gxy5}  \Gpp^\beta_{x,y} \;= \; 
\beta^{-1}\log\sum_{\substack{n\ge1\\x_{0,n}:\,x_0=x,\, x_n=y}}p(x_{0,n})\,e^{\beta\sum_{k=0}^{n-1}\Vw(T_{x_k}\w, \,z_{k+1})}  \quad\text{for $0<\beta<\infty$}\ee
and the last-passage time 
\be\label{Gxy6} \Gpp^\infty_{x,y} \;= \;\max_{\substack{n\ge1\\x_{0,n}: \,x_0=x,\, x_n=y}}\;\;\sum_{k=0}^{n-1}\Vw(T_{x_k}\w, z_{k+1}).
 \ee
The sum and the 
maximum are taken  over all admissible paths from $x$ to $y$, and then there is a unique  $n$, namely the number of steps   from $x$ to $y$.    

Recall definition \eqref{eq:def:xhat} of the path $\xhat_n(\xi)$. 
   A point-to-point    Busemann function in direction $\xi\in\ri\Uset$ is    defined by 
 \be \Bpp^\xi(x,y)=\lim_{n\to\infty}\bigl[ \Gpp^\beta_{x,\,\xhat_{n}(\xi)+z}-\Gpp^\beta_{y,\,\xhat_{n}(\xi)+z}\,\bigr] ,  \quad x,y\in\cG, \ z\in\range\cup\{0\}, 
\label{bus3}\ee
provided that the limit exists $\P$-almost surely  and does not depend on $z$.     The extra perturbation by $z$ on the right-hand side  will be used to  establish stationarity of the limit.  
$\beta$ is now fixed and we omit  the dependence of $\Bpp^\xi$ on $\beta$ from the notation.   
To ensure that  paths to $\xhat_n(\xi)$  from both $x$ and $y$   exist in \eqref{bus3},  in the definition  \eqref{eq:def:xhat}
 of $\xhat_n(\xi)$  pick   $\alpha_z(\xi)>0$ for all $z\in\range$.  (For $\xi\in\ri\Uset$ this is possible by 
Theorem 6.4 in \cite{rock-ca}.) 
Then, any point $x\in\gr$ can reach $\xhat_n(\xi)$ with steps in $\range$   for  large enough $n$.  




\begin{theorem}\label{th:Bus=grad(a)}
Let $\beta\in(0,\infty]$, $\Vw\in\cL$, $\P$ ergodic and $\gpl^\beta(0)$ finite.  
 Assume that every admissible path between two given
points  $x$ and $y$ has the same number of steps.  

 Fix 
  $\xi\in\ri\Uset$  and choose   $\alpha_z(\xi)>0$ for each  $z\in\range$ in \eqref{eq:def:xhat}.  
Assume that  for all $x,y\in\gr$ and $\P$-a.e.\ $\w$,  the limits \eqref{bus3}
 exist for $z\in\range\cup\{0\}$ and   are independent of $z$.  
Then $\Bpp^\xi(x,y)$ is a stationary cocycle that is adapted to 
$\Vw$ in the sense of Definition \ref{def:bdry-model}.   

Assume additionally 
	\begin{align}\label{Gppliminf}
		\varlimsup_{n\to\infty} n^{-1}\E[\Gpp^\beta_{0,\,\xhat_n(\xi)}]\le\gpp^\beta(\xi).
	\end{align}
Then $\Bpp^\xi(x,y)\in L^1(\P)$ $\forall x,y\in\gr$, $h(\Bpp^\xi)$ is dual to $\xi$ {\rm(}Definition \ref{def:h-zeta}{\rm)}, and  $\gpp^\beta(\xi)=-h(\Bpp^\xi)\cdot\xi$.  \end{theorem}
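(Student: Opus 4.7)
The cocycle identity $B^\xi(x,y)+B^\xi(y,z)=B^\xi(x,z)$ is immediate by telescoping in the defining limit \eqref{bus3}. For stationarity I would exploit the hypothesis that the limit is independent of the perturbation $z\in\range\cup\{0\}$: given $z_0\in\range$, choosing $z=z_0$ in the defining limit of $B^\xi(\w,z_0+x,z_0+y)$ and applying the shift identity $\Gpp^\beta_{a,b}(\w)=\Gpp^\beta_{a-z_0,b-z_0}(T_{z_0}\w)$ reduces the right-hand side to the $(z=0)$ form of $B^\xi(T_{z_0}\w,x,y)$; iteration and inversion extend stationarity from $\range$-shifts to arbitrary $z\in\gr$. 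Recovery of $V_0$ then follows from the one-step partition-function recursion $\Zpp^\beta_{0,y}(\w)=e^{\beta V_0(\w)}\sum_{z\in\range}\abs{\range}^{-1}\Zpp^\beta_{z,y}(\w)$ (and its $\max$-analog at $\beta=\infty$) applied with $y=\xhat_n(\xi)$: dividing by $e^{\beta\Gpp^\beta_{0,\xhat_n(\xi)}}$ and sending $n\to\infty$ converts each ratio $\Gpp^\beta_{0,\xhat_n(\xi)}-\Gpp^\beta_{z,\xhat_n(\xi)}$ into $B^\xi(\w,0,z)$ and yields \eqref{VBbeta} or \eqref{VB}.

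Under the additional hypothesis \eqref{Gppliminf}, the recovery identity gives $V_0(\w)\le B^\xi(\w,0,z)+\beta^{-1}\log\abs{\range}$ (with equality at the minimizing $z$ if $\beta=\infty$), so $B^\xi(0,z)^-\in L^1(\P)$ and $\E[B^\xi(0,z)]\in(-\infty,\infty]$. To obtain an upper bound I would first establish the pointwise inequality $\Gpp^\beta_{0,\xhat_n(\xi)}\le B^\xi(0,\xhat_n(\xi))$ via a Markov-chain (Doob-transform) argument: the functions $p_z(\w)=\abs{\range}^{-1}\exp(\beta V_0(\w)-\beta B^\xi(\w,0,z))$ form a probability kernel on $\range$ (this is the recovery condition read in reverse), and iterating the partition-function recursion $n$ times identifies $\exp(\beta(\Gpp^\beta-B^\xi)(0,\xhat_n(\xi)))$ with the probability that the walk with kernel $p_z$ hits $\xhat_n(\xi)$ at time $n$, hence is at most $1$. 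The hypothesis together with Fatou's lemma (the integrability of $V_0$ furnishing an $L^1$ lower comparator for $\Gpp^\beta/n$) then gives the expectation convergence $\lim_n n^{-1}\E[\Gpp^\beta_{0,\xhat_n(\xi)}]=\gpp^\beta(\xi)$. Combining $\Gpp^\beta\le B^\xi$ with additivity $\E[B^\xi(0,\xhat_n(\xi))]=\sum_k\E[B^\xi(0,z_{k+1})]$ along a structured decomposition of $\xhat_n(\xi)$ into $\range$-steps lifts this to $\E[B^\xi(0,z)]<\infty$ for every $z\in\range$ with $\alpha_z(\xi)>0$. Hence $B^\xi\in L^1(\P)$ and $h(B^\xi)$ is defined by $\E[B^\xi(0,z)]=-h(B^\xi)\cdot z$.

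Once $B^\xi$ is an $L^1$ cocycle recovering $V_0$, Theorem \ref{thm:minimizer}(i) gives $\gpl^\beta(h(B^\xi))=0$, and tilt-velocity duality \eqref{eq:tilt-velocity} at once yields the easy direction $\gpp^\beta(\xi)\le -h(B^\xi)\cdot\xi$. The matching lower bound is the principal obstacle. My plan is to combine the a.s.\ shape theorem $n^{-1}\Gpp^\beta_{0,\xhat_n(\xi)}\to\gpp^\beta(\xi)$ with the cocycle ergodic theorem $n^{-1}B^\xi(0,\xhat_n(\xi))\to -h(B^\xi)\cdot\xi$ a.s.\ and the expectation convergences obtained in the previous paragraph. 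The nonnegative gap $n^{-1}(B^\xi-\Gpp^\beta)(0,\xhat_n(\xi))$ has an a.s.\ limit $-h(B^\xi)\cdot\xi-\gpp^\beta(\xi)\ge 0$, and the technical heart of the proof is showing this limit vanishes. I would revisit the Markov-chain identification: the hitting-probability representation of $\exp(\beta(\Gpp^\beta-B^\xi)(0,\xhat_n(\xi)))$ must decay only sub-exponentially, which amounts to the kernel $p_z$ having drift exactly $\xi$ — a reflection of the construction of $B^\xi$ as the $\xi$-direction Busemann function and where the assumption $\xi\in\ri\Uset$ is used. Once the gap is closed, $\gpp^\beta(\xi)=-h(B^\xi)\cdot\xi$ and the duality of $h(B^\xi)$ to $\xi$ follows immediately from \eqref{h-zeta}.
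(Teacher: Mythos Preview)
Your first paragraph (cocycle by telescoping, stationarity from the $z$-perturbation, recovery from the one-step recursion) is correct and matches the paper.

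The gap is in the integrability step, and it propagates to the duality step. Your Doob-transform observation is correct: with $p_z(\w)=\abs{\range}^{-1}e^{\beta V_0(\w)-\beta B^\xi(\w,0,z)}$ a probability kernel, one gets the pointwise inequality $\Gpp^\beta_{0,\xhat_n(\xi)}\le B^\xi(0,\xhat_n(\xi))$. But this bounds $B^\xi$ from \emph{below}, not above. Combined with $\varliminf n^{-1}\E[\Gpp^\beta_{0,\xhat_n(\xi)}]\le\gpp^\beta(\xi)$ it yields nothing about $\E[B^\xi(0,z)]^+$; the sentence ``combining $\Gpp^\beta\le B^\xi$ with additivity \dots\ lifts this to $\E[B^\xi(0,z)]<\infty$'' does not go through. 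You already have $B^\xi(0,z)^-\in L^1$ from recovery; what is missing is an \emph{upper} bound on the expectation, and your inequality points the wrong way.

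The paper gets both the integrability and what you call the ``principal obstacle'' in one stroke, by working with Ces\`aro averages of the prelimit Busemann increments. For fixed $m$ one looks at $\frac{1}{n}\sum_{k=m}^n\bigl(\Gpp^\beta_{0,\xhat_{k+\ell}(\xi)}-\Gpp^\beta_{\xhat_m(\xi),\xhat_{k+\ell}(\xi)}\bigr)$. Superadditivity gives the lower bound $\ge\Gpp^\beta_{0,\xhat_m(\xi)}$, which makes the negative parts uniformly integrable. A short path-adjustment and shift-invariance turn the expectation into a telescoping sum, so that the $\varliminf$ of the averaged expectations is bounded above by $(m+\ell)\gpp^\beta(\xi)+C$ via hypothesis \eqref{Gppliminf}. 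A Fatou-type lemma (Lemma~\ref{app:lm1}) then gives $B^\xi(0,\xhat_m(\xi))\in L^1$ \emph{and} $\E[B^\xi(0,\xhat_m(\xi))]\le(m+\ell)\gpp^\beta(\xi)+C$. Dividing by $m$ and letting $m\to\infty$ yields $-h(B^\xi)\cdot\xi\le\gpp^\beta(\xi)$ directly; the reverse inequality is the easy one from Theorem~\ref{thm:minimizer} and \eqref{eq:tilt-velocity}, exactly as you say. So the inequality you single out as the hard direction is in fact a free byproduct of the correct integrability argument, and your proposed route through sub-exponential hitting probabilities of the $p_z$-walk (which would require knowing its drift is $\xi$, essentially the conclusion itself) is not needed.
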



The point  of  the theorem is that  the Busemann function furnishes correctors for the variational formulas.  Once the assumptions of Theorem \ref{th:Bus=grad(a)}  are satisfied,   (i)   Theorem \ref{thm:minimizer}  implies that 
$F(x,y)=h(\Bpp^\xi)\cdot (x-y)-  \Bpp^\xi(x,y)$ is a corrector  for $\gpl^\beta(h)$ for any $h$ such that $h-h(\Bpp^\xi)\perp\aff\range$, and (ii) 
depending on $\beta$,  $\Bpp^\xi$ minimizes either \eqref{eq:K-var-pp} or \eqref{eq:g:K-var-pp} without the $\P$-essential supremum. 


\medskip 

In the  point-to-level  case  the free energy and last-passage time 
  for paths of length $n$ started at $x$ are defined by a shift 
  $\Gpl^\beta_{x,(n)}(h)(\w)=\Gpl^\beta_{0,(n)}(h)(T_x\w)$. 
 Point-to-level   Busemann functions are defined by 
   \be \Bpl^h(0,z)=\lim_{n\to\infty}\bigl[\Gpl^\beta_{0,(n)}(h)-\Gpl^\beta_{z,(n-1)}(h)\bigr], \qquad z\in\range, 
\label{buse4}\ee
omitting again the $\beta$-dependence from the notation.

\begin{theorem}\label{th:Bus=grad(b)}   Let $\beta\in(0,\infty]$, $\Vw\in\cL$, $\P$ ergodic and $\gpl^\beta(0)$ finite.     Assume that every admissible path between any two given
points  $x$ and $y$ has the same number of steps. 

 Fix  $h\in\R^d$.  Assume  the $\P$-a.s.\  limits \eqref{buse4} 
 exist for all $z\in\range$.  
Then we can extend $\{\Bpl^h(0,z)\}_{z\in\range}$ to a stationary 
cocycle $\{\Bpl^h(x,y)\}_{x,y\in\gr}$, and 
cocycle  $\Bpl^h(x,y)-h\cdot(y-x)$   is adapted to 
$\Vw$  in the sense of Definition \ref{def:bdry-model}. 

Assume additionally 
	\begin{align}\label{Gplliminf}
		\varliminf_{n\to\infty} n^{-1}\E[\Gpl^\beta_{0,(n)}(h)]\le\gpl^\beta(h). 
	\end{align}
Then  $\Bpl^h(x,y)\in L^1(\P)$ for $x,y\in\gr$.   $F(\w,x,y)=h(\Bpl^h)\cdot (x-y)-  \Bpl^h(\w,x,y)$ is a minimizer in  \eqref{eq:Lambda:K-var} for $\gpl^\beta(h)$  if  $0<\beta<\infty$ and in  \eqref{eq:g:K-var} if  $\beta=\infty$. 
\end{theorem}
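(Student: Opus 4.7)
The plan follows the four-step strategy that presumably also underlies the point-to-point Theorem \ref{th:Bus=grad(a)}: (i) extend the one-step limits $\Bpl^h(0,z)$ to a stationary cocycle on $\gr\times\gr$, (ii) verify the recovery relation via the one-step decomposition of the partition function, (iii) establish $L^1(\P)$-integrability through a Fatou argument anchored by hypothesis \eqref{Gplliminf}, and (iv) apply Theorem \ref{thm:minimizer}.

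Steps (i) and (ii) are direct. Using the shift identity $\Gpl^\beta_{x,(n)}(h)(\w)=\Gpl^\beta_{0,(n)}(h)(T_x\w)$, I set $\Bpl^h(x,x+z):=\Bpl^h(0,z)\circ T_x$, which coincides with the Busemann-type limit $\lim_n[\Gpl^\beta_{x,(n)}(h)-\Gpl^\beta_{x+z,(n-1)}(h)]$, and then extend to arbitrary admissible pairs by telescoping along any path. Path-independence is where the directed-path hypothesis enters: any two equal-length paths between common endpoints differ by a sequence of neighbor swaps $(z,z')\leftrightarrow(z',z)$, and each swap telescopes via the shift identity to the same two-step difference $\lim_n[\Gpl^\beta_{x,(n)}(h)-\Gpl^\beta_{x+z+z',(n-2)}(h)]$; stationarity and the cocycle property are then automatic. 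For the recovery, the one-step decomposition
\[
\Zpl^\beta_{0,(n)}(h)=e^{\beta\Vw(\w)}\sum_{z\in\range}|\range|^{-1}e^{\beta h\cdot z}\,\Zpl^\beta_{z,(n-1)}(h),
\]
with the $\max$-analogue for $\beta=\infty$, divided by $\Zpl^\beta_{0,(n)}(h)$ and passed to the limit using the a.s.\ existence of $\Bpl^h(0,z)$, yields
\[
1=e^{\beta\Vw}\sum_{z\in\range}|\range|^{-1}e^{-\beta(\Bpl^h(0,z)-h\cdot z)},
\]
which is precisely the recovery \eqref{VBbeta} for the cocycle $\widetilde B(x,y):=\Bpl^h(x,y)-h\cdot(y-x)$.

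For integrability, keeping only the $z$-th summand in the one-step decomposition gives the pointwise lower bound $D^z_n:=\Gpl^\beta_{0,(n)}(h)-\Gpl^\beta_{z,(n-1)}(h)\ge\Vw+h\cdot z-\beta^{-1}\log|\range|$, hence $\Bpl^h(0,z)^-$ is dominated by an integrable function. For the positive part I apply Fatou's lemma to the nonnegative sequence $D^z_n-\Vw-h\cdot z+\beta^{-1}\log|\range|$: with $a_n:=\E[\Gpl^\beta_{0,(n)}(h)]$, the stationarity identity $\E[\Gpl^\beta_{z,(n-1)}(h)]=a_{n-1}$ reduces this to $\E[\Bpl^h(0,z)]\le\varliminf_n(a_n-a_{n-1})$. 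The assumption \eqref{Gplliminf} together with a matching Fatou lower bound $\varliminf_n a_n/n\ge\gpl^\beta(h)$ (obtained by comparing $\Gpl^\beta_{0,(n)}(h)/n$ to a fixed-path ergodic average) forces $\lim_n a_n/n=\gpl^\beta(h)$, and the elementary Cesaro-type inequality $\varliminf_n(a_n-a_{n-1})\le\lim_n a_n/n$ then bounds $\E[\Bpl^h(0,z)]\le\gpl^\beta(h)<\infty$, completing $L^1(\P)$-integrability.

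For the minimizer claim, a direct calculation gives $h(\widetilde B)=h+h(\Bpl^h)$ and identifies $F_{\widetilde B}(x,y)=h(\widetilde B)\cdot(x-y)-\widetilde B(x,y)$ with the $F$ defined in the theorem. Applying Theorem \ref{thm:minimizer}(ii) to $\widetilde B$ with tilt $h$ requires the orthogonality \eqref{h(B)-1}, which reduces to $h(\Bpl^h)\cdot(z-z')=0$, equivalently $\E[\Bpl^h(0,z)]$ being constant over $z\in\range$. This constancy is the main obstacle. It follows from the $z$-independence of $\E[D^z_n]=a_n-a_{n-1}$ together with an upgrade of the one-sided Fatou estimate to equality in the limit: since the Cesaro averages $n^{-1}\sum_{k=1}^n D^z_k$ converge a.s.\ to $\Bpl^h(0,z)$ and in expectation to $\gpl^\beta(h)$, and since the one-step bound furnishes a uniformly integrable lower envelope, mass cannot escape at infinity, forcing $\E[\Bpl^h(0,z)]=\gpl^\beta(h)$ for every $z\in\range$. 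With this in hand, Theorem \ref{thm:minimizer}(ii) identifies $F$ as the claimed minimizer in \eqref{eq:Lambda:K-var} (or in \eqref{eq:g:K-var} when $\beta=\infty$).
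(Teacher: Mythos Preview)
Your steps (i)--(iii) are essentially correct and parallel the paper's argument. One minor point: the neighbor-swap justification in (i) does not cover admissible paths that use different multisets of steps; the paper's telescoping computation \eqref{bus15} handles this cleanly by showing directly that the sum along \emph{any} admissible path from $x$ to $y$ of length $\ell$ collapses to the single limit $\lim_n[\Gpl^\beta_{x,(n)}(h)-\Gpl^\beta_{y,(n-\ell)}(h)]$, which depends only on $x$, $y$, and $\ell$.

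The genuine gap is in step (iv). You try to invoke Theorem \ref{thm:minimizer}, which would make $F$ a \emph{corrector}, and for this you need $\E[\Bpl^h(0,z)]$ to be constant over $z\in\range$. Your ``mass cannot escape at infinity'' argument is not valid: from $X_n\to X$ a.s., $\E[X_n]\to c$, and uniform integrability of $X_n^-$ alone, one obtains only $\E[X]\le c$ (exactly Lemma \ref{app:lm1}), not equality---mass can perfectly well escape on the positive side (think $X_n=X+n\one_{A_n}$ with $\P(A_n)=1/n$). Indeed the paper explicitly flags the equality $\E[\Bpl^h(0,z)]=\gpl^\beta(h)$ for all $z$ as an \emph{additional} hypothesis in Remark \ref{pl-cor}, not a consequence of the theorem's assumptions.

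The theorem only asserts that $F$ is a \emph{minimizer} in \eqref{eq:Lambda:K-var} (resp.\ \eqref{eq:g:K-var}), not a corrector, and the paper proves this by a direct sandwich that uses only the one-sided bound $-h(\Bpl^h)\cdot z=\E[\Bpl^h(0,z)]\le\gpl^\beta(h)$ you already established. Writing $\wt B(\w,0,z)=\Bpl^h(\w,0,z)-h\cdot z$ (which recovers $\Vw$) and $F(\w,0,z)=-h(\Bpl^h)\cdot z-\wt B(\w,0,z)$, one has for $0<\beta<\infty$
\begin{align*}
\gpl^\beta(h)
&\le\P\text{-}\esssup_\w\;\beta^{-1}\log\sum_{z\in\range}|\range|^{-1}e^{\beta\Vw(\w)+\beta h\cdot z+\beta F(\w,0,z)}\\
&=\P\text{-}\esssup_\w\;\beta^{-1}\log\sum_{z\in\range}|\range|^{-1}e^{\beta\Vw(\w)-\beta h(\Bpl^h)\cdot z-\beta\wt B(\w,0,z)}\\
&\le\gpl^\beta(h)+\P\text{-}\esssup_\w\Bigl\{\Vw(\w)+\beta^{-1}\log\sum_{z\in\range}|\range|^{-1}e^{-\beta\wt B(\w,0,z)}\Bigr\}=\gpl^\beta(h),
\end{align*}
the first inequality by the variational formula \eqref{eq:Lambda:K-var}, the second by $-h(\Bpl^h)\cdot z\le\gpl^\beta(h)$, and the last equality by recovery \eqref{VBbeta}. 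The $\beta=\infty$ case is identical with $\max$ replacing the log-sum. This closes the argument without the unjustified constancy claim.
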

 
Remark \ref{pl-cor}  below indicates how the  theorem could be upgraded   to state that the minimizer  $F$ is also a corrector, in other words satisfies  \eqref{eq:Kvar:minbeta}  or \eqref{eq:Kvar:min}.

 \begin{remark} 
Assumptions \eqref{Gppliminf} and \eqref{Gplliminf} need to be verified separately for the case at hand.   
In the directed i.i.d.\ $L^{d+\e}$ case of Remark  \ref{rmk:dir-iid}, we can use 
lattice animal bounds:   Lemma 3 from page 85 of \cite{Gand-Kest-94}  gives  
 $  \sup_n   \E\bigl[ \bigl( n^{-1} \Gpp^\beta_{0,\,\xhat_{n}(\xi)}\bigr)^2\,\bigr]  < \infty$
 and $\sup_n   \E\bigl[ \bigl( n^{-1} \Gpl^\beta_{0,(n)}(h)\bigr)^2\,\bigr] < \infty$,  
 which imply  
$L^1$ convergence in  \eqref{p2pV_0}--\eqref{p2pV_0LPP} and \eqref{p2lh-lim}, respectively.  A completely general  sufficient condition is to have  $\Vw$  bounded above.       \hfill $\triangle$
\end{remark}



\begin{remark}   All of the assumptions and conclusions of  Theorems \ref{th:Bus=grad(a)}--\ref{th:Bus=grad(b)} can be  verified   in the exactly solvable cases.   
   In the explicitly solvable 1+1 dimensional cases the   Busemann  limits 
   $\Bpp^\xi$ and $\Bpl^h$ are connected by the duality of $\xi$ and $h$, and  lead to  the same set of cocycles, as described in the next section.  This also holds for the general  1+1 dimensional corner growth model under local regularity assumptions on the shape that ensure the existence of Busemann functions \cite{geor-rass-sepp-lppbuse}.   We would expect this feature  to be true very  generally.  
   \hfill $\triangle$  \end{remark}  
  
  \begin{remark} 
According to 
 \eqref{bus3},    $\Bpp^\xi$ is a microscopic gradient of free energy and passage times in direction $\xi$, and by   \eqref{eq:grad g=-h}   its average gives the macroscopic gradient.  
This form of \eqref{eq:grad g=-h} was anticipated in \cite{How-New-01} in the context of Euclidean first passage percolation (FPP), where $\gpp(x,y)=c\sqrt{x^2+y^2}$ for some $c>0$. 
(See the paragraph after the proof of Theorem 1.13 in \cite{How-New-01}.) 
A version of the formula also appears in Theorem 3.5 of \cite{Dam-Han-14} in the context of nearest-neighbor FPP.  
   \hfill $\triangle$ \end{remark} 

\begin{example}\label{ex:weak1.01}({\it Directed polymer in weak disorder}) 
The directed polymer  in weak disorder illustrates   Theorem \ref{th:Bus=grad(b)}.  We continue with the notation from Example \ref{ex:weak1} and take $\beta>0$ small enough.   Then $\P$-almost surely for $z\in\range$,    
\begin{align*}
&\Gpl^\beta_{0,(n)}(h)-\Gpl^\beta_{z,(n-1)}(h) \\
&\qquad= 
\beta^{-1}\log  W_n  - \beta^{-1}\log  W_{n-1}\circ T_z  
+   \beta^{-1}(\lambda(\beta)+\kappa(\beta h) ) \\
&\qquad\underset{n\to\infty}\longrightarrow  \; \; 
\beta^{-1}\log  W_\infty  - \beta^{-1}\log  W_{\infty}\circ T_z  
+   \beta^{-1}(\lambda(\beta)+\kappa(\beta h) ) \\
&\qquad=  -F(0,z)+  \gpl^\beta(h), 
\end{align*}
with $F$ defined by \eqref{weak:F}.  
Thus the Busemann function is  $\Bpl^h(0,z)=-  F(0,z)+\gpl^\beta(h)$.  By Theorem  \ref{th:Bus=grad(b)},  cocycle  $\Bpl^h(0,z)-h\cdot z$ is adapted to 
$V_0$, as already observed in Example \ref{ex:weak1}.   The Busemann function recovers the corrector $F$ identified  in Example \ref{ex:weak1}. 
\hfill $\triangle$
\end{example}

In the remainder of the section we prove Theorems \ref{th:Bus=grad(a)} and \ref{th:Bus=grad(b)} and then comment on getting a corrector in Theorem \ref{th:Bus=grad(b)}.

\begin{proofof}{of Theorem \ref{th:Bus=grad(a)}}
    To check stationarity, for $z\in\range$ 
\begin{align*}
\Bpp^\xi(z+x,z+y)&=\lim_{n\to\infty}[ \Gpp^\beta_{z+x,\,z+\xhat_{n}(\xi)}-\Gpp^\beta_{z+y,\,z+\xhat_{n}(\xi)}]\\
&=\lim_{n\to\infty}[\Gpp^\beta_{x,\,\xhat_{n}(\xi)}-\Gpp^\beta_{y,\,\xhat_{n}(\xi)}]\circ T_z  =\Bpp^\xi(x,y)\circ T_z.\end{align*}
 Additivity is satisfied by telescoping sums. 
 The condition of Definition \ref{def:bdry-model} is readily checked.  For example,
 in the $\beta=\infty$ case, if  $x$ is reachable   from  $0$ and from  every $z\in\range$,
$\max_{z\in\range}\{\Vw(\w,z)+\Gpp^\infty_{z,x}-\Gpp^\infty_{0,x}\}=0$ because some $z\in\range$ is the first step of a maximizing path from $0$ to $x$.  

Assume \eqref{Gppliminf}.  Recall \eqref{V*}.    Fix $\ell\in\N$ large enough so that, for each $k\ge m\ge 1$, there exists an admissible path $\{y_i^{m,k}\}_{i=0}^{\ell}$ from $\xhat_{k-m}(\xi)$ to  $\xhat_{k+\ell}(\xi)-\xhat_{m}(\xi)$. 
Then 
\be\label{bus-77}
\Gpp^\beta_{0,\, \xhat_{k+\ell}(\xi)-\xhat_{m}(\xi)}(\w) \ \ge \   \Gpp^\beta_{0,\,  \xhat_{k-m}(\xi)}(\w)  +\beta^{-1}\log p(y^{m,k}_{0,\ell}) - \sum_{i=0}^{\ell-1} \Vw^*(T_{y_i^{m,k}}\w) . 
\ee
 By \eqref{bus-77},  for $0<m<n$, 
\begin{align*}
&\frac1{(m+\ell)n} \sum_{k=m}^n \E\bigl[   \Gpp^\beta_{0,\, \xhat_{k+\ell}(\xi)} - \Gpp^\beta_{\xhat_{m}(\xi),\, \xhat_{k+\ell}(\xi)} \, \bigr] \\
&\qquad = \frac1{(m+\ell)n} \sum_{k=m}^n \E\bigl[   \Gpp^\beta_{0,\, \xhat_{k+\ell}(\xi)} - \Gpp^\beta_{0,\, \xhat_{k+\ell}(\xi)-\xhat_{m}(\xi)} \, \bigr]  \\
&\qquad
\le   \frac1{(m+\ell)n} \sum_{k=m}^n \E\bigl[   \Gpp^\beta_{0,\, \xhat_{k+\ell}(\xi)} - \Gpp^\beta_{0,\, \xhat_{k-m}(\xi)} \, \bigr] 
\; + \; \frac{\log p(y^{m,k}_{0,\ell})}{\beta (m+\ell)} \; + \; \frac{\ell\E(\Vw^*)}{m+\ell}\\
&\qquad
\le   \frac1{(m+\ell)n} \sum_{k=n-m+1}^{n+\ell}  \E[   \Gpp^\beta_{0,\, \xhat_{k}(\xi)} ] 
\;-\; 
 \frac1{(m+\ell)n} \sum_{k=0}^{m+\ell-1}  \E[   \Gpp^\beta_{0,\, \xhat_{k}(\xi)} ] 
\; + \; \frac{C}{m}  
\end{align*}
where the last $C$ depends on the fixed $\ell$.
By \eqref{Gppliminf} we get the upper bound 
	\begin{align}
	\label{upper007}
	\varliminf_{n\to\infty} \frac1{(m+\ell)n} \sum_{k=m}^n \E\bigl[   \Gpp^\beta_{0,\, \xhat_{k+\ell}(\xi)} - \Gpp^\beta_{\xhat_{m}(\xi),\, \xhat_{k+\ell}(\xi)} \, \bigr] \le \gpp^\beta(\xi)+\frac{C}m.
	\end{align}
On the other hand, by superadditivity,  
	\[\Gpp^\beta_{0,\, \xhat_{k+\ell}(\xi)} - \Gpp^\beta_{\xhat_{m}(\xi),\, \xhat_{k+\ell}(\xi)}\ge\Gpp^\beta_{0,\,\xhat_m(\xi)}\]
and hence $\frac1{(m+\ell)n} \bigl[ \sum_{k=m}^n  \bigl(  \Gpp^\beta_{0,\, \xhat_{k+\ell}(\xi)} - \Gpp^\beta_{\xhat_{m}(\xi),\, \xhat_{k+\ell}(\xi)} \, \bigr)\bigr]^-$ is uniformly integrable as $n\to\infty$.   Since by assumption \eqref{bus3} 
\[   \frac1{m+\ell} \, \Bpp^\xi(0,\xhat_m(\xi)) 
=   \lim_{n\to\infty} \frac1{(m+\ell)n} \sum_{k=m}^n \bigl[   \Gpp^\beta_{0,\, \xhat_{k+\ell}(\xi)} - \Gpp^\beta_{\xhat_{m}(\xi),\, \xhat_{k+\ell}(\xi)} \, \bigr]
\quad\text{$\P$-a.s.}  \]
we can apply Lemma \ref{app:lm1} from the appendix  to conclude that  
 $\Bpp^\xi(0,\xhat_m(\xi))$ is integrable   and satisfies
 \begin{align}\label{bus8} 
 \frac1{m+\ell} \,\E [\Bpp^\xi(0,\xhat_m(\xi))]  \le  \gpp^\beta(\xi)  \; + \; \frac{C}{m}.
\end{align}

Now we can show 
 $\Bpp^\xi(0,z)\in L^1(\P)$     $\forall z\in\range$.   We have assumed that each step $z$ appears along the path $\xhat_m(\xi)$, so it suffices to observe that  
 \begin{align*} 
 & \Bpp^\xi(0, \xhat_m(\xi)-\xhat_{m-1}(\xi)) \circ T_{\xhat_{m-1}(\xi)}  \\
&\qquad\qquad\qquad\qquad
=  \Bpp^\xi(0, \xhat_m(\xi)) - \Bpp^\xi(0, \xhat_{m-1}(\xi))
\; \in \; L^1(\P). \end{align*}  

We have established that   $\Bpp^\xi$ is a stationary $L^1(\P)$ cocycle that is adapted to 
$\Vw$ in the sense of Definition \ref{def:bdry-model}. 
By definition \eqref{EB}, the left-hand side of \eqref{bus8}   equals 
\[  -(m+\ell)^{-1} h(\Bpp^\xi)\cdot \xhat_m(\xi) \; \to \; - h(\Bpp^\xi)\cdot  \xi
\qquad\text{as $m\to\infty$.} \]  
We have     $- h(\Bpp^\xi)\cdot  \xi\le  \gpp^\beta(\xi) $. 
Since $\gpl^\beta(h(\Bpp^\xi))=0$ by  Theorem \ref{thm:minimizer},  variational formula \eqref{eq:tilt-velocity}  gives the opposite inequality  $- h(\Bpp^\xi)\cdot  \xi\ge  \gpp^\beta(\xi) $.  Duality of 
$h(\Bpp^\xi)$ and  $ \xi$ has been established. \hfill\hfill\qed
\end{proofof}

\begin{proofof}{of Theorem \ref{th:Bus=grad(b)}}
We  check that limits \eqref{buse4} define a stationary cocycle  $\Bpl^h(\w,x,y)$. Fix $x,y\in\gr$ such that there is a  path $x_{0,\ell}$ with increments $z_i=x_i-x_{i-1}\in\range$ that goes from $x=x_0$ to $y=x_\ell$.  By shifting the $n$-index, 
\be\label{bus15} \begin{aligned}
\sum_{i=0}^{\ell-1} \Bpl^h(T_{x_i}\w, 0, z_{i+1})
&= \lim_{n\to\infty}  \sum_{i=0}^{\ell-1} [\Gpl^\beta_{x_i,(n)}(h)-\Gpl^\beta_{x_{i+1},(n-1)}(h)] \\
&= \lim_{n\to\infty}  \sum_{i=0}^{\ell-1} [\Gpl^\beta_{x_i,(n-i)}(h)-\Gpl^\beta_{x_{i+1},(n-i-1)}(h)]\\  
&= \lim_{n\to\infty}    [\Gpl^\beta_{x_0,(n)}(h)-\Gpl^\beta_{x_{\ell},(n-\ell)}(h)]\\
&= \lim_{n\to\infty}    [\Gpl^\beta_{0,(n)}(h)-\Gpl^\beta_{y-x,(n-\ell)}(h)]\circ T_{x}. 
\end{aligned}\ee  
By assumption each path from $x$ to $y$  has the same number $\ell$ of steps. 
Hence we can define $\Bpl^h(\w,x,y)=\sum_{i=0}^{\ell-1} \Bpl^h(T_{x_i}\w,0, z_{i+1})$ independently of the particular steps $z_i$ taken, and with the property  
$\Bpl^h(\w,x,y)=\Bpl^h(T_x\w,0,y-x)$.  

 If $y$ is not accessible from $x$, pick  a point $\bar x$ from which both $x$ and $y$ are accessible and set  $\Bpl^h(\w,x,y)=\Bpl^h(\w,\bar x,y)-\Bpl^h(\w,\bar x,x)$.  This definition is independent of the point $\bar x$.    Now we have a  stationary cocyle $\Bpl^h$.  


 A   first step decomposition of $\Gpl^\beta_{0,(n)}(h)$ shows that cocycle  
 \be\label{bus13} \wt B(0,z)=\Bpl^h(0,z)-h\cdot z \ee
satisfies Definition \ref{def:bdry-model}.  
   
 Under assumption    \eqref{Gplliminf}  the integrability of  $\Bpl^h(0,z)$ is proved exactly as in the proof of Theorem \ref{th:Bus=grad(a)}.  First an upper bound:   
	\begin{align}\label{L1-upper}
	\begin{split}
	&\varliminf_{n\to\infty}n^{-1}\sum_{k=1}^n \E[\Gpl^\beta_{0,(k)}(h)-\Gpl^\beta_{z,(k-1)}(h)]\\
	&\qquad=\varliminf_{n\to\infty}n^{-1}\sum_{k=1}^n \E[\Gpl^\beta_{0,(k)}(h)-\Gpl^\beta_{0,(k-1)}(h)]\\
	&\qquad=\varliminf_{n\to\infty}n^{-1}\E[\Gpl^\beta_{0,(n)}(h)] 
	\le\gpl^\beta(h).
	\end{split}
	\end{align}
 Then uniform integrability of $\bigl[n^{-1}\sum_{k=1}^n (\Gpl^\beta_{0,(k)}(h)-\Gpl^\beta_{z,(k-1)}(h))\bigr]^-$ from the lower bound 
  	\be\label{pl-ui} \Gpl^\beta_{0,(n)}(h)-\Gpl^\beta_{z,(n-1)}(h)\ge \Vw(\w,z)+h\cdot z+\beta^{-1}\log p(z).\ee 
By Lemma \ref{app:lm1},    $\Bpl^h(0,z)\in L^1(\P)$ and  
\be\label{bus17}  -h(\Bpl^h)\cdot z = \E[\Bpl^h(0,z)]\le\gpl^\beta(h)\quad \text{for}\quad z\in\range. \ee  

Define the centered stationary $L^1$ cocycle 
\be\label{bus-F}  F(\w, x,y)=  h(\wt B)\cdot (x-y)-  \wt B (\w, x,y)= 
  h(\Bpl^h)\cdot (x-y)-  \Bpl^h(\w, x,y). \ee 
By variational formula \eqref{eq:Lambda:K-var}, \eqref{bus13}, \eqref{bus17},  and \eqref{VBbeta} applied to $\wt B$, 
\be\label{bus19}\begin{aligned}
&\gpl^\beta(h) \le \P\text{-}\esssup_\w\;  \beta^{-1}\log \sum_{z\in\range} p(z)e^{\beta \Vw(\w,z)+\beta h\cdot z+\beta F(\w, 0,z)}\\
&= \P\text{-}\esssup_\w\;  \beta^{-1}\log \sum_{z\in\range} p(z)e^{\beta \Vw(\w,z)-\beta h(\Bpl^h)\cdot z-\beta \wt B(\w, 0,z)}\\
&\le \gpl^\beta(h) \; + \; \P\text{-}\esssup_\w\;   \beta^{-1}\log \sum_{z\in\range} p(z)e^{ \beta  \Vw(\w,z)  -\beta \wt B(\w, 0,z)} 
= \gpl^\beta(h).  
\end{aligned}\ee  
This shows that $F$ is a minimizer in  \eqref{eq:Lambda:K-var}. A similar proof works for the case $\beta=\infty$.\hfill\hfill\qed
\end{proofof}

\begin{remark}\label{pl-cor}({\it Corrector in Theorem \ref{th:Bus=grad(b)}})
Continue with  the assumptions of Theorem \ref{th:Bus=grad(b)}.  
We point out two sufficient conditions for concluding  that  $F$ of \eqref{bus-F}   is  not merely a minimizing cocycle for  $\gpl^\beta(h)$ as stated in  Theorem \ref{th:Bus=grad(b)},  but also a corrector  for $\gpl^\beta(h)$.  
By Theorem \ref{thm:minimizer},   $F$   is a corrector  for $\gpl^\beta(h')$ for  any $h'$ such that   $h'-h(\wt B)\perp\aff\range$.  
     Since $h'-h(\wt B)= h'-h(\Bpl^h)-h$, 
    for $h'=h$ the condition is  $h(\Bpl^h)\perp\aff\range$, or equivalently that $h(\Bpl^h)\cdot z$ is constant over $z\in\range$.   \eqref{bus17} and \eqref{bus19} (and its analogue for $\beta=\infty$) imply that $-h(\Bpl^h)\cdot z=\gpl^\beta(h)$ for at least one $z\in\range$.   Hence the condition is 
 	\begin{align}\label{-h.z=g}
	-h(\Bpl^h)\cdot z=\gpl^\beta(h)\quad\text{for all }z\in\range.
	\end{align}
Here are two ways to satisfy \eqref{-h.z=g}.  	

{\rm(a)} By the first two equalities in \eqref{L1-upper}, \eqref{-h.z=g} would follow from convergence of expectations  in  \eqref{p2lh-lim} and   Ces\`aro convergence of expectations  in \eqref{buse4}:
\begin{align*}
\E[\Bpl^h(0,z)] &=  \lim_{n\to\infty}\E\Bigl[\,\frac1n\sum_{k=1}^n\bigl(\Gpl^\beta_{0,(k)}(h)-\Gpl^\beta_{z,(k-1)}(h)\bigr)\Bigr]  \\
&= \lim_{n\to\infty}\E\bigl[n^{-1}\Gpl^\beta_{0,(n)}(h)\bigr] = \gpl^\beta(h).  
\end{align*}

%
	
{\rm(b)}   Suppose $h$ is dual to some $\bar\xi\in\ri\Uset$.  Then \eqref{-h.z=g} follows by this argument.   First $\gpl^\beta(h(\Bpl^h)+h)=\gpl^\beta(h(\wt B))=0$  by Theorem \ref{thm:minimizer}.   Then combining \eqref{eq:velocity-tilt} and \eqref{bus17} gives 
	\begin{align}\label{auxauxaux}
	\gpp(\xi)+h\cdot\xi\le-h(\Bpl^h)\cdot\xi\le\gpl^\beta(h) \qquad \forall \xi\in\Uset.  
	\end{align}
From this $-h(\Bpl^h)\cdot\bar\xi=\gpl^\beta(h)$.  Since $\bar\xi\in\ri\Uset$ we can write $\bar\xi=\sum_{z\in\range} \alpha_z z $ where each $\alpha_z>0$,  and now \eqref{bus17} forces \eqref{-h.z=g}.  	
%
%
   \hfill$\triangle$
  \end{remark}

\section{Exactly solvable models in 1+1 dimensions}
\label{sec:lg+exp}

We   describe how the   theory developed  manifests itself in  two well-known  1+1 dimensional  exactly solvable models.  
The setting is the canonical one with  $\Omega=\R^{\Z^2}$,   
$\range=\{e_1,e_2\}$,  $\Uset=\{(s,1-s): 0\le s\le 1\}$, 
and i.i.d.\ weights  $\{\w_x\}_{x\in\Z^2}$ under $\P$.  The   distributions of the weights 
are specified  in the examples below.    


%

\subsection{Log-gamma polymer}  \label{sec:lg} 
	The log-gamma polymer \cite{sepp-12-aop} is an explicitly solvable 
	1+1 dimensional directed polymer model for which the approach of 
  this paper can be  carried out explicitly.  Some details are  in \cite{geor-rass-sepp-yilm-15}. 
	We describe the results briefly.  

	Fix   $0<\rho<\infty$
 	and let   $\w_x$ be Gamma($\rho$)-distributed, i.e.\ 
	$\P\{{\w_x}\le r\}=\Gamma(\rho)^{-1} \int_0^r t^{\rho-1}e^{-t}\,dt$ for $0\le r<\infty$.    
	Inverse temperature is fixed at $\beta=1$. 
	(Parameter $\rho$ can be viewed as  temperature, see Remark 3.2 in 
	\cite{geor-rass-sepp-yilm-15}.)
	The potential is  $\Vw(\w)=-\log \w_0+\log 2$.  
	Let $\Psi_0=\Gamma'/\Gamma$ and $\Psi_1=\Psi_0'$ be
	  the digamma and  trigamma function. 
	  
   Utilizing the stationary version of the log-gamma polymer one can compute the point-to-point  limit  for  $\xi=(s,1-s)$ as
		\be
		\label{eq:lfed}
			\gpp^1(\xi) 
				=\inf_{\theta \in (0,\rho)}\{ -s\Psi_0(\theta)  -(1-s)\Psi_0(\rho-\theta)\}= -s\Psi_0(\theta(\xi)) -(1-s)\Psi_0(\rho-\theta(\xi))
		\ee
where 	$\theta=\theta(\xi)\in(0,\rho)$ is the unique solution of the equation
	 \[ s\Psi_1(\theta)-(1-s)\Psi_1(\rho-\theta)=0.\]    
	 (See Theorem 2.4 in \cite{sepp-12-aop} or Theorem 2.1 in \cite{geor-sepp-13}.) 
From this we solve the tilt-velocity duality   explicitly:  tilt $h=(h_1, h_2)\in\R^2$ and velocity $\xi\in \ri\Uset$ are dual (Definition \ref{def:h-zeta})  if and only if 
		\be
			\label{lg-dual}  
				h_1-h_2 =\Psi_0(\theta(\xi))-\Psi_0(\rho-\theta(\xi)). 
		\ee
Then 
\be\label{lg-gpl} \gpl^1(h)=h_1-\Psi_0(\theta(\xi))=h_2-\Psi_0(\rho-\theta(\xi)). \ee  

	For all $\xi \in \ri\Uset$ and $h\in\R^2$, the point-to-point and point-to-line  Busemann functions  $\Bpp^\xi(\w,0,z)$  and  $\Bpl^\h(\w,0,z)$  exist as the a.s.\ limits  defined by 
	\eqref{bus3} and \eqref{buse4}   (Theorems 4.1 and 6.1 in \cite{geor-rass-sepp-yilm-15}).  They  satisfy 
		\be
			\label{buse6}  
			\Bpl^h(\w, 0,z)=	\Bpp^\xi(\w,0,z)+h\cdot z  \quad\text{for $z\in\range$} 
		\ee
	whenever $\xi$ and $h$ are dual (\cite{geor-rass-sepp-yilm-15}, Theorem 6.1).    All   the assumptions and  conclusions of  Theorems \ref{th:Bus=grad(a)}--\ref{th:Bus=grad(b)}  and Remark \ref{pl-cor} are valid.  
	
The  marginal distributions of the Busemann functions are given by
		\[
			 e^{-\Bpp^\xi(x,x+e_1)}\sim \textrm{Gamma}(\theta(\xi)) 
				\quad \textrm{and} \quad 
			 e^{-\Bpp^\xi(x,x+e_2)}\sim \textrm{Gamma}(\rho-\theta(\xi)).
		\]
Vector 
		\[ 
			 h(\Bpp^\xi)=-\bigl(\,\E[\Bpp^\xi(0,e_1)]\,, \E[\Bpp^\xi(0,e_2)]\,\bigr)=\bigl(\Psi_0(\theta(\xi)), \Psi_0(\rho-\theta(\xi))\bigr)
		\] 
	is dual to $\xi$ and  $\gpp^1(\xi)=-h(\Bpp^\xi)\cdot\xi$   gives   the point-to-point free energy \eqref{eq:lfed}. 	From \eqref{buse6} we deduce  $\E[\Bpl^h(0,z)]=\gpl^1(h)$ for $z\in\{e_1,e_2\}$.

\subsection{Corner growth model with exponential weights}
	\label{sec:exp}
	
	This is   last-passage percolation on $\Z^2$ with admissible steps $\{e_1, e_2\}$ and i.i.d.\ weights $\{ \om_x\} _{x\in \Z^2}$ with rate 1 exponential distribution.  That is,  $\P\{\w_x>s\}=e^{-s}$ for $0\le s<\infty$.    The potential is $V_0(\om)=\om_0$ and then 
	  $G^\infty_{x,y}$ is  as in \eqref{Gxy6}.  This model can be viewed as the
	zero-temperature limit of the log-gamma polymer (Remark 4.3 in 
	\cite{geor-rass-sepp-yilm-15}). 
 	
  Since the   limit shape of  the exponential corner growth model  is known explicitly and has curvature,  Busemann functions can  be derived with the approach of Newman et al.\ by first proving coalescence of geodesics.  This approach was carried out   by Ferrari and Pimentel  \cite{ferr-pime-05}  (see also Sect.~8 of \cite{Cat-Pim-12}).     An alternative approach that begins by constructing stationary cocycles from queueing fixed points  is in   \cite{geor-rass-sepp-lppbuse}.   
  	
		Velocity $\xi = (s,1-s)$ 
	now selects a parameter $\alpha(\xi) =\frac{\sqrt{s}}{ \sqrt{s} + \sqrt{1-s}}\in (0,1)$ that
	characterizes the marginal distributions of the Busemann functions:  
		\[
			\Bpp^{\xi}(x,x+e_1) \sim \textrm{Exp}(\alpha(\xi)) 
				\quad \textrm{ and } \quad 
			\Bpp^{\xi}(x,x+e_2) \sim \textrm{Exp}(1-\alpha(\xi)).
		\] 
  A tilt dual to   $\xi \in \Uset$ is given by
		\[
			h(\xi)=-\bigl(\,\E[\Bpp^\xi(0,e_1)], \E[\Bpp^\xi(0,e_2)]\,\bigr)=-\Bigl(\,\frac{1}{\alpha(\xi)}\,,\, \frac{1}{1- \alpha(\xi)} \,\Bigr).
		\]
	Substituting in \eqref{eq:p2p=B.xi} we obtain  the well-known  limit  formula from  Rost \cite{rost}:
		\[
		\gpp^{\infty}(s,1-s) = 1 + 2\sqrt{s(1-s)}.
		\]

\section{Variational formulas in terms of measures}
\label{sec:entr} 

In this section  we derive  variational formulas for last-passage percolation
 in terms of probability measures on the spaces $\Omega_\ell=\Omega\times\range^\ell$ for 
 $\ell\in\Z_+$.  
This section contains  no new results for positive temperature models, but positive temperature results are recalled and rewritten    for taking a zero-temperature limit.  
 The formulas we obtain are zero-temperature limits
 of polymer  variational formulas   that involve entropy.   A maximizing measure can be identified  for  
polymers in weak (enough) disorder
(Example \ref{ex:weak2} below). 
 In the final Section \ref{sec:finite} we relate these measure variational formulas to Perron-Frobenius theory,  the classical one for $0<\beta<\infty$ and max-plus theory for $\beta=\infty$.   
 
Return now to the setting of  Section \ref{sec:free},  with  general  $\ell\in\Z_+$ and measurable potential 
 $V:\Omega_\ell\to\R$.    For $\beta\in(0,\infty]$ 
 define the point-to-level  and point-to-point  limits 
$\gpl^\beta$ and $\gpp^\beta(\xi)$ by Theorem \ref{th:p2p}.   
A generic element of  $\Omega_\ell$ is denoted by  $\wz=(\w,z_{1,\ell})$ 
with $\w\in\Omega$ and   $z_{1,\ell}=(z_1,\dotsc, z_\ell)\in\range^\ell$.     For  $1\le j\le \ell$ let $Z_j(\w, z_{1,\ell})=z_j$ denote the $j$th  step  variable
on $\Omega_\ell$.  
  On $\Omega_\ell$ introduce the mappings 
\begin{align}\label{eq:Sz}
S_z(\w,z_{1,\ell})=(T_{z_1}\w,(z_{2,\ell-1},z)),\quad z\in\range.
\end{align}
  When $\ell=0$, 
always take   $\Omega_0=\Omega$, 
 $\wz=\w$ and  $S_z=T_z$. 
   In general,  let $b\mathcal X$ denote the space of bounded measurable real-valued  functions on the space $\mathcal X$.

The probability  measures that appear in the variational formula possess  a natural invariance.  This is described in the next proposition, proved at the end of the section.       One manifestation of the invariance will be the following property of a  probability measure $\mu\in\M_1(\Omega_\ell)$ for any $\ell\in\Z_+$:    
\begin{align} 
\label{eq:max>}
E^\mu\big[ \max_{z\in\range} f\circ S_z \big] \ge E^\mu[ f ]
\quad \text{ $\forall\, f\in b\Omega_\ell$.}
\end{align} 
 If $\ell\ge1$ and   $\mu\in\cM_1(\Omega_\ell)$,  let 
$\mu_\ell(\cdot\,\vert\, \w, z_{1,\ell-1})$ denote the conditional 
distribution of  $Z_\ell$ under $\mu$, given  $(\w, z_{1,\ell-1})$.  
We associate to $\mu$  the following  Markov transition  kernel on the space $\Omega_\ell$: 
\be  q_z(\w, z_{1,\ell}) \equiv q\bigl( (\w, z_{1,\ell}), (T_{z_1}\w, (z_{2,\ell},z))\bigr)
=\mu_\ell(z\,\vert\,T_{z_1}\w, z_{2,\ell}) .  \label{q-ker1}\ee
The first notation is a 
convenient abbreviation.  Under this  kernel the state of the  Markov chain on $\Omega_\ell$  jumps from  $(\w, z_{1,\ell})$ to  $(T_{z_1}\w, (z_{2,\ell},z))$
with probability  $\mu_\ell(z\,\vert\,T_{z_1}\w, z_{2,\ell})$ for $z\in\range$. 

Let  $z_{k,\infty}=(z_i)_{k\le i<\infty}$
denote an infinite  sequence of steps indexed by $\{k,k+1,k+2, \dotsc\}$.    It is an element of $\range^{\{k,k+1,k+2, \dotsc\}}$ which we identify with $\range^\N$ in the obvious way.  
 On the space  $\Omega_\N=\Omega\times\range^\N$ define a shift mapping $S$ 
 by  $S(\w,z_{1,\infty})=(T_{z_1}\w,z_{2,\infty})$.  Let $\cM_s(\Omega_\N)$ denote the 
  set of $S$-invariant probability measures on $\Omega_\N$.

\begin{proposition} \label{lm:S}  $  $ \hbox{}  

{\rm Case (a).}    Let $\ell\in\N$ and $\mu\in\cM_1(\Omega_\ell)$.
 Then properties {\rm(a.i)}--{\rm(a.iv)} below are equivalent.  

{\rm (a.i)}   $\mu$ is invariant under kernel   \eqref{q-ker1} defined in terms of $\mu$ itself.   

{\rm (a.ii)}    $\mu$ is the $\Omega_{\ell}$-marginal of an $S$-invariant 
probability measure $\nu\in\cM_s(\Omega_\N)$.

{\rm (a.iii)}   $\mu$  has property \eqref{eq:max>}. 

{\rm (a.iv)}  $\mu$ satisfies this condition: 
 \begin{align} E^\mu[ f(\w, Z_{1,\ell-1}) ] = E^\mu[ f(T_{Z_1}\w, Z_{2,\ell}) ]
\quad \text{ $\forall\, f\in b\Omega_{\ell-1}$.}
\label{Sell-1}\end{align} 

{\rm Case (b).}    Let $\ell=0$ and $\mu\in\cM_1(\Omega)$.
 Then properties {\rm(b.i)}--{\rm(b.iii)} below are equivalent.  

{\rm (b.i)}  There exists a Markov kernel  of the form $\{q_z(\w)\equiv q(\w,T_z\w):z\in\range\}$ on $\Omega$  that fixes $\mu$.  

{\rm (b.ii)}  $\mu$ is the $\Omega$-marginal of an $S$-invariant 
probability measure $\nu\in\cM_s(\Omega_\N)$.

{\rm (b.iii)} $\mu$  has property \eqref{eq:max>} with $S_z=T_z$.  

\end{proposition}

For $\ell\in\Z_+$  let $\M_s(\Omega_\ell)$ denote the space of probability measures described in Proposition \ref{lm:S} above.   
To illustrate, if  $\ell=0$  then $\M_s(\Omega)$ contains all $\{T_x\}$-invariant measures, and if also   $0\in\range$ then $\M_s(\Omega)$ contains  all probability  measures  on $\Omega$.

We can now state the measure variational formulas for point-to-level and point-to-point last-passage percolation limits.   
 For a probability measure $\mu$ on $\Omega_\ell$, $\mu_0$ denotes the 
$\Omega$-marginal: $\mu_0(A)=\mu(A\times\range^\ell)$.   If $\ell=0$ then $\mu_0=\mu$.    $V^-=-\min\{V,0\}$ is the negative part of the function $V$. 

 \begin{theorem}\label{th:g=Hstar}  
Let $\P$ be ergodic, $\ell\in\Z_+$,  and assume $V\in\cL$. 
Then 
	\begin{align} 
	\gpl^\infty&=\sup\Big\{E^\mu[V]:\mu\in\cM_s(\Omega_\ell),\,\mu_0\ll\P,\, E^\mu[V^-]<\infty\Big\}.\label{eq:g:H-var}
	\end{align}
 \end{theorem}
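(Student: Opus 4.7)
The plan is to prove the two inequalities $\mathcal{S}\le\gpl^\infty$ and $\gpl^\infty\le\mathcal{S}$ separately, where $\mathcal{S}$ denotes the supremum on the right-hand side. The first uses a direct construction of a stationary admissible path from $\mu$ together with the ergodic theorem, and avoids any entropy apparatus. The second is most naturally obtained as the zero-temperature limit of a positive-temperature analog involving entropy, in line with the remark that the measure formulas in this section are ``zero-temperature limits of polymer variational formulas that involve entropy.''

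For $\mathcal{S}\le\gpl^\infty$, fix an admissible $\mu$. Using $S$-invariance, either in the equivalent shift-invariance form of Lemma \ref{lm:periodic-inv} when $\ell\ge 1$, or, when $\ell=0$, via Lemma \ref{lm:S} which realizes $\mu$ as the projection of a truly invariant measure on a larger space, one can extend $\mu$ to the law of a stationary sequence $(\eta_k)_{k\ge 0}=((T_{X_k}\w_0,Z_{k+1,k+\ell}))_{k\ge 0}$ on $\Omega_\ell$, with $\eta_k\sim\mu$ and $X_n=Z_1+\cdots+Z_n$ an admissible random path in $\Z^d$. Since $E^\mu[V^-]<\infty$, the ergodic theorem applies and yields
\[
\frac{1}{n}\sum_{k=0}^{n-1} V(T_{X_k}\w_0,Z_{k+1,k+\ell})\;\longrightarrow\;E^\mu[V]
\]
in $[-\infty,\infty]$. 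The left-hand sum is bounded above by $G^\infty_{0,(n)}(\w_0)$ since the path is admissible. Dividing by $n$ and using $\mu_0\ll\P$ to transfer the $\P$-a.s.\ convergence $G^\infty_{0,(n)}/n\to\gpl^\infty$ from Theorem \ref{th:p2p} to $\mu_0$-a.e.\ $\w_0$ gives $E^\mu[V]\le\gpl^\infty$.

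For $\gpl^\infty\le\mathcal{S}$, I would first establish a positive-temperature formula of the form
\[
\gpl^\beta\;=\;\sup_\mu\bigl\{E^\mu[V]-\beta^{-1}H^e(\mu)\bigr\},\qquad 0<\beta<\infty,
\]
where $H^e\ge 0$ is a suitable entropy functional relative to a reference measure such as $\P\otimes U_\range$ (with $U_\range$ uniform on $\range^\ell$) and the supremum runs over $S$-invariant $\mu$ with $\mu_0\ll\P$ and $H^e(\mu)<\infty$. This identity should follow from the cocycle variational formula of Theorem \ref{th:K-var} by Fenchel--Legendre duality between cocycles and $S$-invariant measures, in the spirit of \cite{Ras-Sep-14,Ras-Sep-Yil-13}. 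Since $H^e\ge 0$, dropping the entropy term gives $\gpl^\beta\le\sup_\mu E^\mu[V]\le\mathcal{S}$, where the last inequality uses that the positive-temperature admissibility class is contained in the zero-temperature one. Letting $\beta\to\infty$ via the sandwich \eqref{eq:g-Lambda} then yields $\gpl^\infty\le\mathcal{S}$.

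The hard part is the positive-temperature step: carrying out the convex-duality argument in an infinite-dimensional setting, identifying the correct entropy functional, verifying lower semicontinuity and coercivity, and ensuring the sup class embeds into the zero-temperature one. A secondary technical point is the construction of a genuine Markov transition kernel realizing the stationary path in the $\ell=0$ case with $0\notin\range$, since there \eqref{eq:max>} is only a one-sided inequality; this requires a min-max argument, which is what Lemma \ref{lm:S} is designed to provide.
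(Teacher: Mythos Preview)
Your upper bound $\mathcal S\le\gpl^\infty$ via the ergodic theorem along a stationary admissible path is correct and more elementary than what the paper does. One small point: the Birkhoff limit is $E^\nu[V\mid\mathcal I]$ rather than $E^\mu[V]$ unless the extension $\nu$ is ergodic, but taking $\nu$-expectation of the a.s.\ inequality $E^\nu[V\mid\mathcal I]\le\gpl^\infty$ recovers $E^\mu[V]\le\gpl^\infty$, so this is harmless. For $\ell\ge1$ note that Lemma~\ref{lm:periodic-inv} alone does not build the stationary sequence on $\Omega_\ell$; you really want Lemma~\ref{lm:S}(iii) in all cases.

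For the lower bound the paper takes a route that is close to yours in spirit but tighter in execution, and in fact delivers both inequalities at once. Rather than re-deriving a positive-temperature entropy formula by Fenchel--Legendre duality from Theorem~\ref{th:K-var}, the paper simply quotes the known formula \eqref{eq:Lambda:H-var} from \cite{Ras-Sep-Yil-13}, where the supremum is over \emph{all} $\mu\in\cM_1(\Omega_\ell)$. The substance of the proof is then the Donsker--Varadhan representation \eqref{entr-5}, which yields two facts: (a) if $\mu$ fails \eqref{eq:max>} then $\Hbar(\mu)=\infty$, so the supremum in \eqref{eq:Lambda:H-var} may be restricted to $\cM_s(\Omega_\ell)$; and (b) for $\mu\in\cM_s(\Omega_\ell)$ with $\mu_0\ll\P$, Lemma~\ref{lm:S} provides an invariant kernel supported on shifts, whence $0\le\Hbar(\mu)\le\log|\range|$. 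Plugging this two-sided entropy bound into \eqref{eq:H-var-7} gives the sandwich
\[
\mathcal S-\beta^{-1}\log|\range|\ \le\ \gpl^\beta\ \le\ \mathcal S,
\]
and letting $\beta\to\infty$ via \eqref{eq:g-Lambda} finishes. So what you flagged as ``the hard part'' --- ensuring the positive-temperature sup class matches the zero-temperature one --- is handled not by convex duality from cocycles but by the entropy representation: $S$-invariance is \emph{exactly} the condition $\Hbar(\mu)<\infty$ (given $\mu_0\ll\P$). Your direct ergodic argument for $\mathcal S\le\gpl^\infty$ is a valid alternative to the $\Hbar\le\log|\range|$ half of this, but the paper's route is shorter because it gets both directions from the same bound.
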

 
 The set in braces in \eqref{eq:g:H-var} is not empty because the measure 
 $\mu(d\w, z_{1,\ell})$ $=$ $\P(d\w)\alpha(z_1)\dotsm\alpha(z_\ell)$ is a member of $\cM_s(\Omega_\ell)$ for any probability $\alpha$ on $\range$ and $V(\cdot\,, z_{1,\ell})\in L^1(\P)$ by the assumption $V\in\cL$.  
 
We state the point-to-point version only for the directed i.i.d.~$L^{d+\e}$ case
defined in  Remark \ref{rmk:dir-iid}.

\begin{theorem}\label{th:ent2:lpp}
Let $\Omega=\cS^{\Z^d}$ be a product of Polish  spaces with  
  shifts $\{T_x\}_{x\in\Z^d}$ and     an i.i.d.\ product measure  $\P$.  
 Let $\ell\in\N$ and assume   $0\notin\Uset$.  Assume that $\forall z_{1,\ell}\in\range^\ell$, 
  $V(\w,z_{1,\ell}) $ is a local function of $\w$ and a member of $L^p(\P)$ for some $p>d$.  
Then  for all  $\xi\in\Uset$, 
\begin{align}
\gpp^\infty(\xi)&=\sup\Big\{E^\mu[V]:\mu\in\cM_s(\Omega_\ell),\,\mu_0\ll\P,E^\mu[V^-]<\infty,E^\mu[Z_1]=\xi\Big\}.\label{eq:g:H-var:p2p}
\end{align}
\end{theorem}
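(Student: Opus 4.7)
The plan is to reduce the point-to-point formula \eqref{eq:g:H-var:p2p} to the already-proved point-to-line formula \eqref{eq:g:H-var} via tilt-velocity duality, and then extract the constraint $E^\mu[Z_1]=\xi$ from the resulting saddle-point structure. For $h\in\R^d$ introduce the tilted potential $V_h(\w,z_{1,\ell}) = V(\w,z_{1,\ell}) + h\cdot z_1$; since $|h\cdot z_1|$ is bounded and $V\in\cL$, so is $V_h$. Summing $V_h$ along an admissible path produces the tilted point-to-level sum $\sum V + h\cdot x_n$, so Theorem~\ref{th:g=Hstar} applied to $V_h$ yields
\[
\gpl^\infty(V_h)=\sup_{\mu\in\cS}\{E^\mu[V]+h\cdot E^\mu[Z_1]\},\qquad \cS=\{\mu\in\cM_s(\Omega_\ell):\mu_0\ll\P,\,E^\mu[V^-]<\infty\}.
\]
The tilt-velocity duality \eqref{eq:tilt-velocity} extends from $\Vw+h\cdot z$ to the present setting because the tilt still telescopes to $h\cdot x_n$, and the continuity of $\gpp^\infty$ up to the boundary supplied by Remark~\ref{rmk:dir-iid} gives $\gpp^\infty(\xi)=\inf_h[\gpl^\infty(V_h)-h\cdot\xi]$ for every $\xi\in\Uset$. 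Combining these,
\[
\gpp^\infty(\xi)=\inf_{h}\sup_{\mu\in\cS}\{E^\mu[V]+h\cdot(E^\mu[Z_1]-\xi)\}.
\]
The direction $(\ge)$ of \eqref{eq:g:H-var:p2p} is immediate from the minimax inequality: exchanging $\inf$ and $\sup$ yields an inner $\inf_h$ which equals $-\infty$ unless $E^\mu[Z_1]=\xi$, so $\gpp^\infty(\xi)\ge\sup\{E^\mu[V]:\mu\in\cS,\,E^\mu[Z_1]=\xi\}$.

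For the reverse inequality I would exhibit a maximizing measure via the empirical distribution along a last-passage geodesic. For each $n$ let $(x_0^*,\dots,x_{n+\ell-1}^*)$ be a maximizer of $\Gpp^\infty_{0,(n),\xhat_n(\xi)}$ with $x_0^*=0$ and $x_n^*=\xhat_n(\xi)$, with increments $z_j^*=x_j^*-x_{j-1}^*$, and set
\[
\mu_n^\w=\frac1n\sum_{k=0}^{n-1}\delta_{(T_{x_k^*}\w,\,(z_{k+1}^*,\dots,z_{k+\ell}^*))},\qquad \bar\mu_n=\E[\mu_n^\cdot].
\]
By construction $E^{\bar\mu_n}[V]=n^{-1}\E[\Gpp^\infty_{0,(n),\xhat_n(\xi)}]\to\gpp^\infty(\xi)$ (using Theorem~\ref{th:p2p} and the $L^1$ convergence supplied by the lattice-animal bound of \cite{Gand-Kest-94} in the $L^{d+\e}$ directed i.i.d.\ setting), and $E^{\bar\mu_n}[Z_1]=n^{-1}\xhat_n(\xi)\to\xi$. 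A telescoping computation gives $|E^{\bar\mu_n}[f(\w,z_{1,\ell-1})]-E^{\bar\mu_n}[f(T_{z_1}\w,z_{2,\ell})]|=O(1/n)$ for bounded $f$, so every weak subsequential limit $\mu^*$ satisfies \eqref{Sell-1} and therefore lies in $\cM_s(\Omega_\ell)$. Absolute continuity $\bar\mu_n|_\Omega\ll\P$ holds for each fixed $n$ since $\{\w:T_{x_k^*(\w)}\w\in A\}\subset\bigcup_{y}T_y^{-1}A$ is $\P$-null whenever $\P(A)=0$.

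The main obstacle is that weak convergence does not automatically preserve absolute continuity, so upgrading $\bar\mu_n|_\Omega\ll\P$ to $\mu^*|_\Omega\ll\P$ requires an additional argument. I would prove uniform integrability of the Radon--Nikodym densities $d(\bar\mu_n|_\Omega)/d\P$ restricted to any bounded cylinder set; the $L^{d+\e}$ moment hypothesis together with the locality of $V$ and the lattice-animal bound control the expected number of sites of the geodesic carrying a given local configuration. Once $\mu^*\in\cS$ is secured, the a.s.\ convergence and the uniform integrability above deliver $E^{\mu^*}[V]=\gpp^\infty(\xi)$ and $E^{\mu^*}[Z_1]=\xi$, which completes $(\le)$ and hence \eqref{eq:g:H-var:p2p}.
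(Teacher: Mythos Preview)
Your lower bound $(\ge)$ via tilt--velocity duality and the minimax inequality is clean and correct; that part is a legitimate alternative to the paper's route.

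The genuine gap is in your upper bound $(\le)$. You correctly identify the obstacle---weak limits of absolutely continuous measures need not be absolutely continuous---but you do not resolve it. The sentence ``I would prove uniform integrability of the Radon--Nikodym densities\dots the $L^{d+\e}$ moment hypothesis together with the locality of $V$ and the lattice-animal bound control the expected number of sites of the geodesic carrying a given local configuration'' is not an argument; it is a hope. Controlling the density $d(\bar\mu_n|_\Omega)/d\P$ amounts to controlling the probability that the geodesic passes through a site with a prescribed local environment, uniformly in $n$, and nothing in the lattice-animal bound or the $L^{d+\e}$ hypothesis gives that. This is exactly the kind of quenched-measure tightness that is notoriously hard in these models. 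A secondary issue, also not addressed, is passing $E^{\bar\mu_n}[V]$ to the limit for unbounded $V$.

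The paper avoids all of this. Its proof of \eqref{eq:g:H-var:p2p} does not construct a maximizing measure at all. Instead it starts from the already-established positive-temperature formula \eqref{eq:rhs-var-p2p} (Theorem~5.3 of \cite{Ras-Sep-14}), observes via the entropy representation \eqref{entr-5}--\eqref{eq:interesting} that any $\mu$ with $\Hbar(\mu)<\infty$ must lie in $\cM_s(\Omega_\ell)$, and then uses the crucial two-sided bound $0\le\Hbar(\mu)\le\log|\range|$ valid for every $S$-invariant $\mu$ with $\mu_0\ll\P$ (Lemma~\ref{lm:S} supplies the fixing kernel). That bound sandwiches $\Lapp^\beta(\xi)$ within $\beta^{-1}\log|\range|$ of the right-hand side of \eqref{eq:g:H-var:p2p}; combined with \eqref{eq:g-Lambda} and $\beta\to\infty$, this yields both inequalities at once. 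The absolute-continuity difficulty never arises because the class of admissible measures is the same for every $\beta$ and is inherited directly from the positive-temperature theorem.
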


Note that even if $V$ is a function on $\Omega$ only,   variational formula 
\eqref{eq:g:H-var:p2p} uses measures on $\Omega_\ell$ with  $\ell\ge 1$
in order  for the mean step condition 
$E^\mu[Z_1]=\xi$ to make sense.  
 Remark \ref{rmk:tight} below explains  
why Theorem \ref{th:ent2:lpp} is stated only for the directed i.i.d.~$L^{d+\e}$  case. 
In   the general setting  of 
 Theorem \ref{th:g=Hstar} the 
   point-to-point formula \eqref{eq:g:H-var:p2p} is valid 
for  compact $\Omega$
and $\xi\in\ri\Uset$.  
It  can be derived by applying the argument  given below   to the results 
in \cite{Ras-Sep-12-arxiv}.

\medskip
 
 To prepare for the proofs   
we   discuss the   positive temperature setting.   In the end 
  we take $\beta\to\infty$  to prove Theorems \ref{th:g=Hstar}--\ref{th:ent2:lpp}.
  Recall the random walk kernel $p$ from the beginning of Section \ref{sec:free} with ellipticity constant $\delta=\min_{z\in\range}p(z)>0$.  It  acts  as a Markov  transition kernel   on  $\Omega_\ell$ through 
 	\begin{align}\label{eq:def:p}
		&p(\wz,S_z\wz)=p(z) \, \text{ for }z\in\range\text{ and }\wz=(\w,z_{1,\ell})\in\Omega_\ell.    
	\end{align}
This   kernel defines  a  joint Markovian evolution  $(T_{X_n}\w, Z_{n+1,n+\ell})$  of the 
environment  seen by the  $p$-walk $X_n$ and the  vector  
$Z_{n+1,n+\ell}=(Z_{n+1},\dotsc, Z_{n+\ell})$ of the next $\ell$ 
steps   $Z_k=X_k-X_{k-1}$   of the walk.    As before if $\ell=0$ then $S_z=T_z$ and the Markov chain is $T_{X_n}\w$.  


We define an entropy $\Hbar(\mu)$ for probability measures $\mu\in\M_1(\Omega_\ell)$,     associated to this Markov chain and    the
  background measure $\P$.  
 If  $q(\wz,\cdot\,)$ is  a Markov kernel on $\Omega_\ell$
   such that $q(\wz,\cdot\,)\ll p(\wz,\cdot\,)$ $\mu$-a.s., then  $q(\wz,\cdot\,)$ is 
 supported on   $\{S_z\wz\}_{z\in\range}$ and the familiar relative entropy is  
\[H(\mu\times q\,\vert\,\mu\times p)
	= \int_{\Omega_\ell} \sum_{z\in\range}q(\wz,S_z\wz)\,\log\frac{q(\wz,S_z\wz)}{p(\wz,S_z\wz)}\,\mu(d\wz).\]
Set 
 	\begin{align}\label{eq:def:H}
		\Hbar(\mu)=
			\begin{cases}
				\ddd\inf_{q:\,\mu q=\mu}  H(\mu\times q\,|\,\mu\times p) &\text{if }\mu_0\ll\P\\
				\infty&\text{otherwise,}
			\end{cases}
	\end{align}
where the infimum is over Markov kernels $q$ on $\Omega_\ell$ that fix $\mu$, i.e.\ $\mu q(\cdot)\equiv\int q(\wz,\cdot)\mu(d\wz)=\mu(\cdot)$.
 The function  $\Hbar: \cM_1(\Omega_\ell)\to[0,\infty] $ is convex  \cite[Sect.~4]{Ras-Sep-11}.

\begin{remark} 
When $\mu\in\M_s(\Omega_\ell)$ for some $\ell\ge 1$ and $\mu_0\ll\P$,  the minimizing kernel in \eqref{eq:def:H}  is the one  defined in \eqref{q-ker1}, and 
\be\label{H79}   \Hbar(\mu)= H(\mu\,\vert\,\mu_{\ell-1}\otimes p)
= \int_{\Omega_\ell} \,\mu(d\w, dz_{1,\ell}) \log\frac{\mu_\ell(z_\ell\,\vert\,\w,z_{1,\ell-1})}{p(z_\ell)}    
\ee  
where $\mu_{\ell-1}$ is the distribution of $(\w, Z_{1,\ell-1})$ under $\mu$ and $\mu_{\ell-1}\otimes p$ is the product measure on $\Omega_\ell$.  

Here is the argument.  
Let $q(\eta, S_z\eta)=q_z(\eta)$  be  an arbitrary kernel  that fixes $\mu$ and is  supported on   $\{S_z\wz\}_{z\in\range}$.  The first equality below is  the convex dual representation of relative entropy 
(see for example Theorem 5.4 in \cite{Ras-Sep-15-ldp}).   In the second last equality use both $q$-invariance and  \eqref{Sell-1}. 
\begin{align*}  
&H(\mu\times q\,\vert\, \mu\times p)\\
&= 
\sup_{h\in b\Omega_\ell^2} \biggl\{   \sum_z \int\limits_{\Omega_\ell}  h(\eta, S_z\eta)\,q_z(\eta) \,\mu(d\eta)
 -   \log  \sum_z  p(z) \int\limits_{\Omega_\ell}  e^{h(\eta, S_z\eta)} \,\mu(d\eta)
\biggr\} \\
&\ge  \sup_{f\in b\Omega_\ell} \biggl\{   \sum_z \int\limits_{\Omega_\ell}  f( S_z\eta)\,q_z(\eta) \,\mu(d\eta)
-   \log  \sum_z  p(z) \int\limits_{\Omega_\ell}  e^{f( T_{z_1}\w, (z_{2,\ell}, z))} \,\mu(d\w, dz_{1,\ell})
\biggr\} \\
&=  \sup_{f\in b\Omega_\ell} \biggl\{  \;  \int\limits_{\Omega_\ell}  f  \,d\mu 
-   \log  \sum_z  p(z) \!\! \int\limits_{\Omega_{\ell-1}} \!\! e^{f( \w, (z_{1,\ell-1}, z))} \,\mu_{\ell-1}(d\w, d z_{1,\ell-1})
\biggr\}  \\
&= \; H(\mu\,\vert\, \mu_{\ell-1}\otimes p).\tag*{$\triangle$}  
\end{align*} 
\end{remark}

We   state the measure variational formulas for point-to-level and point-to-point polymers in positive temperature.   These are slightly altered versions of   Theorem 2.3 of \cite{Ras-Sep-Yil-13}   and    Theorem 5.3 of \cite{Ras-Sep-14}.  
 
 \begin{theorem}\label{th:var9}  
Let $\P$ be ergodic, $\ell\in\Z_+$,  $0<\beta<\infty$,  and assume $V\in\cL$. 
Then 
	\begin{align} 
	\Lapl^\beta&= \sup\Big\{ E^\mu[V]-\beta^{-1}\Hbar(\mu):\mu\in\cM_s(\Omega_\ell),\,\mu_0\ll\P, \,  E^\mu[ V^- ] < \infty\Big\} .\label{eq:var9.1}
	\end{align}
 \end{theorem}
 
 The quantity inside the  braces cannot be $\infty-\infty$ for the following reason.  By Proposition \ref{lm:S}  every  $\mu\in\M_s(\Omega_\ell)$  is fixed by some  kernel $q$ supported on shifts.  Thereby,  if also  $\mu_0\ll\P$,   the definition of entropy gives 
\be\label{H-9} 0\le\Hbar(\mu)\le \log\ellc^{-1} .    \ee

As above, we state the point-to-point version only for the directed i.i.d.~$L^{d+\e}$ case
defined in  Remark \ref{rmk:dir-iid}.     See Remark \ref{rmk:tight} below for an explanation.

\begin{theorem}\label{th:var10}
  Repeat the assumptions of Theorem \ref{th:ent2:lpp}.  
Then  for  $0<\beta<\infty$ and   $\xi\in\Uset$, 
\begin{align}
\begin{split}
\gpp^\beta(\xi)=\sup\Big\{E^\mu[V]-\beta^{-1}\Hbar(\mu):\,&\mu\in\cM_s(\Omega_\ell),\,\mu_0\ll\P,\\
&E^\mu[V^-]<\infty,E^\mu[Z_1]=\xi\Big\}.
\end{split}\label{eq:var10.1}
\end{align}
\end{theorem}




We illustrate formulas  \eqref{eq:var9.1}  and \eqref{eq:var10.1}
in the case of weak disorder.  

\begin{example}\label{ex:weak2}({\it Directed polymer in weak disorder})
We identify first  the measure $\mu$ that maximizes variational formula 
\eqref{eq:var9.1}  for the directed polymer in weak disorder,
with  potential $V(\w,z)=\Vw(\w)+h\cdot z=\w_0+h\cdot z$ and small enough $0<\beta<\infty$.  
This measure will be invariant for the Markov transition implicitly contained 
in equation \eqref{weak:W}.  
We continue with the notation and assumptions 
from Example \ref{ex:weak1}.  

To define the measure  we need  a backward path and a martingale in the reverse time direction.  
The backward path $(x_k)_{k\le 0}$ satisfies $x_0=0$ and 
$z_k=x_k-x_{k-1}\in\range$, and the corresponding  martingale is 
\[  
W^-_n = e^{-n(\lambda(\beta)+\kappa(\beta h))} \sum_{x_{-n,0}} \abs{\range}^{-n}\,e^{\beta \sum_{k=-n}^{-1}\w_{x_k} - \beta h\,\cdot\, x_{-n}}.
\]  
$W^-_n$  is the same as $W_n$ composed with the reflection $\w_x\mapsto \w_{-x}$, 
and so \eqref{weak:ui} guarantees also $W^-_n\to W^-_\infty$ with the same 
properties.     (Recall that in this example we took  the uniform kernel $p(z)=\abs{\range}^{-1}$.)
  
 By  \eqref{weak:W}  
\[
q^h_0(\w, z)= p(z)\, e^{\beta\w_0-\lambda(\beta)+\beta h\cdot z-\kappa(\beta h)}
\frac{W_\infty(T_z\w)}{W_\infty(\w)}
\]
 defines a stochastic kernel from
$\Omega$ to $\range$.  Define  a Markov transition kernel  on $\Omega\times\range$   by 
\beq q^h((\w, z_{1}), (T_{z_1}\w, z))= q^h_0(T_{z_1}\w, z) .
\label{qtheta}\eeq  
Define  the  probability  
measure $\mu^h$ on   $\Omega\times\range$  as follows. For a bounded Borel  function $\varphi$
\[  \sum_{z\in\range} \int_\Omega \varphi(\w, z)\,\mu^h(d\w, z) = 
\sum_{z\in\range} \int_\Omega  W_\infty^-(\w)\,W_\infty(\w) \,q^h_0(\w, z)\, \varphi(\w, z) \,\P(d\w). \] 
Using the 1-step decomposition of $W_\infty^-$ (analogue of \eqref{weak:W}) one shows  that  
$q^h$ fixes $\mu^h$. 

Let us strengthen assumption \eqref{weak:ui} to also include 
$\E[W_\infty\log^+ W_\infty]<\infty$.  This is true for small enough $\beta$.   Then the entropy can be   calculated: 
\begin{align*}
&H(\mu^h\times q^h\vert\mu^h\times p)\\
&\qquad=\beta E^{\mu^h}[V] -\lambda(\beta)-\kappa(\beta h) 
+\sum_z \int \mu^h_0(d\w)q^h_0(\w, z) \log \frac{W_\infty(T_z\w)}{W_\infty(\w)}\\
&\qquad=\beta E^{\mu^h}[V] -\lambda(\beta)-\kappa(\beta h) 
\end{align*}
because the last term of the middle member vanishes by the invariance. 
$E^{\mu^h}[V] $ is finite  because, by independence and Fatou's lemma,  
\begin{align*} E^{\mu^h}(\abs{\w_0}) =  \E( \abs{\w_0}W_\infty^-\,W_\infty)\le \varliminf_{n\to\infty}
 \E( \abs{\w_0}W_n)
\end{align*} 
while the last sequence is bounded, as can be seen by utilizing the 1-step decomposition \eqref{weak:W} and by taking $\beta$ in the interior of the region $\lambda(\beta)<\infty$.   
 Consequently 
\be\label{weak:H9}  
E^{\mu^h}[V]  - \beta^{-1} H(\mu^h\times q^h\vert\mu^h\times p)
=  \beta^{-1}(\lambda(\beta)+\kappa(\beta h) )  = \gpl^\beta(h).  
\ee
The pair $(\mu^h, q^h)$ is the unique one that satisfies \eqref{weak:H9}, by virtue
of the strict convexity of entropy.  

The maximizer for the point-to-point formula  \eqref{eq:var10.1} can also be found. 
Let $\gpp^\beta(\xi)$ be as in \eqref{p2pV_0} with $V_0(\w)=\w_0$.  
 Given $\xi\in\ri\Uset$,
$h\in\R^d$ can be chosen so that $\nabla\kappa(\beta h)=\xi$.  If $\beta$ is small
enough,  uniform integrability of the martingales 
$W_n$ can be ensured, and thereby $\mu^h$ and $q^h$ are again well-defined. 
The choice of $h$ implies that $E^{\mu^h}[Z_1]=\xi$, and we can turn \eqref{weak:H9}
into 
\begin{align*}
E^{\mu^h}[V_0]  - \beta^{-1} H(\mu^h\times q^h\vert\mu^h\times p)
&=  -  h\cdot E^{\mu^h}[Z_1] + \beta^{-1}(\lambda(\beta)+\kappa(\beta h) )  \\
& = \beta^{-1}\lambda(\beta)  - \beta^{-1}\kappa^*(\xi)= \gpp^\beta(\xi). 
\end{align*}
The last equality can be seen for example from duality \eqref{eq:tilt-velocity}.  

Markov chain    \eqref{qtheta}  appeared in  \cite{come-yosh-aop-06}. 
Under some restrictions on the environment and with $h=0$, 
  \cite{more-10} showed that    $\mu^0_0$  is the  limit of the environment seen   by the  particle.   \hfill $\triangle$ 

\end{example} 

\smallskip

We prove the theorems of this section, beginning with the positive temperature statements.

 \begin{proofof}{of Theorems \ref{th:var9} and \ref{th:var10}}
  Let   $V: \Omega_\ell\to\R$ be a member of $\cL$ 
 (Definition \ref{def:cL}), $\P$  ergodic and $0<\beta<\infty$.  
 Theorem 2.3 of \cite{Ras-Sep-Yil-13}   
gives the  variational formula  
	\begin{align}
	\Lapl^\beta&= \sup\Big\{ E^\mu[\min(V,c)]-\beta^{-1}\Hbar(\mu):\mu\in\cM_1(\Omega_\ell),\ c>0\Big\} .\label{eq:Lambda:H-var}
	\end{align}
 Note that \cite{Ras-Sep-Yil-13} used the uniform kernel $p(z)=\abs{\range}^{-1}$ but this makes no difference to the proofs, and in any case the kernel can be included in the potential to extend the result to an arbitrary kernel supported on $\range$.     We convert \eqref{eq:Lambda:H-var} into \eqref{eq:var9.1}  in a few steps.  

The measure  $\mu=\P\otimes\alpha$ with  $\alpha(z_{1,\ell})=p(z_{1,\ell})$
satisfies  $\mu\in\M_s(\Omega_\ell)$,  $\mu p=\mu$,  and $\Hbar(\mu)=0$.
Since $V(\cdot\,,z_{1,\ell})\in L^1(\P)$,
 this gives the finite lower bound  $\gpl^\beta\ge E^{\P\otimes\alpha}[V]$ for 
\eqref{eq:Lambda:H-var}.  (If $\ell=0$ the $\alpha$-factor is not there.)  Hence   we can  restrict
the supremum  in \eqref{eq:Lambda:H-var} to   $\mu$ such that $E^\mu[ V^- ] + \Hbar(\mu) < \infty$.  Since  $E^\mu[V]$ is   well-defined in $(-\infty,\infty]$  for all such  $\mu$, we can drop the 
truncation at  $c$.

 Entropy   has the following 
  representation:    for $\mu\in\M_1(\Omega_\ell)$, 
\be\label{entr-5}   
\inf_{q: \mu q=\mu}H(\mu\times q\,|\,\mu\times p)
=-\inf_{f\in b\Omega_\ell} E^\mu\big[\log\sum_zp(z)e^{f\circ S_z-f}\big].
\ee
The infimum on the left  is over Markov kernels $q$  on   $\Omega_\ell$ that fix $\mu$.  
$S_z$ is the shift mapping  defined in \eqref{eq:Sz}.
For a proof of \eqref{entr-5}  
see Theorem 2.1 of \cite{Don-Var-76}, Lemma 2.19 of \cite{Sep-93-ptrf-1},  or Theorem 14.2 of \cite{Ras-Sep-15-ldp}.

Recall the definition of $\Hbar$ in \eqref{eq:def:H}.  From the inequality 
\[\log\sum_zp(z)e^{f\circ S_z-f}\le \max_z \{f\circ S_z-f\}\le\log\sum_zp(z)e^{f\circ S_z-f}+\log\ellc^{-1}\]
follows, for $\mu_0\ll\P$,  
\begin{align}\label{eq:interesting}
\Hbar(\mu)-\log\ellc^{-1}\le -\inf_{f\in b\Omega_\ell} E^\mu\big[\max_z \{f\circ S_z-f\}\big]\le \Hbar(\mu).
\end{align}
If there exists   $f\in b\Omega_\ell$ such that 
$E^\mu[\max_z \{f\circ S_z-f\}]<0$
then replacing $f$ by $cf$ and taking $c\to\infty$ shows that  the infimum over $f$ is actually $-\infty$. This makes $\Hbar(\mu)=\infty$. 
Thus, relevant measures $\mu$ in \eqref{eq:Lambda:H-var} are ones that satisfy \eqref{eq:max>} and 
so  we   can insert the restriction $\mu\in\cM_s(\Omega_\ell)$ into \eqref{eq:Lambda:H-var}.   \eqref{eq:Lambda:H-var} has been converted into \eqref{eq:var9.1}.


 Assuming the directed i.i.d.~$L^{d+\e}$ setting described in Theorem \ref{th:ent2:lpp}, 
  Theorem 5.3 of \cite{Ras-Sep-14} gives
the point-to-point version:   for $\xi\in\Uset$,  
 \be \label{eq:rhs-var-p2p}
\Lapp^\beta(\xi)=\sup\bigl\{ E^\mu[\min(V,c)]-\beta^{-1}\Hbar(\mu):\mu\in\cM_1(\Omega_\ell),\ E^\mu[Z_1]=\xi,\ c>0\bigr\}.
 \ee
 This is converted  into \eqref{eq:var10.1} by the same reasoning used above. \hfill\hfill\qed
\end{proofof}


\begin{remark}\label{rmk:tight}  We can state 
 \eqref{eq:rhs-var-p2p} only for  the directed i.i.d.~$L^{d+\e}$ setting   for the following reason.  The point-to-level
formula \eqref{eq:Lambda:H-var} is proved directly in \cite{Ras-Sep-Yil-13}.
By contrast,   the point-to-point formula  \eqref{eq:rhs-var-p2p} is derived in \cite{Ras-Sep-14}
  via a contraction applied to a 
quenched large deviation principle (LDP) for polymer measures.   This LDP is proved in  \cite{Ras-Sep-Yil-13}.  
In the general setting  the upper bound of this LDP  has been proved only for compact 
sets  (weak LDP).  
However,  in the 
directed i.i.d.\ case the LDP 
is a full LDP, 
and the contraction works without additional assumptions.    Consequently in   the directed i.i.d.~$L^{d+\e}$ setting   \eqref{eq:rhs-var-p2p} is valid for Polish spaces $\Omega$,  but in the general setting $\Omega$ would need to be compact.  
   \hfill $\triangle$
 \end{remark}

 \begin{proofof}{of Theorems \ref{th:g=Hstar} and \ref{th:ent2:lpp}}
 Take $\beta\to\infty$ in \eqref{eq:var9.1} and \eqref{eq:var10.1}, utilizing bounds 
\eqref{H-9} and \eqref{eq:g-Lambda}. \hfill\hfill\qed
\end{proofof}

\begin{proofof}{of Proposition \ref{lm:S}}    Each  $f$ below  is a $b\Omega_\ell$   test function on  the appropriate space $\Omega_\ell$.   
  First we work with the case $\ell\ge1$.  We argue the implications (a.i)$\Rightarrow$(a.ii)$\Rightarrow$(a.iii)$\Rightarrow$(a.iv)$\Rightarrow$(a.i).

\medskip

(a.i)$\Rightarrow$(a.ii):  An $S$-invariant probability measure $\nu$ 
on 
  $\Omega_\N=\Omega\times\range^{\bN}$   that extends $\mu$ can be defined   by writing, for any $m\ge \ell$, 
 \be 
\int f(\w, z_{1,m})\,d\nu = \sum_{z_{1,m}} \int_\Omega  f(\w, z_{1,m})\,
\prod_{i=\ell+1}^m  q_{z_{i}}(T_{x_{i-\ell-1}}\w, z_{i-\ell, i-1}) \, \mu(d\w, z_{1,\ell}).  
\label{nu1}\ee

\medskip

(a.ii)$\Rightarrow$(a.iii):   From the $S$-invariance of $\nu$, 
\begin{align*}  &E^\mu\big[\max_z f(T_{Z_1}\w,(Z_{2,\ell},z))\big]=E^\nu\big[\max_z f(T_{Z_1}\w,(Z_{2,\ell},z))\big]\\
&\qquad 
=E^\nu\big[\max_z f(\w,(Z_{1,\ell-1},z))\big]  
\ge  E^\nu\big[  f(\w,Z_{1,\ell})\big] =E^\mu[f].  
\end{align*}


(a.iii)$\Rightarrow$(a.iv):  If $f$ is only a function of $(\w,z_{1,\ell-1})$, then $f(S_z(\w,z_{1,\ell}))=f(T_{z_1}\w,z_{2,\ell})$ does not  depend on $z$. 
\eqref{eq:max>} then implies  
$E^\mu\big[f(T_{Z_1}\w,Z_{2,\ell})]\ge E^\mu[ f ]$.  
Replacing $f$ by $-f$ makes this    an equality and \eqref{Sell-1}  follows.

\medskip

(a.iv)$\Rightarrow$(a.i):    Use property (a.iv)   in the second equality below to show that $\mu q=\mu$.  
\begin{align*}
&\int_{\Omega\times\range^{\ell}}  \sum_z  q_z(\w, z_{1,\ell}) f(T_{z_1}\w, (z_{2,\ell},z)) 
\, \mu(d\w, dz_{1,\ell})\\
&= \sum_z \int_{\Omega\times\range^{\ell}}    f(T_{z_1}\w, (z_{2,\ell},z)) 
\, \mu_\ell(z\,\vert\, T_{z_1}\w, z_{2,\ell})\, \mu(d\w, dz_{1,\ell})\\
&= \sum_z \int_{\Omega\times\range^{\ell}}    f(\w, (z_{1,\ell-1},z)) 
\, \mu_\ell(z\,\vert\, \w, z_{1,\ell-1})\, \mu(d\w, dz_{1,\ell})\\
  &=  \int_{\Omega\times\range^{\ell}}    f(\w, z_{1,\ell}) 
 \, \mu(d\w, dz_{1,\ell}) . 
\end{align*}

\medskip

We turn to the case $\ell=0$ and show    (b.i)$\Rightarrow$(b.ii)$\Rightarrow$(b.iii)$\Rightarrow$(b.i).


(b.i)$\Rightarrow$(b.ii):  Now define $\nu$ on 
  $\Omega\times\range^{\bN}$  by
\[E^\nu[f(\w,Z_{1,m})]=\sum_{z_{1,m}}\int f(\w,z_{1,m}) \prod_{i=1}^{m} q_{z_i}(T_{x_{i-1}}\w)\,\mu(d\w).\]


(b.ii)$\Rightarrow$(b.iii):   Analogously to (a.ii)$\Rightarrow$(a.iii) above, 
\begin{align*}  E^\mu\big[\max_z f(T_{z}\w)\big]&=E^\nu\big[\max_z f(T_{z}\w)\big] \ge  E^\nu\big[  f(T_{Z_1}\w)\big]=E^\nu[f(\w)] =E^\mu[f].  
\end{align*}

\medskip

(b.iii)$\Rightarrow$(b.i):   Observe that for   $f\in b\Omega$ we have  
\[E^\mu\big[\max_z \{f\circ T_z-f\}\big]\le E^\mu\big[\log\sum_zp(z)e^{f\circ T_z-f}\big]+\log\ellc^{-1}.\]
By  assumption  \eqref{eq:max>}   the left-hand side is nonnegative.
Then by \eqref{entr-5}
\begin{align*}
\inf\{H(\mu\times q\,|\,\mu\times p):\mu q=\mu\}
=-\inf_{f\in b\Omega} E^\mu\big[\log\sum_zp(z)e^{f\circ T_z-f}\big]\le\log\ellc^{-1}. 
\end{align*}
Since the infimum is not $+\infty$  there must exist a Markov kernel $q$ that fixes $\mu$ and for which $H(\mu\times q\,|\,\mu\times p)<\infty$.
This implies that for $\mu$-a.e.\ $\w$ the kernel is supported on $\{T_z\w:z\in\range\}$.  \hfill\hfill\qed
\end{proofof} 

\section{Periodic environments} \label{sec:finite}

The case of finite $\Omega$ provides explicit  illustration of the theory
developed in the paper.   The point-to-level limits and  solutions to the  variational formulas
come from  Perron-Frobenius theory, the classical  theory for $0<\beta<\infty$
and the max-plus theory for $\beta=\infty$. 
(See \cite{bacc-cohe-etal-book, berm-plem-book, heid-olds-woud, sene-book} for expositions.)
    We consider a   potential 
$V(\w,z)=V_0(\w)+h\cdot z$ for $(\w,z)\in\Omega\times\range$, $h\in\R^d$.

Let  $\Omega$ be a finite set of  $m$ elements.  As all along, 
  $\{T_x\}_{x\in\gr}$ is a group of commuting bijections  on $\Omega$ that act irreducibly. 
 That is, for each pair $(\w,\w')\in\Omega\times\Omega$ there exist $z_1,\dotsc, z_k\in\range$ such that  $T_{z_1+\dotsm+ z_k}\w=\w'$.  
The ergodic probability measure
is $\P(\w)=m^{-1}$. 

A basic example is a periodic environment indexed by $\Z^d$.  
Take a vector $a>0$ in $\Z^d$ (coordinatewise inequalities),
define the rectangle  $\rectan=\{ x\in\Z^d:  0\le x<a \}$,  
  fix a finite configuration $(\bar\w_x)_{x\in\rectan}$, and 
then extend $\bar\w$ to all of $\Z^d$  periodically:  $\bar\w_{x+k\circ a}=\bar\w_x$ for 
$k\in\Z^d$, where $k\circ a=(k_ia_i)_{1\le i\le d}$ is the coordinatewise product of 
two vectors.   
Irreducibility holds for example if $\range$ contains $\{e_1,\dotsc, e_d\}$.  

\subsection{Case $0<\beta<\infty$}  
We take $\beta=1$ and drop it from the notation.  
 Define a nonnegative irreducible  matrix indexed by $\Omega$ by  
\be\label{pf-R}   \amatri_{\w, \w'}=   \sum_{z\in\range}p(z)\,  \one\{ T_z\w=\w'\,\}e^{V_0(\w)+h\cdot z}  \quad \text{for $\w,\w'\in\Omega$.}
 \ee
Let $\rho$ be the Perron-Frobenius eigenvalue (spectral radius) of $\amatri$.   Then by standard asymptotics   the  
 limiting point-to-level free energy is
\be\begin{aligned}
\gpl(h) &= \lim_{n\to\infty} n^{-1}\log \sum_{x_{0,n}: \,x_0=0} p(x_{0,n}) 
e^{\sum_{k=0}^{n-1} V_0(T_{x_k}\w) + h\cdot x_n}  \\
&=\lim_{n\to\infty} n^{-1}\log \sum_{\w'\in\Omega} \amatri^n_{\w, \w'}
=\log\rho. 
\end{aligned}\label{pf:gpl}\ee

On a finite $\Omega$ every cocycle is a gradient (proof left to the reader).   Hence we can replace the general cocycle $F$ with a gradient $F(\w, 0, z)=f(T_z\w)-f(\w)$ and write the cocycle variational formula   \eqref{eq:Lambda:K-var} as  
\be\label{pf:var-h}  
\gpl(h)=\inf_{f\in\R^\Omega} \,\max_\w\;  \log \sum_{z\in\range} p(z)e^{\Vw(\w)+  h\cdot z+f(T_z\w)-f(\w)}.  
\ee
This is now exactly the same as the following   textbook characterization of the Perron-Frobenius eigenvalue:  
\be\label{pf:char1}   \rho = \inf_{\varphi\in\R^\Omega:\,\varphi>0} \;   \max_\w  \; \frac1{\varphi(\w)} \sum_{\w'}  A_{\w, \w'} \varphi(\w').  \ee

 Let  $\lev$ and $\rev$ be the  left and 
right (strictly positive)  Perron-Frobenius eigenvectors  of $A$ normalized so that 
$\sum_{\w\in\Omega} \lev(\w)\rev(\w)=1$.  For each  $\w\in\Omega$ the left eigenvector equation is 
\be\label{lev}  \sum_{z\in\range}p(z)\,  e^{V_0(T_{-z}\w)+h\cdot z} \lev(T_{-z}\w)=\rho\lev(\w)
\ee
and the right eigenvector equation  is 
\be\label{rev}  e^{V_0(\w)}\sum_{z\in\range} p(z) e^{h\cdot z} \rev(T_z\w)=\rho\rev(\w). 
\ee
The right eigenvector equation \eqref{rev}  says   that the gradient  
\be\label{pf:F} F(\w,x,y)=\log\rev(T_y\w)-\log\rev(T_x\w)\ee
minimizes in \eqref{pf:var-h} without
the maximum over $\w$  (the right-hand side of  \eqref{pf:var-h} is  constant in $\w$).   In other words, $F$ is a corrector for $\gpl(h)$.    Compare this to \eqref{eq:Kvar:minbeta}. 

Define a probability measure on $\Omega$ by $\mu_0(\w)=\lev(\w)\rev(\w)$.  
The  left eigenvector equation \eqref{lev}  says that $\mu_0$ is invariant
under the stochastic kernel 
\be\label{pf-q}  
q_0(\w,\w')=  \sum_{z\in\range}p(z)\,  \one\{ T_z\w=\w'\}e^{V_0(\w)+h\cdot z+ F(\w,0,z)-\gpl(h)}, \quad \w, \w'\in\Omega. 
\ee
Using this one can check that the measure 
\[  \mu(\w, z_1) = p(z) {\mu_0(\w)}    
e^{V_0(\w)+h\cdot z_1+ F(\w,0,z_1)-\gpl(h)}
\]
is a member of  $\M_s(\Omega\times\range)$ 
   and invariant under the kernel  
\[q( (\w, z_1),  (T_{z_1}\w, z)) = 
 p(z)   e^{V_0(T_{z_1}\w)+h\cdot z+ F(T_{z_1}\w,0,z)-\gpl(h)}. \]
Another  computation checks that 
\[  E^\mu[V_0(\w)+h\cdot Z_1]- H(\mu\times q\,\vert \,\mu\times p) 
= \gpl(h). \]
Hence $\mu$ is a maximizer in the entropy variational formula \eqref{eq:Lambda:H-var}. 



Assume additionally  that matrix $\amatri$ is aperiodic on $\Omega$.  Then $A$ is primitive, that is,  $\amatri^n$ is strictly positive  for large enough $n$.    Perron-Frobenius asymptotics (for example, Theorem 1.2  in \cite{sene-book}) give the   Busemann function 
$\Bpl^h$ of \eqref{buse4}.  
\begin{align*}
& \Bpl^h(\w,0,z)
=\lim_{n\to\infty} \biggl\{  \log \sum_{x_{0,n}: \, x_0=0} p(x_{0,n})   
e^{\sum_{k=0}^{n-1}V_0(T_{x_k}\w)+h\cdot x_n}  \\
&\qquad\qquad \qquad\qquad 
\;-\;  
\log \sum_{x_{0,n-1}: \,x_0=z} p(x_{0,n-1})   
e^{\sum_{k=0}^{n-2}V_0(T_{x_k}\w)+h\cdot (x_{n-1}-z)} \biggr\} \\
& =\lim_{n\to\infty} \biggl\{ \log \sum_{\w'\in\Omega} \amatri^n_{\w, \w'}  - \log \sum_{\w'\in\Omega} \amatri^{n-1}_{T_z\w, \w'}\biggr\}\\
&=\lim_{n\to\infty} \biggl\{   \log\rho + \log \Bigl( \sum_{\w'\in\Omega} \rev(\w)\lev(\w') + o(1)\Bigr) 
-  \log \Bigl( \sum_{\w'\in\Omega} \rev(T_z\w)\lev(\w') + o(1)\Bigr)\biggr\} \\
&=
\log\rho + \log   \rev(\w) -  \log   \rev(T_z\w) .  
\end{align*}

  If we assume that all admissible paths between two given points   have the same number of 
steps, then  $ \Bpl^h(\w,0,z)$ extends to a stationary $L^1$ cocycle,  as showed  in
Theorem \ref{th:Bus=grad(b)}.   Then this  situation fits the   development  of Sections \ref{sec:corr}--\ref{sec:bus}.   Equation \eqref{rev} shows that cocycle 
\be\label{pf:B5} \wt B(\w,0,z)=\Bpl^h(\w,0,z)-h\cdot z\ee
 is adapted to 
 $\Vw$, illustrating  Theorem \ref{th:Bus=grad(b)}.   Definition \eqref{EB} applied to   the explicit formulas above gives  
\[   h(\wt B)\cdot z=-\E[\wt B(\w,0,z)]=   -\log \rho+ h\cdot z
\qquad \text{ for each $z\in\range$.  }  \]  
Consequently 
$h(\Bpl^h)\perp\aff\range$.    By Theorem \ref{thm:minimizer}  the cocycle 
\begin{align}\label{pf:F5} 
\begin{split}
\wt F(\w, 0,z)&=-h(\wt B)\cdot z - \wt B(\w,0,z)=\log \rho-\Bpl^h(\w, 0,z)\\
&=\log\rev(T_z\w)-\log\rev(\w), 
\end{split}
\end{align} 
that appeared in \eqref{pf:F},   is the minimizer in \eqref{pf:var-h} for any tilt $ h'$ such that 
$(h'-h(\wt B))\cdot z=   (h'-h)\cdot z +\log \rho $ is constant over $z\in\range$.  

Connection \eqref{pf:gpl} between the  limiting free energy and the Perron-Frobenius eigenvalue is standard fare in textbook treatments of the large deviation theory of finite Markov chains
\cite{demb-zeit, stro-ldp-84, Ras-Sep-15-ldp}.  
 
\subsection{Point-to-level last-passage case}

The {\sl max-plus algebra}  is the semiring $\R_{\text{max}}$ $=$ $\R\cup\{-\infty\}$ under the 
operations 
$ x\oplus y=x\vee y $  and   $x\otimes y=x+y$. 
 Define an irreducible  $\R_{\text{max}}$-valued matrix   by  
\be\label{pf-R2}   \bmatri(\w, \w')= \begin{cases}  \ddd V_0(\w)+\max_{z:T_z\w=\w'} h\cdot z,   &\w'\in\{T_z\w: z\in\range\} \\
-\infty, &\w'\notin\{T_z\w: z\in\range\} .\end{cases}   \ee
 As an irreducible  matrix $A$ has a unique  finite max-plus  eigenvalue $\lambda$ together with a (not necessarily  unique even up to an additive constant) finite eigenvector $\sigma$ that satisfy  
\be\label{pf:max+4}   \max_{\w'\in\Omega}\, [ A(\w,\w')+\sigma(\w')] = \lambda + \sigma(\w), \qquad \w\in\Omega. \ee  
Inductively 
\be\label{pf:max+5}   \max_{\w=\w_0,\,\w_1,\dotsc,\,\w_n} \Bigl\{  \; \sum_{k=0}^{n-1}  A(\w_k,\w_{k+1})  +\sigma(\w_n) \Bigr\} =n \lambda + \sigma(\w), \qquad \w\in\Omega. \ee 

 The last-passage value from \eqref{p2lh} can be expressed as   
\begin{align} 
\label{pf:max+5.5}   \Gpl_{0,(n)}^\infty(h)&=  
 \max_{x_{0,n}} \sum_{k=0}^{n-1}\bigl( \Vw(T_{x_k}\w) + h\cdot (x_{k+1}-x_k) \bigr)    \\
\nn  &=\max_{\w=\w_0,\,\w_1,\dotsc,\,\w_n}   \sum_{k=0}^{n-1} A(\w_k,\w_{k+1}) . 
\end{align} 
Dividing through \eqref{pf:max+5} by $n$ gives the limit 
\begin{align*}
\gpl^\infty(h)=  \lim_{n\to\infty} n^{-1}\Gpl_{0,(n)}^\infty(h)   = \lambda.  
\end{align*}
The eigenvalue equation \eqref{pf:max+4} now  rewrites as 
\be\label{pf:max+6}  
\gpl^\infty(h)=  \max_{z\in\range} \{\Vw(\w)+ h\cdot z+ \sigma(T_z\w)-\sigma(\w) \}. 
\ee
This  is the cocycle variational formula \eqref{eq:g:K-var} (without the supremum over $\w$) and shows  that a corrector  is given by the gradient 
\be\label{pf:F13} F(\w,0,z)= \sigma(T_z\w)-\sigma(\w). \ee 
Compare \eqref{pf:max+6} to \eqref{eq:Kvar:min}. 

The measure variational formula \eqref{eq:var9.1} links with an alternative characterization of 
the max-plus eigenvalue   as the maximal average weight of an elementary circuit. To describe this,  consider the  directed graph $(\Omega, \cE)$ 
  with vertex set $\Omega$ 
and  edges $\cE=\{(\w,T_z\w):\w\in\Omega, z\in\range\}$.    
This allows multiple edges from $\w$ to $\w'$  and   loops from $\w$ to itself.
Loops happen in particular if  $0\in\range$.  Identify edge $(\w,T_z\w)$  with the pair $(\w, z)$.  
An {\sl elementary circuit}  of length $N$   is a sequence of edges $(\w_0, z_1), (\w_1, z_2), \dotsc, (\w_{N-1}, z_N)$ such that $\w_i=T_{z_i}\w_{i-1}$ with  $\w_N=\w_0$, but  $\w_i\ne \w_j$ for $0\le i<j<N$.  

Given any fixed $\w$,   all elementary circuits can be represented as  admissible paths   $x_0, x_1$, $\dotsc$, $x_N$ in $\gr$ by choosing $x_0$ so that  $\w_0=T_{x_0}\w$ and $x_i=x_{i-1}+z_i$ for $1\le i\le N$.    Conversely, an admissible path   $x_0, x_1, \dotsc, x_N$ in $\gr$ represents an  elementary circuit if $T_{x_0}\w, T_{x_1}\w, \dotsc, T_{x_{N-1}}\w$ are distinct,   but $T_{x_0}\w=T_{x_N}\w$.   Let $\cC$ denote  the set of elementary circuits. 
 The   average weight  formula for the eigenvalue  is (Thm.~2.9 in \cite{heid-olds-woud})
\be\label{pf:aa} 
\lambda= \max_{N\in\N, \, x_{0,N}\in\cC}   N^{-1}\sum_{k=0}^{N-1}
\bigl(  V_0(T_{x_k}\w)+h\cdot z_{k+1}\bigr). 
\ee
 The right-hand side   is independent of $\w$ because switching $\w$ amounts to  
translating the circuit, by the assumption of irreducible action by $\{T_z\}_{z\in\range}$.  

 It is elementary to verify from definitions that  $\gpl^\infty(h)$ equals  the right-hand side of \eqref{pf:aa}.  (The sum on the right-hand side of \eqref{pf:max+5.5}  decomposes into circuits and a bounded part,  while an asymptotically optimal path finds a maximizing circuit and repeats it forever.)    
  If we take    \eqref{pf:aa} as the definition of $\lambda$,  then the identity 
 \be\label{pf:max+9} \lambda = \max\Bigl\{\, \sum_{(\w,z) \in\Omega\times\range}\mu(\w,z)(V_0(\w) +h\cdot z) :  \mu\in\cM_s(\Omega\times\range)\Bigr\}   \ee
follows from the fact that the extreme points of  the convex set  $\cM_s(\Omega\times\range)$ are exactly those  uniform probability measures whose support is a single  elementary circuit. We omit the proof.   Equation \eqref{pf:max+9} is   the measure variational formula \eqref{eq:var9.1} which has now been (re)derived in the finite setting from max-plus theory.  

 As in the finite temperature case, existence of point-to-level Busemann functions follows from asymptotics of matrices.    The {\sl critical graph} of the  max-plus matrix  $A$ is the subgraph of $(\Omega, \cE)$ consisting of those nodes and edges that belong to elementary circuits that maximize in \eqref{pf:aa}.   
 Matrix  $A$  is {\sl primitive} if it is irreducible and if its critical graph has a unique strongly connected component with cyclicity $1$ (that is, a unique  irreducible and aperiodic component  in Markov chain terminology).   
 This implies that the eigenvector is unique up to an additive constant  and these asymptotics hold as $n\to\infty$: 
  \be
  \begin{split}
     \Gpl_{0,(n)}^\infty(h)-  \Gpl_{z,(n-1)}^\infty(h) &= (A^{\otimes n}\otimes\mathbf0)(\om) - (A^{\otimes (n-1)}\otimes\mathbf0)(T_z\om) \\
   	&\phantom{xxxxx}\longrightarrow  \lambda +  \sigma(\w) - \sigma(T_z\w)\equiv \Bpl^h(\w,0,z). \label{eq:per:p2l:bus}
    \end{split}
    \ee
(From \cite{heid-olds-woud} apply  Thm.~3.9 with cyclicity 1 and section 4.3.) Above $\mathbf0 = (0,\ldots,0)^T$ and operations $\otimes$ are in the max-plus sense.  
 Equation \eqref{pf:max+6} shows that cocycle 
$\wt B(\w,0,z)=\Bpl^h(\w,0,z)-h\cdot z$ is adapted to 
$\Vw$,   as an example of Theorem \ref{th:Bus=grad(b)} for $\beta=\infty$.     

\medskip

The next simple example illustrates the max-plus case.  
All the previous results of this paper  identify correctors that solve the variational formulas of Theorem \ref{th:K-var} so that   the essential supremum over $\w$ can be dropped.   This  example shows that there can be additional minimizing cocycles $F$ for which the function of $\w$  on the right in \eqref{eq:g:K-var} is not constant in $\w$. 

\medskip

\begin{figure}[h]  
\begin{center}
\begin{tikzpicture}[>=latex,yscale=0.6,xscale=0.8]
	\foreach \x in {0,2,4,6}
			{
				\foreach \y in {0,...,3}	
					{
						\draw(\x,0)--(\x,4);
						\draw(\x+1,0)--(\x+1,4);
						\draw(0,\y)--(8,\y);
						\draw (\x +.5,\y+.5) node{\Large{$1$}};
						\draw(\x+1.5, \y+.5) node{\Large{$0$}};
						\draw(3.5, \y+.5) node{\Large{$0$}};
					}
				 
			}
	\draw[color=nicosred, line width=4pt](4,1)rectangle(5,2);
	\fill[color=orange, nearly transparent](4,1)rectangle(5,2);
	\draw[color=white, line width=2pt](0,0)--(0,4);
	\draw[color=white, line width=2pt](0,0)--(8,0);
	
\end{tikzpicture}
\end{center}
\caption{Environment configuration $\w^{(1)}$ indexed by $\Z^2$ in Example \ref{ex:stripes1}. The origin is shaded in a thick frame. }
 \label{fig:tile}
\end{figure}

\begin{example} \label{ex:stripes1}
Take $d=2$ and a two-point environment space $\Omega=\{\w^{(1)}, \w^{(2)}=T_{e_1}\w^{(1)}\}$  where $\w^{(1)}_{i,j}=\tfrac12(1+(-1)^i)$ for  $(i,j)\in\Z^2$ is a vertically striped configuration of 
  zeroes and ones, with a one at the origin (Figure \ref{fig:tile}). Admissible steps are $\range=\{e_1, e_2\}$ and $T_{e_2}$ acts as an identity.     The ergodic measure is 
$\P=\tfrac12(\delta_{\w^{(1)}}+ \delta_{\w^{(2)}})$ and 
the potential    $V_0(\w)=\w_0$ with tilts $h=(h_1, h_2)\in\R^2$.    

 Matrix $A(\w^{(i)}, \w^{(j)})$ of \eqref{pf-R2} is 
		\[ A= 
		\begin{bmatrix}
			1+h_2		&1+ h_1\\
			h_1			& h_2\\
		\end{bmatrix}
		\]
and  the  directed graph $(\Omega, \cE)$ 	is in Figure \ref{pf:gr-fig}.

	\begin{figure}[h]  
	\begin{center}
	\begin{tikzpicture}[>=latex, yscale = 0.7]
	\draw (0,0)node{$\om^{(2)}$};
	\draw[color=nicosred, line width= 2pt] (0,0)circle(.5cm);
	\draw (5,0)node{$\om^{(1)}$};
	\draw [color=nicosred, line width= 2pt](5,0)circle(.5cm);
	
	\draw[<-] (0.4,.5)..controls(1.5,1.5) and (3.5,1.5)..(4.6,.5);
	\draw[->] (0.4,-.5)..controls(1.5,-1.5) and (3.5,-1.5)..(4.6,-.5);
	
	\draw[->](-0.6,0.2)..controls (-1.7,1.3) and (-1.7,-1.3)..(-0.6,-0.2); 
	\draw[->](5.6,0.2)..controls (6.7,1.3) and (6.7,-1.3)..(5.6,-0.2); 
	
	\draw(2.5, 1.2)node[above]{$1+h_1$};
	\draw(2.5, -1.2)node[below]{$h_1$};
	\draw(-1.5, 0)node[left]{$h_2$};
	\draw(6.5, 0)node[right]{$1+h_2$};
	
	\end{tikzpicture}	 \end{center}  
	\caption{Graph $(\Omega, \cE)$  for Example \ref{ex:stripes1}.} 	
	 \label{pf:gr-fig}
	\end{figure}

Since $A$  is irreducible   its unique max-plus 
	eigenvalue is  the maximum average value of   elementary circuits and this gives the point-to-line last-passage limit: 
		\be
		\label{eq:lambda:R}
			\gpl^{\infty}(h)= \lambda = \max\{  \tfrac12 +h_1,  1+h_2 \}.
		\ee
	There are two cases to consider, and in both cases there is a unique  eigenvector (up to an additive constant)  $\sigma=(\sigma(\w^{(1)}), \sigma(\w^{(2)}))$:
		\begin{enumerate}
		\item[(i)] $ \tfrac12+ h_1  \le 1+ h_2= \lambda$, $\sigma=(1, h_1-h_2)$,  the critical graph has cyclicity 1. 
			\item[(ii)] $1+h_2 < \tfrac12+ h_1  = \lambda$, $\sigma=(1, \tfrac12)$,  the critical graph has cyclicity 2.
		\end{enumerate}

\smallskip

{\sl Case {\rm(}i{\rm)}.}  One can verify by hand that  variational formula \eqref{eq:g:K-var} is minimized by the   cocycles 
\be\label{se:F9}   F(\w^{(1)}, 0, e_1)=a=-F(\w^{(2)}, 0, e_1), \quad
F(\w^{(1)}, 0, e_2)=F(\w^{(2)}, 0, e_2)=0
\ee
for $a\in[h_1-h_2-1, h_2-h_1]$.  Let $\wt F$ denote  the cocycle for $a=h_1-h_2-1$ which is the one consistent with \eqref{pf:F13} for the eigenvector $\sigma$.     Among the minimizing  cocycles  only $\wt F$  satisfies \eqref{eq:g:K-var} without $\max_\w$,  that is, in the form \eqref{eq:Kvar:min}.  And indeed this corrector comes from Theorem \ref{thm:minimizer}(ii-b).  $\wt F$ is given by  equation \eqref{FF} with  a cocycle $\wt B$ that is adapted to 
$\Vw$ (as defined in \eqref{VB}) if and only if  $1+h_2\ge \tfrac12+h_1$.   In case (i)  matrix  $A$ is primitive  and  limit \eqref{eq:per:p2l:bus}  gives an explicit  Busemann function $ \Bpl^h(\w,0,z)$.  From this Busemann function   \eqref{pf:B5}  gives cocycle $\wt B$.    

\smallskip 

{\sl Case {\rm(}ii{\rm)}.}  In this case there is a unique minimizing corrector $\check F$ which is \eqref{se:F9} with $a=-1/2$, the one that satisfies \eqref{pf:F13} for the eigenvector $\sigma$.  $\check F$ comes   via  equation \eqref{FF} from a cocycle  that is adapted to 
$\Vw$  if and only if  $ \tfrac12+h_1\ge 1+h_2$.  So the variational formula  \eqref{eq:g:K-var} is again satisfied without $\max_\w$.  
However, this time $\check F$ cannot come from  Busemann functions because 
 some Busemann functions do not exist.  Maximizing $n$-step paths use only $e_1$-steps and consequently 
\[  
 \Gpl_{0,(n)}^\infty(h)-  \Gpl_{e_2,(n-1)}^\infty(h) = h_1 + \one\{\text{$n$ is odd}\}  \]
does not converge as $n\to\infty$.  

Note that $\check F$ is a minimizing cocycle in both cases (i) and (ii), but only in case (ii)   it  
 satisfies \eqref{eq:g:K-var} without $\max_\w$.   
   \hfill$\triangle$
  \end{example}

\appendix

 \section{Auxiliary lemmas}\label{sec:cK}

Centered cocycles satisfy a uniform ergodic theorem. The following is a special case of Theorem 9.3 of  \cite{geor-rass-sepp-yilm-15}. Note that a one-sided bound suffices for a hypothesis.   Recall Definition \ref{def:cL} for class $\cL$ and Definition \ref{def:cK} for the space $\cK_0$ of centered cocycles.   

\begin{theorem}\label{th:Atilla}
Assume $\P$ is ergodic under the transformations  $\{T_z:z\in\range\}$. 
Let $F\in\cK_0$.   Assume there exists  $V\in\cL$ such that $\max_{z\in\range} F(\w,0,z)\le V(\w)$ for $\P$-a.e.\ $\w$.  
Then for $\P$-a.e.\ $\w$
\[\lim_{n\to\infty}\;\max_{\substack{x=z_1+\dotsm+z_n\\z_{1,n}\in\range^n}} \;\frac{\abs{F(\w,0,x)}}n=0.\]
\end{theorem}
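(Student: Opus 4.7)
Plan:

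First, I would exploit the cocycle property: since $F$ is additive, $F(\w,0,x)$ depends only on the endpoint $x$, not on the admissible path from $0$ to $x$. Hence the maximum in the theorem equals $\max_{x\in E_n} |F(\w,0,x)|$, where
\[
E_n \;=\; \Bigl\{\sum_{z\in\range} n_z z \,:\, n_z\in\Z_+,\ \sum_z n_z=n\Bigr\}\;\subset\;\{x\in\gr:|x|\le Cn\}, \quad C=\max_{z\in\range}|z|.
\]
Fixing an enumeration $\range=\{z^{(1)},\dots,z^{(r)}\}$ and setting, for $x=\sum_j n_j z^{(j)}$, $y^{(0)}=0$ and $y^{(j)}=\sum_{i\le j} n_i z^{(i)}$, additivity and stationarity give
\[
F(\w,0,x) \;=\; \sum_{j=1}^{r} F\bigl(T_{y^{(j-1)}}\w,\,0,\,n_j z^{(j)}\bigr), \qquad |y^{(j)}|\le Cn.
\]
Since $r=|\range|$ is a fixed constant, the theorem reduces to showing, for each $z\in\range$,
\[
\frac1n\,\max_{\substack{y\in\gr\\|y|\le Cn}}\;\max_{0\le m\le n}\;|F(T_y\w,0,mz)|\;\xrightarrow[n\to\infty]{}\;0\quad\P\text{-a.s.}
\]

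To prove this reduced statement, write $g(\w)=F(\w,0,z)$, so that $F(T_y\w,0,mz)=\sum_{k=0}^{m-1}g(T_{y+kz}\w)$, with $g\in L^1(\P)$, $\E g=0$, and $g^+\le V\in\cL$. A preliminary observation: the cocycle identity yields $F(T_y\w,0,mz)=F(\w,0,mz)+F(T_{mz}\w,0,y)-F(\w,0,y)$, and since $F(T_{mz}\w,0,y)$ has $\P$-distribution equal to $F(\w,0,y)\in L^1(\P)$, a Borel--Cantelli estimate gives $m^{-1}F(T_{mz}\w,0,y)\to 0$ a.s. Hence the Birkhoff limit of $m^{-1}F(T_y\w,0,mz)$ is the same for every $y$; it is a $\{T_x\}_{x\in\gr}$-invariant function and so, by ergodicity of $\P$, constant and equal to $\E g = 0$. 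Now fix $\delta>0$ and $\e\in(0,1)$, splitting $m$ into a prefix range $m\le\lceil\e n\rceil$ and a bulk range $\lceil\e n\rceil<m\le n$. The prefix is controlled by the $\cL$-hypothesis on $V$:
\[
\varlimsup_{\e\searrow0}\;\varlimsup_{n\to\infty}\;\max_{|y|\le Cn}\frac1n\sum_{k=0}^{\lceil\e n\rceil} g^+(T_{y+kz}\w)\;=\;0,
\]
with a matching bound on $g^-=g^+-g$ derived by combining the display above with the pointwise Birkhoff estimate for $F(T_y\w,0,\lceil\e n\rceil z)$. The bulk is handled by the pointwise sublinearity $m^{-1}F(T_y\w,0,mz)\to 0$ together with a covering argument over the polynomially-sized set $\gr\cap\{|y|\le Cn\}$, with the $\cL$-modulus supplying the uniformity needed to control the countable maximum.

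The main obstacle is the uniform-in-$y$ prefix bound on the negative part $g^-$, since the hypothesis provides $\cL$-domination only for $g^+$ and no analogous bound is given for $g^-$ (the vector $-z$ need not lie in $\range$, so one cannot repeat the same argument). Transferring the estimate from $g^+$ to $g^-$ must go through the centering $\E g=0$, the identity $g^-=g^+-g$, and a uniform-in-$y$ application of Birkhoff's theorem to the cocycle $F(T_y\w,0,mz)$ itself. Making these pieces fit together consistently—so that the $\cL$-modulus for $g^+$ and the pointwise ergodic limits for the centered cocycle cooperate on the growing set $\gr\cap\{|y|\le Cn\}$—is the technical heart of the proof.
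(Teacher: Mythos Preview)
The paper does not prove this theorem: it is quoted as a special case of Theorem~9.3 of \cite{geor-rass-sepp-yilm-lg}, so there is no in-paper argument to compare against directly.

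Your reduction via additivity to the single-direction statement
\[
\frac1n\,\max_{\substack{y\in\gr,\,|y|\le Cn}}\;\max_{0\le m\le n}\;\bigl|F(T_y\w,0,mz)\bigr|\;\longrightarrow\;0
\]
is correct and a natural first step. The gap is in what follows. Your control of the prefix's negative part uses $g^-=g^+-g$ and therefore requires a uniform-in-$y$ bound on $n^{-1}\bigl|F(T_y\w,0,\lceil\e n\rceil z)\bigr|$ for $|y|\le Cn$; but that is an instance of the very statement you are proving, so the reasoning is circular. The ``pointwise Birkhoff estimate'' you invoke is valid for each fixed $y$ but carries no uniformity over the $O(n^d)$ starting points that appear as $n$ grows.

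The same problem hits the bulk range. The ``covering argument over the polynomially-sized set'' is left unspecified, and specifying it is exactly the difficulty. Pointwise a.s.\ convergence $m^{-1}F(T_y\w,0,mz)\to0$ does not upgrade for free to a uniform statement over a set of $y$'s of cardinality $\asymp n^d$, since the ergodic theorem gives no quantitative rate. The $\cL$-hypothesis supplies a genuinely uniform modulus, but only one-sided; you correctly flag transferring it to $g^-$ as the crux, yet your sketch does not close that loop. In short, the outline is reasonable, but the ``technical heart'' you point to at the end is not a matter of bookkeeping---it is where a real idea is needed, and the proposal does not yet supply one. (A hint toward what does work: the paper records, right after Definition~\ref{def:cK}, that $\cK_0$ is the $L^1(\P)$-closure of gradients $\varphi\circ T_y-\varphi\circ T_x$; combining this approximation with a maximal-type estimate is one route to the missing uniformity.)
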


\begin{lemma}\label{app:lm1} 
Let  $X_n\in L^1$, $X_n\to X$ a.s.,     $\ddd\varliminf_{n\to\infty}   EX_n\le c<\infty$, and $X_n^-$ uniformly integrable. Then $ X\in L^1$ and $EX\le c$. 
\end{lemma}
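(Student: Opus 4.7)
The plan is to handle the positive and negative parts of $X_n$ separately, using uniform integrability on the negative side and Fatou's lemma on the positive side, after passing to a subsequence that realizes the liminf.

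First I would dispatch the negative part. Uniform integrability of $\{X_n^-\}$ combined with the a.s.\ convergence $X_n^- \to X^-$ yields, by Vitali's convergence theorem, that $X^- \in L^1$ and $E[X_n^-] \to E[X^-]$. In particular, $M := \sup_n E[X_n^-] < \infty$.

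Next I would choose a subsequence $n_k$ along which $E[X_{n_k}] \to L := \varliminf_{n\to\infty} E[X_n]$. By hypothesis $L \le c$, and on the other hand $E[X_n] = E[X_n^+] - E[X_n^-] \ge -M$ for all $n$, so $L \ge -M > -\infty$; thus $L$ is a finite number. Along this subsequence, $E[X_{n_k}^+] = E[X_{n_k}] + E[X_{n_k}^-]$ converges to $L + E[X^-]$, which is finite.

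Finally I would apply Fatou's lemma to the nonnegative sequence $X_{n_k}^+ \to X^+$ a.s.\ to get
\[
E[X^+] \;\le\; \varliminf_{k\to\infty} E[X_{n_k}^+] \;=\; L + E[X^-].
\]
Since $E[X^-] < \infty$, this forces $E[X^+] < \infty$, so $X \in L^1$, and rearranging gives $E[X] = E[X^+] - E[X^-] \le L \le c$, as required.

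There is no serious obstacle here; the proof is routine. The only subtle point worth flagging is that one must rule out $L = -\infty$ before subtracting $E[X^-]$ from $E[X_{n_k}^+]$, and this is precisely what the uniform bound $\sup_n E[X_n^-] < \infty$ (a consequence of uniform integrability) delivers.
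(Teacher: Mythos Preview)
Your proof is correct and follows essentially the same approach as the paper: control $X^-$ via uniform integrability (Vitali), then bound $E[X^+]$ by Fatou's lemma applied to $X_n^+$, and combine. The only cosmetic difference is that the paper works with the full sequence, using the identity $\varliminf_n(E X_n + E X_n^-) = \varliminf_n E X_n + \lim_n E X_n^-$ (valid since $E X_n^-$ converges), whereas you make this step explicit by passing to a subsequence realizing the liminf; both routes arrive at $E[X^+]\le c + E[X^-]$.
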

\begin{proof}
   Since $X_n^-\to X^-$ a.s. and $X_n^-$  is uniformly integrable,   $X_n^-\to X^-$ in $L^1$ and in particular $X^-\in L^1$.   By Fatou's lemma and by the assumption,  
\begin{align*}
E(X^+)&=E(\lim_{n\to\infty} X_n^+) \le  \varliminf_{n\to\infty}  E(X_n^+) =   \varliminf_{n\to\infty}  E(X_n + X_n^-) 
\le c + E(X^-) < \infty  
 \end{align*}
 from which we conclude that $X\in L^1$ and then $EX\le c$. \hfill\hfill\qed
\end{proof}


%

\footnotesize

\bibliographystyle{plain}

\bibliography{firasbib2010,growthrefs}

\begin{thebibliography}{10}

\bibitem{aldo-diac95}
David Aldous and Persi Diaconis.
\newblock Hammersley's interacting particle process and longest increasing
  subsequences.
\newblock {\em Probab. Theory Related Fields}, 103(2):199--213, 1995.

\bibitem{Arm-Sou-12}
Scott~N. Armstrong and Panagiotis~E. Souganidis.
\newblock Stochastic homogenization of {H}amilton-{J}acobi and degenerate
  {B}ellman equations in unbounded environments.
\newblock {\em J. Math. Pures Appl. (9)}, 97(5):460--504, 2012.

\bibitem{auff-damr-13}
Antonio Auffinger and Michael Damron.
\newblock Differentiability at the edge of the percolation cone and related
  results in first-passage percolation.
\newblock {\em Probab. Theory Related Fields}, 156(1-2):193--227, 2013.

\bibitem{bacc-cohe-etal-book}
Fran{\c{c}}ois~Louis Baccelli, Guy Cohen, Geert~Jan Olsder, and Jean-Pierre
  Quadrat.
\newblock {\em Synchronization and linearity}.
\newblock Wiley Series in Probability and Mathematical Statistics: Probability
  and Mathematical Statistics. John Wiley \& Sons Ltd., Chichester, 1992.

\bibitem{baik-deif-joha-99}
Jinho Baik, Percy Deift, and Kurt Johansson.
\newblock On the distribution of the length of the longest increasing
  subsequence of random permutations.
\newblock {\em J. Amer. Math. Soc.}, 12(4):1119--1178, 1999.

\bibitem{bakh-cato-khan-14}
Yuri Bakhtin, Eric Cator, and Konstantin Khanin.
\newblock Space-time stationary solutions for the {B}urgers equation.
\newblock {\em J. Amer. Math. Soc.}, 27(1):193--238, 2014.

\bibitem{berm-plem-book}
Abraham Berman and Robert~J. Plemmons.
\newblock {\em Nonnegative matrices in the mathematical sciences}, volume~9 of
  {\em Classics in Applied Mathematics}.
\newblock Society for Industrial and Applied Mathematics (SIAM), Philadelphia,
  PA, 1994.
\newblock Revised reprint of the 1979 original.

\bibitem{carm-hu-02}
Philippe Carmona and Yueyun Hu.
\newblock On the partition function of a directed polymer in a {G}aussian
  random environment.
\newblock {\em Probab. Theory Related Fields}, 124(3):431--457, 2002.

\bibitem{Cat-Pim-11}
Eric Cator and Leandro P.~R. Pimentel.
\newblock A shape theorem and semi-infinite geodesics for the {H}ammersley
  model with random weights.
\newblock {\em ALEA Lat. Am. J. Probab. Math. Stat.}, 8:163--175, 2011.

\bibitem{Cat-Pim-12}
Eric Cator and Leandro P.~R. Pimentel.
\newblock Busemann functions and equilibrium measures in last passage
  percolation models.
\newblock {\em Probab. Theory Related Fields}, 154(1-2):89--125, 2012.

\bibitem{cato-pime-13}
Eric Cator and Leandro P.~R. Pimentel.
\newblock Busemann functions and the speed of a second class particle in the
  rarefaction fan.
\newblock {\em Ann. Probab.}, 41(4):2401--2425, 2013.

\bibitem{cohn-elki-prop-96}
Henry Cohn, Noam Elkies, and James Propp.
\newblock Local statistics for random domino tilings of the {A}ztec diamond.
\newblock {\em Duke Math. J.}, 85(1):117--166, 1996.

\bibitem{come-shig-yosh-03}
Francis Comets, Tokuzo Shiga, and Nobuo Yoshida.
\newblock Directed polymers in a random environment: path localization and
  strong disorder.
\newblock {\em Bernoulli}, 9(4):705--723, 2003.

\bibitem{come-shig-yosh-04}
Francis Comets, Tokuzo Shiga, and Nobuo Yoshida.
\newblock Probabilistic analysis of directed polymers in a random environment:
  a review.
\newblock In {\em Stochastic analysis on large scale interacting systems},
  volume~39 of {\em Adv. Stud. Pure Math.}, pages 115--142. Math. Soc. Japan,
  Tokyo, 2004.

\bibitem{come-yosh-aop-06}
Francis Comets and Nobuo Yoshida.
\newblock Directed polymers in random environment are diffusive at weak
  disorder.
\newblock {\em Ann. Probab.}, 34(5):1746--1770, 2006.

\bibitem{corw-rev}
Ivan Corwin.
\newblock The {K}ardar-{P}arisi-{Z}hang equation and universality class.
\newblock {\em Random Matrices Theory Appl.}, 1(1):1130001, 76, 2012.

\bibitem{cox-durr-81}
J.~Theodore Cox and Richard Durrett.
\newblock Some limit theorems for percolation processes with necessary and
  sufficient conditions.
\newblock {\em Ann. Probab.}, 9(4):583--603, 1981.

\bibitem{Dam-Han-14}
Michael Damron and Jack Hanson.
\newblock Busemann functions and infinite geodesics in two-dimensional
  first-passage percolation.
\newblock {\em Comm. Math. Phys.}, 325(3):917--963, 2014.

\bibitem{demb-zeit}
Amir Dembo and Ofer Zeitouni.
\newblock {\em Large deviations techniques and applications}, volume~38 of {\em
  Applications of Mathematics}.
\newblock Springer-Verlag, New York, second edition, 1998.

\bibitem{denholl-polymer}
Frank den Hollander.
\newblock {\em Random polymers}, volume 1974 of {\em Lecture Notes in
  Mathematics}.
\newblock Springer-Verlag, Berlin, 2009.
\newblock Lectures from the 37th Probability Summer School held in Saint-Flour,
  2007.

\bibitem{Don-Var-76}
M.~D. Donsker and S.~R.~S. Varadhan.
\newblock Asymptotic evaluation of certain {M}arkov process expectations for
  large time. {III}.
\newblock {\em Comm. Pure Appl. Math.}, 29(4):389--461, 1976.

\bibitem{durr-ligg-81}
Richard Durrett and Thomas~M. Liggett.
\newblock The shape of the limit set in {R}ichardson's growth model.
\newblock {\em Ann. Probab.}, 9(2):186--193, 1981.

\bibitem{ferr-mart-pime-09}
Pablo~A. Ferrari, James~B. Martin, and Leandro P.~R. Pimentel.
\newblock A phase transition for competition interfaces.
\newblock {\em Ann. Appl. Probab.}, 19(1):281--317, 2009.

\bibitem{ferr-pime-05}
Pablo~A. Ferrari and Leandro P.~R. Pimentel.
\newblock Competition interfaces and second class particles.
\newblock {\em Ann. Probab.}, 33(4):1235--1254, 2005.

\bibitem{Gand-Kest-94}
Alberto Gandolfi and Harry Kesten.
\newblock Greedy lattice animals. {II}. {L}inear growth.
\newblock {\em Ann. Appl. Probab.}, 4(1):76--107, 1994.

\bibitem{geor-rass-sepp-lppgeo}
Nicos Georgiou, Firas Rassoul-Agha, and Timo Sepp{\"a}l{\"a}inen.
\newblock Geodesics and the competition interface for the corner growth model.
\newblock {\em {\tt arXiv:1510.00860}}, 2015.

\bibitem{geor-rass-sepp-lppbuse}
Nicos Georgiou, Firas Rassoul-Agha, and Timo Sepp{\"a}l{\"a}inen.
\newblock Stationary cocycles and {B}usemann functions for the corner growth
  model.
\newblock {\em {\tt arXiv:1510.00859}}, 2015.

\bibitem{geor-rass-sepp-yilm-15}
Nicos Georgiou, Firas Rassoul-Agha, Timo Sepp{\"a}l{\"a}inen, and Atilla
  Yilmaz.
\newblock Ratios of partition functions for the log-gamma polymer.
\newblock {\em Ann. Probab.}, 43(5):2282--2331, 2015.

\bibitem{geor-sepp-13}
Nicos Georgiou and Timo Sepp\"{a}l\"{a}inen.
\newblock Large deviation rate functions for the partition function in a
  log-gamma distributed random potential.
\newblock {\em Ann. Probab.}, 41(6):4248--4286, 2013.

\bibitem{hamm}
John~M. Hammersley.
\newblock A few seedlings of research.
\newblock In {\em Proceedings of the Sixth Berkeley Symposium on Mathematical
  Statistics and Probability (Univ. California, Berkeley, Calif., 1970/1971),
  Vol. I: Theory of statistics}, pages 345--394, Berkeley, Calif., 1972. Univ.
  California Press.

\bibitem{heid-olds-woud}
Bernd Heidergott, Geert~Jan Oldser, and Jacob van~der Woude.
\newblock {\em Max {P}lus at {W}ork}.
\newblock Princeton Series in Applied Mathematics. Princeton University Press,
  Princeton, NJ, 2006.
\newblock Modeling and analysis of synchronized systems: a course on max-plus
  algebra and its applications.

\bibitem{hoff-05}
Christopher Hoffman.
\newblock Coexistence for {R}ichardson type competing spatial growth models.
\newblock {\em Ann. Appl. Probab.}, 15(1B):739--747, 2005.

\bibitem{hoff-08}
Christopher Hoffman.
\newblock Geodesics in first passage percolation.
\newblock {\em Ann. Appl. Probab.}, 18(5):1944--1969, 2008.

\bibitem{How-New-01}
C.~Douglas Howard and Charles~M. Newman.
\newblock Geodesics and spanning trees for {E}uclidean first-passage
  percolation.
\newblock {\em Ann. Probab.}, 29(2):577--623, 2001.

\bibitem{jock-prop-shor-98}
William Jockusch, James Propp, and Peter Shor.
\newblock Random domino tilings and the arctic circle theorem.
\newblock {\em {\tt arXiv:math/9801068}}.

\bibitem{joha}
Kurt Johansson.
\newblock Shape fluctuations and random matrices.
\newblock {\em Comm. Math. Phys.}, 209(2):437--476, 2000.

\bibitem{Ken-09}
Richard Kenyon.
\newblock Lectures on dimers.
\newblock In {\em Statistical mechanics}, volume~16 of {\em IAS/Park City Math.
  Ser.}, pages 191--230. Amer. Math. Soc., Providence, RI, 2009.

\bibitem{Kos-07}
Elena Kosygina.
\newblock Homogenization of stochastic {H}amilton-{J}acobi equations: brief
  review of methods and applications.
\newblock In {\em Stochastic analysis and partial differential equations},
  volume 429 of {\em Contemp. Math.}, pages 189--204. Amer. Math. Soc.,
  Providence, RI, 2007.

\bibitem{Kos-Rez-Var-06}
Elena Kosygina, Fraydoun Rezakhanlou, and S.~R.~S. Varadhan.
\newblock Stochastic homogenization of {H}amilton-{J}acobi-{B}ellman equations.
\newblock {\em Comm. Pure Appl. Math.}, 59(10):1489--1521, 2006.

\bibitem{Kos-Var-08}
Elena Kosygina and S.~R.~S. Varadhan.
\newblock Homogenization of {H}amilton-{J}acobi-{B}ellman equations with
  respect to time-space shifts in a stationary ergodic medium.
\newblock {\em Comm. Pure Appl. Math.}, 61(6):816--847, 2008.

\bibitem{Kri-14}
Arjun Krishnan.
\newblock {\em Variational formula for the time-constant of first-passage
  percolation}.
\newblock ProQuest LLC, Ann Arbor, MI, 2014.
\newblock Thesis (Ph.D.)--New York University.

\bibitem{Kri-15-}
Arjun Krishnan.
\newblock Variational formula for the time-constant of first-passage
  percolation.
\newblock {\em Comm. Pure Appl. Math.}, 2016.
\newblock To appear ({\tt arXiv:1311.0316}).

\bibitem{laco-10}
Hubert Lacoin.
\newblock New bounds for the free energy of directed polymers in dimension
  {$1+1$} and {$1+2$}.
\newblock {\em Comm. Math. Phys.}, 294(2):471--503, 2010.

\bibitem{Lic-New-96}
Cristina Licea and Charles~M. Newman.
\newblock Geodesics in two-dimensional first-passage percolation.
\newblock {\em Ann. Probab.}, 24(1):399--410, 1996.

\bibitem{Lio-Sou-05}
Pierre-Louis Lions and Panagiotis~E. Souganidis.
\newblock Homogenization of ``viscous'' {H}amilton-{J}acobi equations in
  stationary ergodic media.
\newblock {\em Comm. Partial Differential Equations}, 30(1-3):335--375, 2005.

\bibitem{Mar-02}
R.~Marchand.
\newblock Strict inequalities for the time constant in first passage
  percolation.
\newblock {\em Ann. Appl. Probab.}, 12(3):1001--1038, 2002.

\bibitem{mart-04}
James~B. Martin.
\newblock Limiting shape for directed percolation models.
\newblock {\em Ann. Probab.}, 32(4):2908--2937, 2004.

\bibitem{more-10}
Gregorio Moreno.
\newblock Convergence of the law of the environment seen by the particle for
  directed polymers in random media in the {$L\sp 2$} region.
\newblock {\em J. Theoret. Probab.}, 23(2):466--477, 2010.

\bibitem{mori-oconn-07}
John Moriarty and Neil O'Connell.
\newblock On the free energy of a directed polymer in a {B}rownian environment.
\newblock {\em Markov Process. Related Fields}, 13(2):251--266, 2007.

\bibitem{New-95}
Charles~M. Newman.
\newblock A surface view of first-passage percolation.
\newblock In {\em Proceedings of the {I}nternational {C}ongress of
  {M}athematicians, {V}ol.\ 1, 2 ({Z}{\"u}rich, 1994)}, pages 1017--1023,
  Basel, 1995. Birkh{\"a}user.

\bibitem{pime-07}
Leandro P.~R. Pimentel.
\newblock Multitype shape theorems for first passage percolation models.
\newblock {\em Adv. in Appl. Probab.}, 39(1):53--76, 2007.

\bibitem{quas-icm}
Jeremy Quastel.
\newblock Weakly asymmetric exclusion and {KPZ}.
\newblock In {\em Proceedings of the {I}nternational {C}ongress of
  {M}athematicians. {V}olume {IV}}, pages 2310--2324, New Delhi, 2010.
  Hindustan Book Agency.

\bibitem{Ras-Sep-11}
Firas Rassoul-Agha and Timo Sepp{{\"a}}l{{\"a}}inen.
\newblock Process-level quenched large deviations for random walk in random
  environment.
\newblock {\em Ann. Inst. Henri Poincar{\'e} Probab. Stat.}, 47(1):214--242,
  2011.

\bibitem{Ras-Sep-12-arxiv}
Firas Rassoul-Agha and Timo Sepp{\"a}l{\"a}inen.
\newblock Quenched point-to-point free energy for random walks in random
  potentials.
\newblock {\em {\tt arXiv:1202.2584}, Version 1}, 2012.

\bibitem{Ras-Sep-14}
Firas Rassoul-Agha and Timo Sepp{{\"a}}l{{\"a}}inen.
\newblock Quenched point-to-point free energy for random walks in random
  potentials.
\newblock {\em Probab. Theory Related Fields}, 158(3-4):711--750, 2014.

\bibitem{Ras-Sep-15-ldp}
Firas Rassoul-Agha and Timo Sepp{{\"a}}l{{\"a}}inen.
\newblock {\em A course on large deviations with an introduction to {G}ibbs
  measures}, volume 162 of {\em Graduate Studies in Mathematics}.
\newblock American Mathematical Society, Providence, RI, 2015.

\bibitem{Ras-Sep-Yil-13}
Firas Rassoul-Agha, Timo Sepp{\"a}l{\"a}inen, and Atilla Y{\i}lmaz.
\newblock Quenched free energy and large deviations for random walks in random
  potentials.
\newblock {\em Comm. Pure Appl. Math.}, 66(2):202--244, 2013.

\bibitem{Ras-Sep-Yil-15-}
Firas Rassoul-Agha, Timo Sepp{\"a}l{\"a}inen, and Atilla Y{\i}lmaz.
\newblock Variational formulas and disorder regimes of random walks in random
  potentials.
\newblock {\em Bernoulli}, 2016.
\newblock To appear ({\tt arXiv:1410.4474}).

\bibitem{rock-ca}
R.~Tyrrell Rockafellar.
\newblock {\em Convex analysis}.
\newblock Princeton Mathematical Series, No. 28. Princeton University Press,
  Princeton, N.J., 1970.

\bibitem{Ros-06}
Jeffrey~M. Rosenbluth.
\newblock {\em Quenched large deviation for multidimensional random walk in
  random environment: {A} variational formula}.
\newblock ProQuest LLC, Ann Arbor, MI, 2006.
\newblock Thesis (Ph.D.)--New York University.

\bibitem{rost}
Hermann Rost.
\newblock Nonequilibrium behaviour of a many particle process: density profile
  and local equilibria.
\newblock {\em Z. Wahrsch. Verw. Gebiete}, 58(1):41--53, 1981.

\bibitem{sene-book}
Eugene Seneta.
\newblock {\em Nonnegative matrices and {M}arkov chains}.
\newblock Springer Series in Statistics. Springer-Verlag, New York, second
  edition, 1981.

\bibitem{Sep-93-ptrf-1}
Timo Sepp{{\"a}}l{{\"a}}inen.
\newblock Large deviations for lattice systems. {I}. {P}arametrized independent
  fields.
\newblock {\em Probab. Theory Related Fields}, 96(2):241--260, 1993.

\bibitem{sepp-96}
Timo Sepp{\"a}l{\"a}inen.
\newblock A microscopic model for the {B}urgers equation and longest increasing
  subsequences.
\newblock {\em Electron. J. Probab.}, 1:no.\ 5, approx.\ 51 pp.\ (electronic),
  1996.

\bibitem{sepp98mprf}
Timo Sepp{\"a}l{\"a}inen.
\newblock Hydrodynamic scaling, convex duality and asymptotic shapes of growth
  models.
\newblock {\em Markov Process. Related Fields}, 4(1):1--26, 1998.

\bibitem{sepp-12-aop}
Timo Sepp{\"a}l{\"a}inen.
\newblock Scaling for a one-dimensional directed polymer with boundary
  conditions.
\newblock {\em Ann. Probab.}, 40(1):19--73, 2012.

\bibitem{spitzer}
Frank Spitzer.
\newblock {\em Principles of random walks}.
\newblock Springer-Verlag, New York, second edition, 1976.
\newblock Graduate Texts in Mathematics, Vol. 34.

\bibitem{spoh-12}
Herbert Spohn.
\newblock Stochastic integrability and the {KPZ} equation.
\newblock {\em {\tt arXiv:1204.2657}}.

\bibitem{stro-ldp-84}
Daniel~W. Stroock.
\newblock {\em An introduction to the theory of large deviations}.
\newblock Universitext. Springer-Verlag, New York, 1984.

\bibitem{trac-wido-02}
Craig~A. Tracy and Harold Widom.
\newblock Distribution functions for largest eigenvalues and their
  applications.
\newblock In {\em Proceedings of the International Congress of Mathematicians,
  Vol. I (Beijing, 2002)}, pages 587--596, Beijing, 2002. Higher Ed. Press.

\bibitem{Var-03-cpam}
S.~R.~S. Varadhan.
\newblock Large deviations for random walks in a random environment.
\newblock {\em Comm. Pure Appl. Math.}, 56(8):1222--1245, 2003.
\newblock Dedicated to the memory of J{{\"u}}rgen K. Moser.

\bibitem{varg-07}
Vincent Vargas.
\newblock Strong localization and macroscopic atoms for directed polymers.
\newblock {\em Probab. Theory Related Fields}, 138(3-4):391--410, 2007.

\bibitem{Zer-98-aop}
Martin P.~W. Zerner.
\newblock Lyapounov exponents and quenched large deviations for
  multidimensional random walk in random environment.
\newblock {\em Ann. Probab.}, 26(4):1446--1476, 1998.

\end{thebibliography}

\end{document}